 \titleformat{\subparagraph}[hang]{\normalfont}{\thesubparagraph}{0pt}{\underline}
\renewcommand{\todo}[2][]{\tikzexternaldisable\@todo[#1]{#2}\tikzexternalenable}
\setlist[enumerate]{itemsep=0pt,parsep=2.0pt plus 1.0 pt minus 0.5pt,ref=\alph*, label={(\alph*)}, itemsep=0em,topsep=4.0pt plus 2.0 pt minus 1.0pt } 
\setlist[itemize]{itemsep=2.0pt plus 1.0 pt minus 0.5pt, topsep=4.0pt plus 2.0 pt minus 1.0pt}
\newcolumntype{M}[1]{>{\centering\arraybackslash}m{#1}}
\DeclareFontShape{U}{wasy}{b}{n}{ <-10> ssub * wasy/m/n
 <10> <10.95> <12> <14.4> <17.28> <20.74> <24.88>wasyb10 }{}
\DeclareFontShape{U}{wasy}{b}{it}{ <-10> ssub * wasy/m/n
 <10> <10.95> <12> <14.4> <17.28> <20.74> <24.88>wasyb10 }{}
\DeclareMathAlphabet\mathbfcal{OMS}{cmsy}{b}{n}
\newcommand\numberthis{\addtocounter{equation}{1}\tag{\theequation}}
\renewcommand{\Re}{\operatorname{Re}}
\renewcommand{\Im}{\operatorname{Im}}
\numberwithin{equation}{section}
\theoremstyle{plain}
\newtheorem{arabictheorem}{Theorem}
\newtheorem*{maintheorem}{Main Theorem}
\newtheorem{theorem}{Theorem}
\newtheorem*{theorem*}{Theorem}
\newtheorem{conjecture}[theorem]{Conjecture}
\newtheorem*{conjecture*}{Conjecture}
\newtheorem*{corollary*}{Corollary}
\newtheorem{proposition}[theorem]{Proposition}
\newtheorem{lemma}[theorem]{Lemma}
\numberwithin{theorem}{section} 
\let\expandafter\oldproof\csname\string\proof\endcsname
\let\oldendproof\endproof
\renewenvironment{proof}[1][\proofname]{%
  \oldproof[\upshape \bfseries #1]%
}{\oldendproof}
\theoremstyle{definition}
\newtheorem{definition}[theorem]{Definition}
\newtheorem{remark}[theorem]{Remark}
\newtheorem{hypotheses}[theorem]{Hypotheses}
\newtheorem*{remark*}{Remark}
\newcommand{\mb}[1]{\mathbb{#1}}
\newcommand{\mc}[1]{\mathcal{#1}}
\newcommand{\mr}[1]{\mathrm{#1}}
\newcommand{\p}{\partial}
\newcommand{\lp}{\left}
\newcommand{\rp}{\right}
\newcommand\supp{\mathrm{supp}}
\def \a{\alpha}
\def \R {\mathbb{R}}
\def \C{\mathbb{C}}
\def \N{\mathbb{N}}
\def \D{\textup{D}}
\def \e{\varepsilon}
\def \d{\,\textup{d}}
\def \n{\nabla}
\def \exc{\backslash}
\def \p{\partial}
\def \mc{\mathcal}
\def \mb{\mathbb}
\def \mf{\mathfrak}
\def \wstar{\overset{\ast}{\rightharpoonup}}
\def \w{\rightharpoonup}
\def \wH{\xrightharpoonup{\textup{H}}}
\def \supp{\textup{supp}\,}
\def \tp{\textup}
\newlength{\dhatheight}
\newcommand\reallywidehat[1]{%
\savestack{\tmpbox}{\stretchto{%
  \scaleto{%
    \scalerel*[\widthof{\ensuremath{#1}}]{\kern-.6pt\bigwedge\kern-.6pt}%
    {\rule[-\textheight/2]{1ex}{\textheight}}
  }{\textheight}%
}{0.5ex}}%
\stackon[1pt]{#1}{\tmpbox}%
}
\newlength{\bibitemsep}\setlength{\bibitemsep}{.2\baselineskip plus .05\baselineskip minus .05\baselineskip}
\newlength{\bibparskip}\setlength{\bibparskip}{0pt}
\let\oldthebibliography\thebibliography
\renewcommand\thebibliography[1]{%
  \oldthebibliography{#1}%
  \setlength{\parskip}{\bibitemsep}%
  \setlength{\itemsep}{\bibparskip}%
}
\begin{document}
 \title{\LARGE \textbf{Oscillations in wave map systems and \\ homogenization of the Einstein equations in symmetry}}


\author[1]{{\large Andr\'e Guerra}}
\author[2]{{\large  Rita \mbox{Teixeira da Costa}\vspace{0.1cm}}}

\affil[1]{\footnotesize University of Oxford, Andrew Wiles Building, Woodstock Rd, Oxford OX2 6GG, UK \protect \\[-1mm]
{\small\tt{andre.guerra@maths.ox.ac.uk}}\vspace{5pt}\ }

\affil[2]{\footnotesize 
University of Cambridge, Center for Mathematical Sciences, Wilberforce Rd, Cambridge CB3 0WA, UK
\protect \\[-1mm]
{\small\tt{rita.t.costa@dpmms.cam.ac.uk}}\
}

\date{}

\maketitle
\vspace*{-.7cm}

\begin{abstract}
In 1989, Burnett conjectured that, under appropriate assumptions, the limit of highly oscillatory solutions to the Einstein vacuum equations is a solution of the Einstein--massless Vlasov system. In a recent breakthrough, Huneau--Luk (arXiv:1907.10743) gave a proof of the conjecture in $U(1)$-symmetry and elliptic gauge. They also require control on up to fourth order derivatives of the metric components. In this paper, we give a streamlined proof of a stronger result and, in the spirit of Burnett's original conjecture, we remove the need for control on higher derivatives. Our methods also apply to general wave map equations.
\end{abstract}


\vspace{0.5cm}
\tableofcontents

\section{Introduction}

%

In General Relativity, spacetime is represented by a 4-dimensional Lorentzian manifold  which solves the Einstein equations with respect to some suitable matter fields, see already \eqref{eq:Einstein-equation-1}. In describing complex gravitational systems, it is often useful to take a coarse-grained view and study \textit{effective models} instead \cite{Ellis1983}. To date, the $\mathrm{\Lambda}$CDM model in cosmology, consisting of an FLRW spacetime with empirically determined parameters, is the most successful effective model for our universe.  However, it is not known how to derive these effective large scale models as \textit{limits} of the Einstein equations at smaller scales, except for very simple toy problems, see e.g.\ \cite{Green2013}. In fact, it is not even clear what the correct notion of \textit{limit} should be \cite{Green2011,Buchert2015}!

In this paper we consider the simpler problem of determining the weak closure of the vacuum Einstein equations, i.e.\ the Einstein equations in the absence of matter.
Fix a manifold $\bm{\mc{M}}$, and consider a sequence $(\bm g_\e)_\e$ of vacuum Lorentzian metrics on $\bm{\mc M}$:
\begin{equation}
\mathbf{Ric}(\bm g_\e)=0\,. \label{eq:Einstein-equation-e-firstappearance}
\end{equation} 
If ${\bm g_\e}$ converges strongly to a Lorentzian metric ${\bm g}$ in $C^0_\tp{loc}\cap W^{1,2}_\tp{loc}$ as $\e\to 0$, by the structure of the Ricci tensor, it is easy to see we can pass to the limit in \eqref{eq:Einstein-equation-e-firstappearance}; our effective model is then simply vacuum, i.e.\ $\mathbf{Ric}(\bm{g})=0$. On the other hand, if the convergence is only weak, then $(\bm{\mc M},{\bm g})$ is no longer necessarily Ricci-flat, so the effective model is non-trivial. 
From the Einstein equations
\begin{equation}
\mathbf{Ric}(\bm g)=8\pi\mathbb{T}\,,\label{eq:Einstein-equation-1}
\end{equation}
where $\mathbb{T}$ denotes a (trace-reversed) energy-momentum tensor, we are tempted to identify the Ricci tensor obtained in the limit as \textit{matter}. However, in order for $\mb T$ to correspond to a true Einstein matter model, we must supplement \eqref{eq:Einstein-equation-1} with a matter field equation coupled to the geometry of $(\bm{\mc M}, \bm g)$ in order to get a closed system, see already Conjecture \ref{conj:Burnett} below for an example.


As we have just seen, the effective model depends crucially on the convergence assumptions for the sequence $\bm g_\e$. In this paper we are concerned with the so-called \textit{high-frequency limit}, in which small amplitude but high-frequency waves propagate on a fixed background. The high-frequency limit was studied in the physics literature \cite{Brill1964,Choquet-Bruhat1969, Green2013, Hogan1993,Isaacson1968a,Isaacson1968, MacCallum1973,Szybka2014,Szybka2016}
 and we rely here on Burnett's \cite{Burnett1989} and Green--Wald's framework \cite{Green2011}. More precisely, we assume that there is a smooth Lorentzian metric $\bm g$ such that, for each compact set $K\subset \bm{\mc M}$ with a fixed coordinate chart, there is a sequence $\lambda_\e\searrow 0$ such that
\begin{equation}
\label{eq:Burnett-conditions}
\|\p^k (\bm g_\e-\bm g)\|_{L^\infty(K)} \leq C(K)\lambda_\e^{1-k}, \qquad \text{for } k=0,1,2.
\end{equation}
Under these assumptions, Burnett \cite{Burnett1989} conjectured that the effective model is Einstein--massless Vlasov. We borrow a more precise formulation of Burnett's conjecture from the recent work of Huneau--Luk \cite{Huneau2019}:

\begin{conjecture}[Burnett]
\label{conj:Burnett}
Let $(\bm g_\e)_{\e>0}$ and $\bm g$ be smooth Lorentzian metrics on $\bm{\mc M}$ satisfying \eqref{eq:Burnett-conditions}. There is a finite, non-negative Radon measure measure $\bm\mu$ in $\bm{T^* \mc M}$ such that $(\bm{\mc M}, \bm g, \bm \mu)$ is a solution to the Einstein--massless Vlasov system, that is:
\begin{enumerate}
\item the Einstein equation \eqref{eq:Einstein-equation-1} holds, where $\mb T$ is defined by its action on a test vector field $Y$ as
$$\int_{\bm{\mc M}}\mb T(Y,Y)\d\tp{Vol}_{\bm{g}} = \frac{1}{8\pi} \int_{\bm{T^*\mc M}} \xi_a \xi_b Y^a Y^b \d \bm{\mu}; $$
\item $\bm \mu$ is solves the massless Vlasov equation with respect to $\bm g$:
 \begin{enumerate}[ref={\alph{enumi}\textsubscript{\arabic*}},label={\normalfont(\alph{enumi}\textsubscript{\arabic*})}, itemsep=0em]
\item $\bm \mu$ is supported on the zero mass shell $\{(x,\xi)\in \bm{T^*\mc M}: \bm g^{ab}(x)\xi_a\xi_b=0\};$
\item the Vlasov equation holds distributionally: for any $a\in C^\infty_c(\bm{T^* \mc M}\exc\{0\})$,
$$\int_{\bm{T^* \mc M}} \left(\bm g^{ab}\xi_b \p_{x^a} a -\frac 1 2 \p_{x^c} \bm g^{a b} \xi_a\xi_b \p_{\xi_c} a\right)\d \bm{\mu}=0\,.$$
\end{enumerate}
\end{enumerate}
\end{conjecture}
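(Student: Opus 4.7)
The first step is to trade the geometric equation \eqref{eq:Einstein-equation-e-firstappearance} for a quasilinear hyperbolic system in a good gauge. Under $U(1)$ symmetry, a Kaluza--Klein-type reduction of the $4$-dimensional vacuum Einstein equations produces a $(2{+}1)$-dimensional system in which the ``extra'' metric components become a wave map $\phi_\e\colon(\mc M,g_\e)\to\mb H^2$ coupled to gravity. I would fix an elliptic gauge for the $(2{+}1)$-dimensional metric $g_\e$ (as in Huneau--Luk): the lapse, shift and conformal factor are then determined by elliptic equations driven by quadratic expressions in $\partial\phi_\e$ and $\partial g_\e$, while the wave-map component satisfies a genuine quasilinear wave equation. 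Because the Burnett hypothesis \eqref{eq:Burnett-conditions} for $k=0,1$ says that $\phi_\e\to\phi$ uniformly but $\partial\phi_\e\rightharpoonup\partial\phi$ only weakly in $L^2_{\tp{loc}}$, the whole difficulty concentrates on passing to the limit in the quadratic nonlinearity of this wave map.

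\textbf{Microlocal defect measure.} For $\psi_\e\equiv\phi_\e-\phi$, which is $O(\lambda_\e)$ in $L^\infty$ and bounded in $W^{1,\infty}$, I would introduce a microlocal defect measure (H-measure/Wigner measure) $\mu$ on $T^*\mc M\setminus 0$ associated to the sequence $\lambda_\e\partial\psi_\e$, à la Tartar--G\'erard. By construction $\mu$ is a non-negative Radon measure, and for any symbol $a\in C^\infty_c(T^*\mc M\setminus 0)$ one has
\[
\int_{T^*\mc M} a(x,\xi)\,\xi_a\xi_b\,\d\mu \;=\;\lim_{\e\to 0}\,\langle\,\mathrm{Op}(a)\partial_a\psi_\e,\partial_b\psi_\e\rangle,
\]
so that the defect in the quadratic term of the wave map (and hence in $\mathbf{Ric}(g_\e)$) is precisely the moment $\int\xi_a\xi_b\d\mu$.

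\textbf{Support and transport.} To prove that $\mu$ is concentrated on the null cone $\{g^{ab}\xi_a\xi_b=0\}$ and transported by the geodesic flow of $g$, I would test the wave-map equation for $\psi_\e$ against pseudodifferential operators $\mathrm{Op}(a)\psi_\e$, using that $g_\e$ converges strongly so that its symbol may be replaced by $g$ up to lower order errors. The leading commutator identity is the standard one:
\[
\tfrac12\{g^{ab}\xi_a\xi_b,\,a\}\cdot\mu\;=\;0\quad\text{in }\mc D'(T^*\mc M\setminus 0),
\]
which simultaneously yields the support condition 2(a) and the transport equation 2(b). Inserting the identity $\partial_a\phi_\e\partial_b\phi_\e\rightharpoonup\partial_a\phi\partial_b\phi+\int\xi_a\xi_b\d\mu$ into the reduced Einstein equations and combining with the elliptic equations for the metric coefficients, the weak limit of $\mathbf{Ric}(g_\e)=0$ becomes $\mathbf{Ric}(g)=8\pi\mathbb{T}$ with $\mathbb{T}$ of the advertised Vlasov form.

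\textbf{Main obstacle.} The delicate point, and the place where an improvement over Huneau--Luk must happen, is the low-regularity accounting in the commutator step: the raw Burnett bound \eqref{eq:Burnett-conditions} controls only $\partial^2 g_\e$ at size $\lambda_\e^{-1}$, which is exactly the borderline regularity needed to make the H-measure symbol calculus and the transport equation meaningful. The temptation to differentiate the equation once more (as in \cite{Huneau2019}) costs an extra derivative; to avoid this I would route the argument through the wave map structure directly --- writing the nonlinearity as a null form, handling it via paradifferential calculus on $(g_\e)$, and using the ellipticity of the gauge to trade derivatives of $g_\e$ for derivatives of $\phi_\e$ --- so that only the bounds in \eqref{eq:Burnett-conditions} are ever invoked.
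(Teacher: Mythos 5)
Your outline follows the same architecture as the paper (Kaluza--Klein reduction to a wave map into $\mb H^2$ in elliptic gauge, an H-measure for the defect, then support and transport properties), but it has genuine gaps at precisely the two points where the actual work lies. First, in the ``Support and transport'' step you assert that ``$g_\e$ converges strongly so that its symbol may be replaced by $g$ up to lower order errors.'' This is false at the relevant level of regularity: under \eqref{eq:Burnett-conditions} one only has $g_\e\to g$ in $C^0$ with $\p g_\e$ bounded (hence only weakly convergent), so the error term $H_\e=(\Box_g-\Box_{g_\e})u_\e$ is \emph{not} negligible by any soft symbol-calculus argument --- the Calder\'on commutator bound needs strong Lipschitz convergence. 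Controlling $\lim_{\e\to0}\langle H_\e, Ae_0(u_\e-u)\rangle$ is the heart of the problem: it requires reducing to the trilinear commutator $\int \p_\a w_\e\,[g_\e^{\a\beta}-g^{\a\beta},A]\,\p_\beta e_0 w_\e\,\d x$ by a parity-based integration by parts, and then a frequency decomposition (low / spatially-dominated / time-dominated) in which the elliptic gauge handles the spatial regime and, in the time-dominated regime, one inverts $e_0$ as a pseudo-differential operator and uses the $L^4_{\tp{loc}}$ bound on $\Box_g u_\e$ (which itself needs the rate assumption in Hypotheses~\ref{hyp:main}\eqref{it:hyprates}, not just \eqref{eq:Burnett-conditions} naively). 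Your proposed remedy --- ``paradifferential calculus'' and ``trading derivatives of $g_\e$ for derivatives of $\phi_\e$'' --- restates the difficulty rather than resolving it.

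Second, your transport step ignores the contribution of the semilinear nonlinearity $\Gamma^I_{JK}(u_\e)g_\e^{-1}(\tp d u_\e^J,\tp d u_\e^K)$ to the propagation equation. This term acts as a source in the transport of the H-measure and does \emph{not} vanish in general: its limit is computed via trilinear compensated compactness (weak continuity of $Xu^1_\e\,g^{-1}(\tp d u^2_\e,\tp d u^3_\e)$ \emph{only at zero}, Lemma~\ref{lemma:trilinear}), and it couples the component measures $\nu^{IJ}$. The homogeneous Vlasov equation for $\nu$ is recovered only because the Lagrangian structure --- contracting with $\mf g_{IJ}(u)$ and using the Christoffel symbol identity \eqref{eq:Christoffelsymbols} --- makes the real parts of these couplings cancel against the $x$-dependence of $\mf g_{IJ}(u(x))$ along the flow. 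Without this cancellation your claimed identity $\{g^{ab}\xi_a\xi_b,a\}\cdot\mu=0$ is unjustified. (A minor point: the H-measure should be generated by $\p\psi_\e$ itself, not $\lambda_\e\p\psi_\e$; the latter converges strongly to zero and would yield the trivial measure.)
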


We refer the reader to \cite{Andreasson2011,Rendall1997} for a general introduction to the Einstein--Vlasov model.
According to Conjecture \ref{conj:Burnett}, lack of compactness in the Ricci tensor manifests itself as massless matter, which is propagated along the null directions of spacetime without collisions. In fact, Burnett went further and conjectured that, conversely, all Einstein--massless Vlasov systems can be realized as the weak limit of a sequence of vacuum spacetimes satisfying \eqref{eq:Burnett-conditions}. We refer the reader to Huneau--Luk \cite{Huneau2018a,Huneau2018} for progress in that direction, as well as Touati \cite{Touati2021} in a lower regularity setting.

Let us emphasize that although \eqref{eq:Burnett-conditions} are indeed weak convergence assumptions they forbid the occurrence of concentrations. For a setting where  concentrations are allowed, and without symmetry assumptions, a complete characterization of the weak closure of the Einstein vacuum equations was recently obtained by Luk--Rodnianski \cite{Luk2020}; see also \cite{LeFloch2019, LeFloch2020} in $\mb T^2$-symmetry.

The purpose of this paper is to prove Conjecture \ref{conj:Burnett} under symmetry and gauge assumptions:

\begin{maintheorem}\label{thm:main}
Conjecture \ref{conj:Burnett} is true when all the metrics have $U(1)$-symmetry and can be put in elliptic gauge with respect to a fixed chart on $\bm{\mc M}$.
\end{maintheorem}

See Theorem~\ref{thm:luk-huneau} below for a more precise statement. We recall that a manifold $\bm{\mc M}$ has $U(1)$-symmetry if it has a one-dimensional spacelike group of isometries; the elliptic gauge conditions are more involved, see Section~\ref{sec:quasilinear} and Appendix~\ref{sec:appendix}.

A version of Theorem \ref{thm:main} where \eqref{eq:Burnett-conditions} was assumed up to $k=4$ was proved earlier by Huneau--Luk \cite{Huneau2019}. It is desirable to make assumptions only up to $k=2$ derivatives in \eqref{eq:Burnett-conditions}, since the Einstein equations are second order. Besides this improvement, our proof is perhaps simpler, while remaining completely self-contained. On the way to Theorem \ref{thm:main} we also obtain results of independent interest for wave maps, see Section \ref{sec:semilinear}.
Our proof consists of three steps:
\begin{enumerate}[label=\arabic*., itemsep=0pt]
\item \textbf{The quasilinear terms:} the $U(1)$-symmetry and gauge assumptions can be used to show that oscillations in the quasilinear terms in \eqref{eq:Einstein-equation-e-firstappearance} do not contribute to the effective model in any way. Indeed, the massless Vlasov matter is produced by the oscillations in a \textit{semilinear} wave map system. 
\item \textbf{The semilinear terms:} to understand the oscillations in wave map systems, we rely on essentially classical bilinear and trilinear compensated compactness results due to Murat and Tartar \cite{Murat1978,Tartar2005}, of which we give simple proofs in Section \ref{sec:linear-wave-non-oscillating}. Through these results, and thanks to the \textit{Lagrangian structure} of wave maps, we find that, surprisingly, the heart of the problem is to understand oscillation effects in a \textit{linear} scalar wave equation with respect to an oscillating metric.
\item \textbf{The linear terms:} to study linear scalar wave equations with respect to oscillating metrics, our strategy is to take the metric oscillations as \textit{sources} for a wave equation with respect to the limit metric. It turns out that the metric oscillations contribute to the propagation of lack of compactness via a commutator: this is shown by a careful integration by parts argument relying on the parity of the Vlasov equation. We estimate this commutator through a fine frequency analysis in Fourier space which exploits simultaneously the gauge choice, the cancellations encoded in the commutator and some of the rate assumptions \eqref{eq:Burnett-conditions}.
\end{enumerate}
The remainder of the introduction discusses each of these steps in detail.

\subsection{The quasilinear terms}
\label{sec:quasilinear}


We begin by describing in further detail the setup of the Main Theorem. Fix a manifold $\bm{\mc{M}}$ which can be trivialized along one direction, $\bm{\mc{M}}=\mc{M}\times \mathbb{R}$. Here, $\mc{M}$ is also a fixed manifold of trivial topology, i.e.\ $\mc{M}=(0,T)\times\mathbb{R}^{2}$ for some $T>0$. We take global coordinates $(t\equiv x^0,x^1,x^2)$ on $\mc{M}$, which we denote with greek indices, and coordinates  $(x^0,x^1,x^2,x^3)$ on $\bm{\mc{M}}$. Henceforth, all derivatives indicated by $\p$, as well as all Sobolev norms, are considered with respect to this fixed chart.

Now take a sequence of Lorentzian metrics $(\bm{g}_\e)_{\e>0}$ on $\bm{\mc{M}}$ of the form
\begin{equation} \label{eq:U1-metric}
\bm g_\e\equiv e^{-2\psi_\e}g_\e+e^{2\psi_\e}\lp(\tp{d}x^3+\mathfrak{A}_{\alpha,\e} \tp{d}x^\alpha\rp)^2\,,
\end{equation}
 where $g_\e$ are Lorentzian metrics on $\mc{M}$ and $\psi_\e$ and $\mathfrak{A}_\e\equiv \mathfrak A_{\a,\e} \d x^\a$ are, respectively, real-valued functions and 1-forms on $\mc{M}$. These conditions ensure that the vector field $\p_{x^3}$ generates a one-dimensional spacelike group of isometries on each $(\bm{\mc{M}},\bm g_\e)$, i.e.\ that these spacetimes are $U(1)$-symmetric. 
 
In order to prove the Main Theorem we first note that, if $\bm g_\e$ is bounded in $W^{1,\infty}_\tp{loc}$ and converges locally uniformly to some $\bm{g}\equiv \bm{g}_0$, then the weak limit $(\bm{\mc{M}},\bm g)$ also has $U(1)$-symmetry. Using the $U(1)$-symmetric metric ansatz \eqref{eq:U1-metric}, we can show that if $(\bm{g}_\e)_{\e>0}$ are vacuum, then $[\mathbf{Ric}(\bm{g}_\e)]_{\alpha 3}=0$ is a linear differential constraint, see \cite{Andersson2004} and  \cite[Chapter XVI.3]{Choquet-Bruhat2008} for details, which is therefore preserved in the limit:
\begin{align}
[\mathbf{Ric}(\bm{g}_\e)]_{\alpha 3}=0 \implies \tp{d}\mathfrak{A}_\e =  e^{-4\psi_\e}\star_{g_\e}\, \tp{d}\omega_\e \,, \quad \e\geq 0\,,  \label{eq:introducing-omega}
\end{align}
where $\omega_\e$ are functions on $\mc{M}$.  We are now ready to state our main result precisely:

%
%
%

\begin{hypotheses} \label{hyp:Einstein} Let $(\bm{g}_\e)_{\e>0}\equiv (g_\e, \psi_\e, \omega_\e)_{\e>0}$ and $\bm{g}\equiv (g\equiv g_0,\psi,\omega)$ satisfy:
\begin{enumerate}
\item \label{it:hypmetrics-Einstein}  in the fixed chart we have introduced, the eigenvalues of $g_\e$ are uniformly bounded above and away from zero and $g$ is a smooth metric such that $g_\varepsilon\to g$ in $C^0_{\tp{loc}}$ as $\e\to 0$ and $g_\e$ is bounded in $W^{1,\infty}_\tp{loc}$; furthermore, for $\e\geq 0$,  $g_\e$ are in an \textit{elliptic gauge}, i.e.\ 
\begin{enumerate}[ref=(\alph{enumi}\textsubscript{\arabic*}),label={\normalfont(\alph{enumi}\textsubscript{\arabic*})}, topsep=0pt]
\item $g_\e$ has the form 
$$g_\e=-N^2_\e(\tp d x^0)^2+\tilde{g}_{ij,\e}(\tp d x^i+\beta^i_\e\tp d x^0)(\tp d x^j+\beta^j_\e \tp d x^0)\,,$$ where $N_\e$ and $\beta_\e$ are, respectively, functions and vectors on $\mc{M}$ and $\tilde{g}_\e$ is a Riemannian metric on $\mathbb{R}^2$ which we can, and do, take to be conformally flat;
\item $x^0$ hypersurfaces are maximal, i.e.\ they have zero mean curvature;
\end{enumerate}
\item $\psi_\varepsilon\to \psi$ in $C^0_\tp{loc}$, $\psi_\varepsilon\w \psi$ in $W^{1,4}_\tp{loc}$, and similarly replacing $\psi_\e,\psi$ with $\omega_\e,\omega$;
\item $\lVert g_\e^{\alpha\beta}-g^{\alpha\beta}\rVert_{L^\infty(K)}\left(\lVert \p^2 (\psi_\e-\psi)\rVert_{L^4(K)}+\lVert \p^2 (\omega_\e-\omega)\rVert_{L^4(K)}\right)\lesssim_K 1$ for every compact $K\subset \mc{M}$.
\label{it:hypsols-Einstein}
\end{enumerate}
\end{hypotheses}

We note that Hypotheses \ref{hyp:Einstein} are 
\textit{strictly weaker} than the high-frequency limit conditions \eqref{eq:Burnett-conditions}, when specialized to the $U(1)$-symmetric and elliptic gauge case.

\begin{arabictheorem} \label{thm:luk-huneau} 
Let $(\bm{g}_\e)_{\e>0}$ and $\bm{g}$ satisfy Hypotheses~\ref{hyp:Einstein} and assume that, for $\e>0$, $\bm{g}_\e$ solve \eqref{eq:Einstein-equation-e-firstappearance}. Then there is a non-negative Radon measure $\nu$ on $S^*\mc M$ such that $(\bm{\mc{M}},\bm g,\nu)$ is a radially averaged measure-valued solution of the restricted Einstein--Vlasov equations in $U(1)$-symmetry. More precisely:
\begin{enumerate}
\item {\normalfont \textbf{Limit equation:}} For every vector field $Y\in C^{\infty}_0(\mc{M})$, the tensor $\mathbf{Ric}(\bm g)$ satisfies
 \label{it:thm-cor-luk-huneau-T}
\begin{gather}\label{eq:thm-cor-luk-huneau-T}
\begin{gathered}
[\mathbf{Ric}(\bm g)]_{\alpha3}=0, 
\qquad
[\mathbf{Ric}(\bm g)]_{33}=0, \qquad
\int_{\mc{M}}[\mathbf{Ric}(\bm g)]_{\a\beta}Y^\a Y^\beta\d \mr{Vol}_{g}=
\int_{S^*\mc{M}} \xi_\a\xi_\beta Y^\a Y^\beta\d\nu\,.
\end{gathered}
\end{gather} 
\item {\normalfont \textbf{Vlasov equation:}}  \label{it:thm-cor-luk-huneau-Vlasov}  $(\mc M,g,\nu)$ is a radially averaged measure-valued solution of massless Vlasov:
  \begin{enumerate}[ref={\alph{enumi}\textsubscript{\arabic*}},label={\normalfont(\alph{enumi}\textsubscript{\arabic*})}, itemsep=0em]
\item {\normalfont Support property:} $\nu$ is supported on the zero mass shell of $g$, i.e.\ for all $\varphi\in C_0^\infty (\mc{M})$
   \label{it:thm-cor-luk-huneau-localization}
\begin{gather*}
\int_{S^*\mc M} \varphi(x)g^{\a\beta}\xi_\alpha\xi_\beta \d\nu =0\,.
\end{gather*}
\item {\normalfont Propagation property:} 
for all $ \tilde a\in C_0^\infty (S^*\mc{M})$, extended as a positively 1-homogeneous function to $T^*\mc M\exc\{0\}$, the measure $\nu$ satisfies 
\label{it:thm-cor-luk-huneau-propagation}
\begin{gather}
\int_{S^*\mc M} \lp[g^{\alpha\beta}\xi_\alpha\p_{x^\beta}\tilde a-\frac12\p_{x^\mu}g^{\alpha\beta}\xi_\alpha\xi_\beta\p_{\xi_\mu}\tilde a\rp] \tp d\nu 
= 0
\,.
\label{eq:thm-cor-luk-huneau-Vlasov}
\end{gather}
\end{enumerate}
\end{enumerate}
\end{arabictheorem}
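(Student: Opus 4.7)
The proof follows the three-step strategy described in the introduction. First, I would exploit the $U(1)$-symmetric reduction of the vacuum Einstein equations: via \eqref{eq:U1-metric} and \eqref{eq:introducing-omega}, the system \eqref{eq:Einstein-equation-e-firstappearance} decomposes into an elliptic system for the $(2+1)$-dimensional metric $g_\e$ (with unknowns $N_\e, \beta_\e, \tilde g_\e$) coupled to a semilinear wave map system for the pair $(\psi_\e,\omega_\e)$ taking values in the hyperbolic plane. Because of the elliptic gauge choice in Hypotheses~\ref{hyp:Einstein}, $g_\e$ satisfies uniformly elliptic equations whose right-hand sides are quadratic in $\partial\psi_\e, \partial\omega_\e$, and these are bounded in $L^4_{\tp{loc}}$ by assumption. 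Standard elliptic regularity then upgrades $g_\e \to g$ from $C^0_{\tp{loc}}$ to $W^{1,p}_{\tp{loc}}$ for some $p>2$, so the quasilinear contributions to $\mathbf{Ric}(\bm g_\e)$ involving $\partial g_\e$ converge strongly and contribute nothing to the defect. Only the oscillations carried by $\partial\psi_\e, \partial\omega_\e$ persist, and they enter $\mathbf{Ric}(\bm g)$ through the stress-energy tensor of the wave map.

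Next, I would introduce the microlocal defect measure $\nu$ on $S^*\mc M$ associated to the sequence $(\psi_\e-\psi,\omega_\e-\omega)$, which is bounded in $W^{1,2}_{\tp{loc}}$. The Murat--Tartar bilinear and trilinear compensated compactness results (Section~\ref{sec:linear-wave-non-oscillating}) then identify the weak limit of the stress-energy as $\int \xi_\alpha\xi_\beta\d\nu$, yielding the third identity in \eqref{eq:thm-cor-luk-huneau-T}. The two remaining identities, $[\mathbf{Ric}(\bm g)]_{\alpha 3}=0$ and $[\mathbf{Ric}(\bm g)]_{33}=0$, pass to the limit because, once Step~1 is in place, both reduce to linear relations in the oscillating quantities. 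For the support property, I would invoke the standard H-measure localization principle: each of $\psi_\e,\omega_\e$ solves a scalar equation $\Box_{g_\e}\psi_\e = F_\e$ with $F_\e$ bounded in $L^2_{\tp{loc}}$, which forces the principal symbol of $\Box_g$ to vanish on $\supp \nu$.

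The main obstacle is the propagation identity \eqref{eq:thm-cor-luk-huneau-Vlasov}. Were the background metric fixed, this would be a direct consequence of the classical propagation theorem for H-measures. Here, however, $\psi_\e$ and $\omega_\e$ are driven by the \emph{oscillating} operator $\Box_{g_\e}$, and the discrepancy
\begin{equation*}
(\Box_{g_\e}-\Box_g)\psi_\e \;=\; \partial_{x^\mu}\bigl((g_\e^{\mu\nu}-g^{\mu\nu})\partial_{x^\nu}\psi_\e\bigr)+\text{l.o.t.}
\end{equation*}
generates a potentially non-trivial contribution when the H-measure identity is tested against a symbol $\tilde a \in C^\infty_0(S^*\mc M)$ extended positively $1$-homogeneously in $\xi$. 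My plan is to integrate by parts in both $x$ and Fourier variables and reorganize this contribution as a commutator between multiplication by $(g_\e^{\mu\nu}-g^{\mu\nu})$ and the pseudodifferential operator built from $\tilde a$. The parity structure inherent in the Vlasov symbol in \eqref{eq:thm-cor-luk-huneau-Vlasov} should then kill the leading-order term. What remains is a trilinear expression in frequency space which I would estimate via a Littlewood--Paley decomposition, balancing the low-regularity factor $\|g_\e^{\mu\nu}-g^{\mu\nu}\|_{L^\infty}$ against the high-regularity factor $\|\partial^2\psi_\e\|_{L^4}$ using precisely Hypothesis~\ref{hyp:Einstein}(c). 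This is the single place in the argument where the relaxation from four-derivative to two-derivative control is spent, and it is the main technical difficulty. The analogous argument for $\omega_\e$ completes the proof.
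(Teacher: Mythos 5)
Your overall architecture --- decouple the quasilinear (gauge) part from the semilinear wave map part, identify the defect via an H-measure, and handle the oscillating metric through a commutator plus a frequency-space analysis --- matches the paper's. But there is a genuine gap in the propagation step. You assert that ``were the background metric fixed, this would be a direct consequence of the classical propagation theorem for H-measures.'' It would not: even for fixed $g$, each of $\psi_\e,\omega_\e$ solves $\Box_g u_\e^I = Q_\e^I$ with $Q_\e^I=-\Gamma^I_{JK}(u_\e)g^{-1}(\d u_\e^J,\d u_\e^K)$, and the classical propagation theorem yields a transport equation for each correlation measure $\nu^{IJ}$ with a \emph{source} given by the H-measure correlation between $\p u_\e^I$ and $Q_\e^L$. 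By trilinear compensated compactness (the quantity $\p u_\e\, g^{-1}(\d u_\e,\d u_\e)$ is weakly continuous only at $u=0$), this source does not vanish when $u\neq 0$: it equals $\langle g^{\a\beta}[\Gamma^L_{JK}(u)+\Gamma^L_{KJ}(u)]\p_\beta u^K\,\tilde\nu^{IJ}_{\gamma\a},\sigma^0(A)\rangle$ and couples the measures $\nu^{IJ}$ to one another. The measure in the statement is $\nu=\mf g_{IJ}\nu^{IJ}$ (here $2\nu^{11}+\tfrac12 e^{-4\psi}\nu^{22}$), and the homogeneous Vlasov equation \eqref{eq:thm-cor-luk-huneau-Vlasov} holds only because, after contracting with $\mf g_{IL}$ and using $\mf g_{IL}\Gamma^L_{JK}=\tfrac12(\mf g_{KI,J}+\mf g_{JI,K}-\mf g_{JK,I})$ together with the Hermitian symmetry of $\tilde\nu^{IJ}$, the real part of this source exactly cancels the term produced when the transport derivative falls on the weight $\mf g_{IJ}(u)$. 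This cancellation, which is where the Lagrangian structure of the wave map enters, is absent from your proposal; without it you cannot conclude that the right-hand side of the propagation identity vanishes.

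A secondary imprecision: in Step 1 you claim elliptic regularity upgrades $g_\e\to g$ to strong $W^{1,p}_{\tp{loc}}$ convergence for some $p>2$. The gauge equations are elliptic only in the spatial variables, so they control $\delta^{ij}\p^2_{ij}$ of the metric components and hence strong convergence of \emph{spatial} first derivatives (and of $e_0\tilde g^{ij}_\e$ via maximality); the time derivatives $\p_0 N_\e,\p_0\beta_\e$ may continue to oscillate. The convergence $\mr{Ric}(g_\e)\wstar\mr{Ric}(g)$ still holds, but because each nonlinear term in the Ricci tensor is a product of one strongly and one weakly convergent factor --- and this residual oscillation of time derivatives of the metric is precisely why the commutator and frequency analysis of your Step 3 (organized around the vector field $e_0$, with the wave equation used to trade $e_0^2 u_\e$ for spatial second derivatives in the time-dominated regime) cannot be avoided.
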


We note that, as long as  $(\bm{\mc M},\bm g)$ is globally hyperbolic, $(\bm{\mc{M}},\bm g,\nu)$ naturally induces a non-radially averaged solution to the Einstein--massless Vlasov system, see \cite[Section 2]{Huneau2019}.

\begin{remark}[Beyond the vacuum case] Our methods allow for an extension of Theorem~\ref{thm:luk-huneau} to a case where $\bm{g}_\e$ are not vacuum but are sourced by a tensor $\mb T_\e$. To be precise, we require that the $(\alpha,3)$ components  of $\mb T_\e$ must vanish and that there is a smooth tensor $\mb T$ such that $\mb T_\e\to \mb T$ in $C^0_\tp{loc}$ and $\mb T_\e\w \mb T$ in $L^4_\tp{loc}$.  In that case, the analogue of \eqref{eq:thm-cor-luk-huneau-T} reads as
\begin{gather*}
[\mathbf{Ric}(\bm g)]_{\alpha3}=0, 
\qquad
[\mathbf{Ric}(\bm g)]_{33}=8\pi\,\mathbb{T}_{33}, \\
\int_{\mc{M}}[\mathbf{Ric}(\bm g)]_{\a\beta}Y^\a Y^\beta\d \mr{Vol}_{g}=
\int_{\mc{M}}8\pi\lp[ \mathbb{T}_{\alpha\beta}+\mathbb{T}_{33}e^{-2\psi}g_{\alpha\beta}\rp]Y^\alpha Y^\beta \d \tp{Vol}_{g}+
\int_{S^*\mc{M}}  \xi_\a\xi_\beta Y^\a Y^\beta\d\nu\,,
\end{gather*} 
and the Vlasov equation in \eqref{eq:thm-cor-luk-huneau-Vlasov} has a source term related to the failure of compactness in $\mb T_\e$.

\end{remark}

To understand the proof of Theorem~\ref{thm:luk-huneau}, let us begin by computing the curvature of the limit spacetime $(\bm{\mc{M}},\bm{g})$; we again use the notation $\bm g_0\equiv \bm g$. As we have seen above, this spacetime also has $U(1)$-symmetry, so our computations rely on the form of $U(1)$-metrics given in \eqref{eq:U1-metric}.

\vspace{-.66\baselineskip}
\paragraph{Curvature in the $U(1)$-symmetry directions.}  We have already seen that the vacuum condition passes to the limit in the $(\alpha,3)$ direction, motivating us to introduce functions $\omega_\e$, $\e\geq 0$, on $\mc{M}$ as in \eqref{eq:introducing-omega}. One can further show, see \cite{Andersson2004} and  \cite[Chapter XVI.3]{Choquet-Bruhat2008}, that  
\begin{align}
[\mathbf{Ric}(\bm{g}_\e)]_{\alpha 3}=0=\Box_{g_\e} \omega_\e-4g^{-1}_\e(\tp{d}\psi_\e,\tp{d}\omega_\e)\,, \qquad \tp{for all }\e\geq 0\,. \label{eq:Ricci-coefficient-alpha3}
\end{align}
In the $(3,3)$ direction, \eqref{eq:Einstein-equation-e-firstappearance} leads to a nonlinear wave equation, but the nonlinear terms are weakly continuous, see Lemma \ref{lemma:intro-CC}, and hence
\begin{align}
[\mathbf{Ric}(\bm g_\e)]_{33}= 0=\Box_{g_\e} \psi_\e +\frac12 e^{-4\psi_\e}g^{-1}_\e(\tp{d}\omega_\e,\tp{d}\omega_\e) \,, \qquad\tp{for all } \e\geq 0\,.\label{eq:Ricci-coefficient-33}
\end{align}
Thus, the $(\alpha,3)$ and $(3,3)$ directions provide no contributions to any matter produced in the limit. Moreover, from \eqref{eq:Ricci-coefficient-alpha3} and \eqref{eq:Ricci-coefficient-33} we obtain the wave map equation
\begin{align} \label{eq:wave-maps-from-Einstein}
\begin{dcases}
\Box_{g_\e} \psi_\e +\frac12 e^{-4\psi_\e}g^{-1}_\e(\tp{d}\omega_\e,\tp{d}\omega_\e)=0\,,\\
\Box_{g_\e} \omega_\e-4g^{-1}_\e(\tp{d}\psi_\e,\tp{d}\omega_\e)=0\,,
\end{dcases}
\end{align}
from $(\mc{M},g_\e)$ to the Poincaré plane $(\mathbb{R}^2,\mathfrak{g})$, where $\mathfrak{g}=2(\tp{d}\psi)^2+\frac12e^{-4\psi}(\tp{d}\omega)^2$.  We recall that \eqref{eq:wave-maps-from-Einstein}, being a wave map system, is the Euler--Lagrange equation for a Lagrangian on the domain $(\mc{M},g_\e)$; in this case, the Lagrangian density is
\begin{align*}
\mathbb{L}_{\alpha\beta}[\psi_\e,\omega_\e]\equiv 2\p_\alpha\psi_\e\p_\beta\psi_\e +\frac{1}{2}e^{-4\psi_\e}\p_\alpha \omega_\e\p_\beta\omega_\e\,, \qquad \e\geq 0.
\end{align*}

\vspace{-.66\baselineskip}
\paragraph{Curvature in the non-symmetric directions.} Finally, we turn to the curvature in the $(\alpha,\beta)$ directions. From the vacuum condition \eqref{eq:Einstein-equation-e-firstappearance} on $\bm{g}_\e$ and its $U(1)$-symmetry, we find that 
\begin{equation}
[{\rm Ric}(g_\e)]_{\alpha\beta}= \mathbb{L}_{\alpha\beta}[\psi_\e,\omega_\e]=2\p_\alpha\psi_\e\p_\beta\psi_\e +\frac{1}{2}e^{-4\psi_\e}\p_\alpha \omega_\e\p_\beta\omega_\e\,, \qquad \tp{only for }\e>0\,. \label{eq:quasilinear-condition}
\end{equation}
Thus, for the $U(1)$-symmetric weak limit $\bm{g}$, we easily compute
\begin{align}
[\mathbf{Ric}(\bm g)]_{\alpha\beta}&= [{\rm Ric}(g)]_{\alpha\beta}-\mathbb{L}_{\alpha\beta}[\psi,\omega]\nonumber\\
&=\underbracket[0.140px]{\text{w*-}\lim_{\e \to 0}\mathbb{L}_{\alpha\beta}[\psi_\e,\omega_\e]-\mathbb{L}_{\alpha\beta}[\psi,\omega]}_{\text{semilinear terms}}
+\underbracket[0.140px]{[{\rm Ric}(g)]_{\alpha\beta}-\text{w*-}\lim_{\e \to 0}[{\rm Ric}(g_\e)]_{\alpha\beta}}_{\text{quasilinear terms}}\,. \label{eq:Ricci-coefficient-alphabeta}
\end{align}
From the symmetry assumptions alone, we find in \eqref{eq:Ricci-coefficient-alphabeta} that there are two different types of contributions to the matter created in the limit in the $(\alpha,\beta)$ directions: those arising from the \textit{semilinear} wave map equation \eqref{eq:wave-maps-from-Einstein} for $\e>0$, and those arising from the \textit{quasilinear} condition \eqref{eq:quasilinear-condition} which makes $g_\e$ in the wave map equation depend on the solution itself. However, it is easy to see that the latter contributions are \textit{forbidden} under the gauge conditions we impose:
\begin{lemma}
\label{lemma:ricconv}
 If Hypotheses~\ref{hyp:Einstein}(\ref{it:hypmetrics-Einstein}) hold, then
$\mr{Ric}(g_\e)\wstar \mr{Ric}(g)$ in the sense of distributions.
\end{lemma}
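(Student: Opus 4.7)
The obstruction to weak continuity of $\mathrm{Ric}(g_\e)$ comes entirely from the quadratic $(\p g_\e)^2$ terms: I would parametrize $g_\e$ by the gauge variables $(N_\e, \beta_\e^i, \gamma_\e)$ with $\tilde{g}_{ij,\e} = e^{2\gamma_\e}\delta_{ij}$, all bounded in $W^{1,\infty}_\tp{loc}$ and uniformly convergent to $(N,\beta^i,\gamma)$, and observe that the linear-in-$\p^2 g_\e$ portion of Ricci passes to the limit in $\mc D'$ by the weak-$\ast$ convergence $\p g_\e \wstar \p g$ in $L^\infty_\tp{loc}$. The quadratic terms are not weakly continuous in general---a direct check with $N_\e = 1+\e\,k(x^1/\e)$, $\beta_\e=0$, $\gamma_\e=0$ shows that the gauge conditions of Hypothesis~\ref{hyp:Einstein}(\ref{it:hypmetrics-Einstein}) alone do not suffice---so additional structure must be brought in.

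The plan is to exploit the vacuum Einstein equations for $\bm g_\e$ assumed in the Main Theorem, which after $U(1)$-reduction give $[\mathrm{Ric}(g_\e)]_{\alpha\beta} = \mathbb L_{\alpha\beta}[\psi_\e,\omega_\e]$; by Hypothesis~\ref{hyp:Einstein}(b) this is uniformly bounded in $L^2_\tp{loc}$. In the elliptic gauge, an ADM / Gauss--Codazzi--Mainardi computation produces three linear elliptic equations on the gauge variables: the Hamiltonian constraint gives $\Delta\gamma_\e = \tp{(source)}$, the momentum constraints give $\Delta\beta^i_\e = \tp{(source)}$, and differentiating the maximal slicing condition in time and inserting the vacuum relation yields $\Delta N_\e = \tp{(source)}$. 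All three right-hand sides are uniformly bounded in $L^2_\tp{loc}$, since they involve $[\mathrm{Ric}(g_\e)]_{\alpha\beta}$, $|K_\e|^2$ (bounded in $L^\infty$ from the $W^{1,\infty}$ bound on $g_\e$), and algebraic combinations of the strongly convergent gauge variables.

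By standard elliptic regularity, this upgrades $(N_\e, \beta^i_\e,\gamma_\e)$ to be bounded in $W^{2,2}_\tp{loc}$, and Rellich compactness then yields \emph{strong} convergence of $\p(N_\e,\beta^i_\e,\gamma_\e)$ in $L^2_\tp{loc}$ along a subsequence---the decisive upgrade over the given $W^{1,\infty}_\tp{loc}$ bound. Since each component of $\p g_\e$ is a polynomial in the uniformly convergent variables times derivatives of the gauge variables, $\p g_\e \to \p g$ strongly in $L^2_\tp{loc}$; every quadratic $(\p g_\e)^2$ term in $\mathrm{Ric}(g_\e)$ then converges in $L^1_\tp{loc}$, and combined with the distributional convergence of the linear part one obtains $\mathrm{Ric}(g_\e) \wstar \mathrm{Ric}(g)$. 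The main obstacle is the first step: identifying the three elliptic operators in the tensor calculus of the elliptic gauge and checking the $L^2_\tp{loc}$ source bounds carefully. This is part of the standard elliptic-gauge machinery of the Huneau--Luk program and is best relegated to the appendix referenced in the paper.
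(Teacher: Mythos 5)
Your overall strategy is the same as the paper's (Lemma~\ref{lemma:appendix}): use the elliptic-gauge equations for $(N_\e,\beta_\e^i,\gamma_\e)$ together with an $L^2_\tp{loc}$ bound on $\mr{Ric}(g_\e)$ (coming, in the application, from the vacuum equations and the $W^{1,4}_\tp{loc}$ bounds on $\psi_\e,\omega_\e$) to gain a derivative on the gauge variables and kill the quadratic terms. Your observation that Hypotheses~\ref{hyp:Einstein}(\ref{it:hypmetrics-Einstein}) alone cannot suffice and that the Ricci bound is the essential extra input is correct and matches the actual hypotheses of Lemma~\ref{lemma:appendix}.

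There is, however, a genuine gap in the execution. The equations \eqref{eq:elliptic-gauge-laplace-beta}--\eqref{eq:elliptic-gauge-laplace-gamma} are \emph{purely spatial} elliptic equations on each $\{x^0=\mathrm{const}\}$ slice; elliptic regularity therefore bounds only the second \emph{spatial} derivatives $\p_i\p_j(N_\e,\beta_\e,\gamma_\e)$ in $L^2_\tp{loc}$. It does not give a full spacetime $W^{2,2}_\tp{loc}$ bound: nothing controls $\p_0\p N_\e$ or $\p_0\p\beta_\e$, so Rellich compactness in $\mc M$ does not apply, and the claimed strong $L^2_\tp{loc}$ convergence of \emph{all} of $\p g_\e$ is not available. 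Indeed the paper stresses (Section~\ref{sec:linear}) that under these hypotheses only the spatial derivatives of $g_\e$ converge strongly, while $e_0 g_\e$ converges merely weakly --- if your claim held, much of Section~\ref{sec:linear-wave-oscillating} would be unnecessary. The repair is the bookkeeping the paper carries out: from the spatial second-derivative bounds and interpolation one gets $\p_j g_\e^{\a\beta}\to\p_j g^{\a\beta}$ strongly in $L^4_\tp{loc}$; the tensor $H_{ij}$ in \eqref{eq:elliptic-gauge-H} is built from spatial derivatives of $\beta_\e$ only, hence converges strongly and is bounded in $W^{1,2}_\tp{loc}$; and in the ADM expression \eqref{eq:elliptic-gauge-Rij} the time derivatives enter only \emph{linearly}, through $e_0H_{ij}$ and $e_0\tau$ (the latter non-oscillating by the CMC condition), multiplied by uniformly convergent coefficients, so they pass to the limit weakly. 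Every genuinely quadratic first-derivative term is then a product of a strongly convergent factor and a weakly convergent one, which is what yields $\mr{Ric}(g_\e)\wstar\mr{Ric}(g)$.
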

For the convenience of the reader, we reprove this standard fact about elliptic gauge in Appendix~\ref{sec:appendix}. Thus, the quasilinear terms do not contribute to the matter created in the limit, and \eqref{eq:Ricci-coefficient-alphabeta} becomes
\begin{align}
[\mathbf{Ric}(\bm g)]_{\alpha\beta}&= \text{w*-}\lim_{\e \to 0}\mathbb{L}_{\alpha\beta}[\psi_\e,\omega_\e]-\mathbb{L}_{\alpha\beta}[\psi,\omega]
\,. \label{eq:Ricci-coefficient-alphabeta-simpler}
\end{align}
We conclude that, in order to prove Theorem~\ref{thm:luk-huneau}, it is enough to characterize the failure of compactness in the Lagrangian density associated to a \textit{semilinear} wave map equation such as \eqref{eq:wave-maps-from-Einstein}.

\begin{remark}[Decoupling of the Einstein part]\label{rmk:killing-quasilinearity}  Equation \eqref{eq:Ricci-coefficient-alphabeta-simpler} shows that, from the point of view of Theorem~\ref{thm:luk-huneau}, the wave map and the Einstein parts of the system composed of \eqref{eq:wave-maps-from-Einstein} and \eqref{eq:quasilinear-condition} decouple \textit{completely} thanks to the elliptic gauge conditions. Notice that this is in stark contrast with other types of analysis of the system composed of \eqref{eq:wave-maps-from-Einstein} and \eqref{eq:quasilinear-condition}, such as understanding its well-posedness, see e.g.\ \cite{Huneau2018a,Touati2021}: there, the  quasilinearity is the main difficulty and it cannot be removed by any gauge condition. 
\end{remark}

\subsection{The semilinear terms}
\label{sec:semilinear}

In the previous section we have shown that, in spite of the quasilinear nature of the Einstein equation (\ref{eq:Einstein-equation-1}),  Theorem~\ref{thm:luk-huneau} \textit{de facto} reduces to understanding the semilinear wave map equation \eqref{eq:wave-maps-from-Einstein}. The study of oscillations in solutions to wave map equations in fact has much broader applications, as these are very widely studied systems of nonlinear hyperbolic PDEs, see e.g.\ the classical reference \cite{Shatah2000}. 
Accordingly, for $\e>0$  consider a wave map from a Lorentzian manifold $(\mc M,g_\e)$ to a \textit{fixed} Riemannian manifold $(\mc N,\mathfrak{g})$:
\begin{equation}
\Box_{g_\e} u_\e^{I} +
\Gamma_{JK}^I(u_\e)g^{-1}(\tp{d} u_\e^J,\d u_\e^K)= f_\e^I\,,
\qquad u_\e^I,f_\e^I\colon\mc{M}\to \mathbb{R}\,, \qquad  {I,J,K}\in\{1,\dots,N\}\,.
\label{eq:wave-map-system-intro}
\end{equation}
Here, $\Gamma_{JK}^I\colon \mathbb{R}\to \mathbb{R}$ are the Christoffel symbols of the Riemannian metric $\mathfrak{g}$ and depend continuously on $u^I$. 
For simplicity, we take $\mc M\subset \R^{1+n}$ and $\mc N\subset \R^N$ to be domains, and $\{x^0,x^1,\dots, x^n\}$ to be coordinates on $\mc{M}$ represented with greek indices or, if $x^0$ is excluded, roman indices; however, in light of the assumptions ensuing, this restriction is without loss of generality.  Indeed, we will assume:

\begin{hypotheses}
\label{hyp:main}
Let $\mathscr U_\e\equiv (g_\e, (u_\varepsilon^I)_{I=1}^N, (f_\varepsilon^I)_{I=1}^N)$ and $\mathscr U\equiv (g,(u^I)_{I=1}^N, (f^I)_{I=1}^N)$ satisfy:
\begin{enumerate}[topsep=3pt]
\item\label{it:hypmetrics} the eigenvalues of $g_\e$ are uniformly bounded above and away from zero and $g$ is a smooth metric such that $g_\varepsilon\to g$ in $C^0_{\tp{loc}}$, $g_\e$ is bounded in $W^{1,\infty}_\tp{loc}$, $\p_0 (g_\e)_{ij}\to \p_0 (g_\e)_{ij}$ strongly in $L^2_\tp{loc}$, and $\delta^{ij}\p_{ij}g_\e^{\alpha\beta}$ is bounded in $L^2_\tp{loc}$;
\item $u^I_\varepsilon$ converges to $u^I$ uniformly in $C^0_\tp{loc}$ and weakly in $W^{1,4}_\tp{loc}$; \label{it:hypsols}
\item  $\lVert g_\e^{\alpha\beta}-g^{\alpha\beta}\rVert_{L^\infty(K)}\lVert \p^2 (u_\e^I-u^I)\rVert_{L^4(K)}\lesssim_K 1$ for every compact $K\subset \mc{M}$; \label{it:hyprates}
\item  $f^I_\e\w f^I$ in $L^4_\tp{loc}$.\label{it:hypsource}
\end{enumerate}
\end{hypotheses}

\begin{remark}
For $n=2$, Hypotheses \ref{hyp:main}\eqref{it:hypmetrics} are implied by Hypotheses \ref{hyp:Einstein}\eqref{it:hypmetrics-Einstein}, see Appendix \ref{sec:appendix}.
\end{remark}

The convergence of $\mathscr U_\e$ assumed in  Hypotheses~\ref{hyp:main} is strong enough to  easily ensure that $(g,u^1, \dots, u^N)$ is itself a wave map. This is a substantially more difficult task under weaker hypotheses, see for instance \cite{Bahouri1999,Freire1997,Freire1998,Gerard1996} for several examples of oscillation and concentration effects in semilinear wave equations in lower regularity, albeit in  settings where  $g_\e=g$ is the Minkowski metric. On the other hand, Hypotheses~\ref{hyp:main} are weak enough that general quadratic quantities in the solutions, such as the Lagrangian density
\begin{align}  \label{eq:wave-map-energy-density}
\mathbb{L}_{\alpha\beta}[u_\e]\equiv \mathfrak{g}_{IJ}(u_\e)\p_\alpha u^I_\e\p_\beta u^J_\e\,, 
\end{align}
which features in the variational principle from which \eqref{eq:wave-map-system-intro} is derived, are not preserved in the limit as $\e\to 0$. With Theorem~\ref{thm:luk-huneau} and, specifically, \eqref{eq:Ricci-coefficient-alphabeta-simpler} in view, our goal is precisely to characterize the failure of compactness in \eqref{eq:wave-map-energy-density}, i.e.\ to identify the compactness singularities \textit{and} describe how they are propagated. For simplicity, we state our main result only for wave maps \eqref{eq:wave-map-system-intro} without sources:

\begin{arabictheorem} \label{thm:wave-map} 
Let $\Gamma^I_{JK}$ be continuous Christoffel symbols arising from a Riemannian metric 
$$\mf g=\mf g_{IJ}(y) \d y^I\otimes \d y^J\,.$$ Let $\mathscr U_\e$ be a sequence of solutions to  \eqref{eq:wave-map-system-intro} with $f_\e^I\equiv 0$. There is a Radon measure $\nu$ on $S^*(\mc M)$ such that:
\begin{enumerate}
\item {\normalfont \textbf{Limit equation.}} \label{it:thm-wave-map-limit-equation}  $\mathscr U$ is a distributional solution of \eqref{eq:wave-map-system-intro} and its Lagrangian energy density satisfies
\begin{align*}
\lim_{\e\to 0} \int_{\mc M} \mathbb{L}_{\alpha\beta}[u_\e]Y^\alpha Y^\beta \d \tp{Vol}_{g_\e} 
=
\int_\mc M \mathbb{L}_{\alpha\beta}[u]Y^\alpha Y^\beta \d \tp{Vol}_{g} 
+\int_{S^*\mc M}  \xi_\alpha \xi_\beta Y^\alpha Y^\beta  \d \nu\,, \quad \forall\, Y\in C^{\infty}_0(\mc{M})\,.
\end{align*}
\item \label{it:thm-wave-map-energy-density}
{\normalfont \textbf{Vlasov equation.}} The measure $\nu$  is a (radially averaged) measure-valued solution of a massless Vlasov equation with respect to $g$, in the sense that properties (\ref{it:thm-cor-luk-huneau-localization}) and (\ref{it:thm-cor-luk-huneau-propagation}) of Theorem \ref{thm:luk-huneau} hold.
\end{enumerate}
\end{arabictheorem}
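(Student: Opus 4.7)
The plan is to follow the three-part strategy announced in the introduction: establish weak continuity of the wave-map nonlinearity to pass to the limit in the equation, construct a microlocal defect measure $\nu$ encoding the oscillations in $du_\e-du$, and then propagate $\nu$ along the null geodesic flow of the limit metric $g$ by a commutator argument. The last step is the heart of the proof because the true wave operator $\Box_{g_\e}$ is itself oscillating.

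\textbf{Limit equation and construction of $\nu$.} First I would verify that $\mathscr U$ distributionally solves \eqref{eq:wave-map-system-intro}. The principal part passes to the limit using $g_\e\to g$ in $C^0_\tp{loc}$ together with $g_\e$ bounded in $W^{1,\infty}_\tp{loc}$. For the semilinear term, $\Gamma^I_{JK}(u_\e)\to \Gamma^I_{JK}(u)$ uniformly since $u_\e\to u$ in $C^0_\tp{loc}$, so the issue is passing to the limit in the null quadratic form $g_\e^{-1}(\tp{d}u_\e^J,\tp{d}u_\e^K)$. Because each $u_\e^I$ solves a wave equation, this is the setting of Murat--Tartar bilinear compensated compactness, for which the paper supplies a self-contained proof in Section \ref{sec:linear-wave-non-oscillating}. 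The same framework identifies the residual quadratic defect, and I would define
\begin{equation*}
\int_{S^*\mc M}\xi_\alpha\xi_\beta Y^\alpha Y^\beta\,\d\nu \equiv \lim_{\e\to 0}\int_\mc M\mathbb{L}_{\alpha\beta}[u_\e]Y^\alpha Y^\beta\,\d\tp{Vol}_{g_\e} - \int_\mc M\mathbb{L}_{\alpha\beta}[u]Y^\alpha Y^\beta\,\d\tp{Vol}_{g}
\end{equation*}
as the H-measure (Tartar) / microlocal defect measure (G\'erard) associated to $v_\e\equiv u_\e-u\wto 0$ in $W^{1,4}_\tp{loc}$, with the continuous weight $\mf g_{IJ}(u)$. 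Non-negativity of $\nu$ follows from the positive-definiteness of $\mf g$. Its support on the null cone of $g$ (property \ref{it:thm-cor-luk-huneau-localization}) is obtained by testing the linearized equation for $v_\e$ microlocally against symbols supported in the elliptic region $\{g^{\alpha\beta}\xi_\alpha\xi_\beta\neq 0\}$: there $v_\e$ is microlocally smoothed and hence strongly compact in $L^2_\tp{loc}$.

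\textbf{Propagation (the main obstacle).} To prove \eqref{eq:thm-cor-luk-huneau-Vlasov} I would fix a test symbol $\tilde a$ homogeneous of degree $1$ in $\xi$, pick a self-adjoint pseudodifferential operator $A$ with that principal symbol, and analyze $\langle [\Box_g,A]v_\e, v_\e\rangle_{L^2}$ in two ways. Formally, this bracket produces the Poisson-bracket symbol $\{g^{\alpha\beta}\xi_\alpha\xi_\beta,\tilde a\}$ which, when tested against $\nu$, gives exactly the Vlasov transport on the left-hand side of \eqref{eq:thm-cor-luk-huneau-Vlasov}. The difficulty is that $v_\e$ does not solve $\Box_g v_\e=0$; rewriting the wave map equation with $\Box_g$ on the left, we find
\begin{equation*}
\Box_g v_\e = (\Box_g-\Box_{g_\e})u_\e + \Gamma^I_{JK}(u)g^{-1}(\tp{d}u,\tp{d}u) - \Gamma^I_{JK}(u_\e)g_\e^{-1}(\tp{d}u_\e^J,\tp{d}u_\e^K),
\end{equation*}
so one must show that the oscillating metric source $(\Box_g-\Box_{g_\e})u_\e$ contributes nothing after pairing with $[A,\,\cdot\,]v_\e$. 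Following the plan described in the introduction, I would integrate by parts carefully, exploiting the odd parity of the Vlasov transport symbol to symmetrize the worst terms, and then perform a Littlewood--Paley frequency decomposition: on low frequencies of $u_\e$ one uses smallness of $g_\e-g$ in $L^\infty$, while on high frequencies one uses Hypothesis~\ref{hyp:main}(\ref{it:hyprates}) --- the uniform bound on $\|g_\e^{\alpha\beta}-g^{\alpha\beta}\|_{L^\infty}\|\p^2(u_\e-u)\|_{L^4}$ --- together with the refined convergences in Hypothesis~\ref{hyp:main}(\ref{it:hypmetrics}), in particular $\p_0 g_\e\to \p_0 g$ strongly in $L^2_\tp{loc}$ and $\delta^{ij}\p_{ij}g_\e^{\alpha\beta}$ uniformly $L^2_\tp{loc}$-bounded. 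Closing this borderline commutator estimate is the hardest part of the argument; once done, \eqref{eq:thm-cor-luk-huneau-Vlasov} follows for arbitrary $\tilde a$, completing the proof.
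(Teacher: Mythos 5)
Your outline of the limit equation, the construction of $\nu$, the support property, and the treatment of the metric-oscillation source $(\Box_g-\Box_{g_\e})u_\e$ is consistent with the paper's argument (the paper's frequency decomposition is an $\e$-dependent three-way split of the frequencies of $h_\e^{\a\beta}=g_\e^{\a\beta}-g^{\a\beta}$ rather than a Littlewood--Paley decomposition of $u_\e$, but the ingredients you invoke are the right ones). However, there is a genuine gap in the propagation step: after rewriting $\Box_g v_\e$ you only analyze the metric source and never address the pairing of $A\,\p v_\e$ with the semilinear difference $\Gamma^L_{JK}(u_\e)g_\e^{-1}(\tp{d}u_\e^J,\tp{d}u_\e^K)-\Gamma^L_{JK}(u)g^{-1}(\tp{d}u^J,\tp{d}u^K)$. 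This term does \emph{not} contribute nothing. By the trilinear compensated compactness lemma (which is weakly continuous \emph{only at zero}), its contribution is a nontrivial coupling $\langle g^{\a\beta}[\Gamma^L_{JK}(u)+\Gamma^L_{KJ}(u)]\p_\beta u^K\,\tilde\nu^{IJ}_{\gamma\a},\sigma^0(A)\rangle$ between the components of the matrix-valued H-measure $\nu^{IJ}$; for a general quadratic quantity this coupling does not close into a transport equation.

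The missing idea is the Lagrangian cancellation: one must work with the matrix of measures $\nu^{IJ}$, derive the propagation identity for each $\nu^{IJ}$ with test symbol $\tilde a\,\mf g_{IJ}(u)$ (which produces an extra term $g^{\a\beta}\xi_\a\p_{x^\beta}\mf g_{IJ}\,\nu^{IJ}$ on the left because $\mf g_{IJ}(u)$ is $x$-dependent), and then observe that contracting the semilinear contribution with $\mf g_{IL}$ and using the formula $\mf g_{IL}\Gamma^L_{JK}=\tfrac12(\mf g_{KI,J}+\mf g_{JI,K}-\mf g_{JK,I})$ together with the Hermitian symmetry $\tilde\nu^{JI}=(\tilde\nu^{IJ})^*$ yields exactly $-g^{\a\beta}\p_{x^\beta}\mf g_{IJ}\,\tilde\nu^{IJ}_{\gamma\a}$ plus a purely imaginary remainder. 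The real parts cancel, and only then does $\nu=\mf g_{IJ}\nu^{IJ}$ satisfy the homogeneous Vlasov equation. Your single-measure formulation hides both the coupling and the cancellation, so as written the Vlasov equation you would obtain carries an unaccounted-for source. You should also note that in the extension of the commutator argument to the system, the symmetrizations in $(\a,\beta)$ used for the metric source rely on summing against the symmetric weight $\mf g_{IL}$, so the two parts of the argument are not independent.
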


Strictly speaking, in Theorem~\ref{thm:wave-map}, as well as in Theorem~\ref{thm:linear-wave-oscillating} below, one may need to pass to a subsequence in $\bm{g}_\e$. In fact, throughout the paper we always work modulo subsequences.
We also note that the case $f_\e^I\not\equiv 0$ is very similar: \eqref{it:thm-wave-map-limit-equation} still holds, and in \eqref{it:thm-wave-map-energy-density} the massless Vlasov equation becomes inhomogeneous with source related to the failure of compactness of $(f_\e^I)_{\e>0}$.

The measure in Theorem \ref{thm:wave-map} is essentially an \textit{H-measure} induced by the sequence $\mathscr U_\e$, see Section \ref{sec:prelims-H-measures-CC}. H-measures, often known as \textit{microlocal defect measures} in the literature, were introduced independently by Gérard \cite{Gerard1991} and Tartar \cite{Tartar1990}. H-measures are ideal tools for proving Theorem \ref{thm:wave-map}: like other popular tools to study the failure of strong convergence, such as Young measures, they can be used to compute the difference  between $\mb L_{\alpha\beta}[u]$ and $\lim_{\e\to 0}\mb L_{\alpha\beta}[u_\e]$, but crucially they also capture the way in which this difference propagates.
We refer the reader to  \cite{Rindler2015} for a comparison between  Young measures and H-measures. 

Any sequence $\lp( (\p_{0} u^I_\e, \p_1 u^I_\e, \dots, \p_n u^I_\e, f^I_\e)_{I=1}^N\rp)_\e
$ bounded in $L^2_\tp{loc}$ induces an H-measure
\begin{equation}
\label{eq:defHmeasureintro}
\lp(
\begin{bmatrix}
\tilde \nu^{IJ} & \tilde \lambda^{IJ} \\
(\tilde \lambda^{IJ})^* & \mu^{IJ}
\end{bmatrix}
\rp)_{I,J=1}^N
\,,
\end{equation}
which is valued in $N\times N$ block-matrices. The measures $\tilde \nu^{IJ}$ takes values in $(n+1)\times (n+1)$ matrices, while the measures $\mu^{IJ}$ are scalar; they are essentially computed by respectively evaluating the limits
$$\lim_{\e\to 0}\langle A (\tp d u_\e^I-\tp d u^I),
\tp d u_\e^J-\d u^J\rangle \qquad \tp{ and }\qquad  
\lim_{\e\to 0} \langle B (f_\e^I-f^I), f_\e^J-f^J\rangle\,.$$ 
Here and throughout $\langle\cdot,\cdot\rangle$ denotes the $L^2$-inner product with respect to $\tp d x$, while $A$ and $B$ are zeroth order pseudo-differential operators. Finally, the measures $\tilde \lambda^{IJ}$ capture the interaction between $\tp d u_\e^I$ and $f_\e^J$.

Our strategy to prove Theorem \ref{thm:wave-map} is to rewrite \eqref{eq:wave-map-system-intro} as
\begin{align*}
\Box_{g_\e} u_\e^{I} =Q_\e^I+f_\e^I\,, \qquad Q_\e^I\equiv -\Gamma_{JK}^I(u_\e)g^{-1}(\tp{d} u_\e^J,\d u_\e^K)\,,
\end{align*}
and to interpret the semilinearities in the wave map equations as source terms for a linear wave equation on an oscillating background. As will become clearer in the next subsection, source terms contribute to Theorem \ref{thm:wave-map} only through the H-measure $\tilde \lambda^{IJ}$. Hence, our goal is to compute
\begin{align}
\lim_{\e\to 0} \langle A \p(u_\e^I-u^I), Q_\e^L-Q^L\rangle\,, \label{eq:intro-lambda-semilinearities}
\end{align}
where $Q^L$ denotes the weak limit of $Q_\e^L$ in $L^2_\tp{loc}$. Note that the uniform convergence of $u_\e^I$ ensures that, in $Q_\e^L$, only the null forms $g^{-1}(\tp d u_\e^J,\tp d u_\e^K)$ are important. 
The null structure of the wave map nonlinearities translates into a div-curl structure 
both for bilinear and trilinear terms:
\begin{lemma}[Murat and Tartar \cite{Murat1978, Tartar2005}] \label{lemma:intro-CC} Under Hypotheses~\ref{hyp:main}, we have:
\begin{enumerate}
\item\label{it:bilinCC} $Q^L\equiv \tp{w-}\lim_{\e\to 0} Q_\e^L=-\Gamma_{JK}^I(u)g^{-1}(\tp{d} u^J,\d u^K)$;
\item\label{it:trilinCC} if $u=0$ then $\tp{w-}\lim_{\e\to 0} \p u_\e^I\, g^{-1}(\tp d u_\e^J,\d u_\e^K)=0$, where $\p$ denotes an arbitrary partial derivative.
\end{enumerate}
\end{lemma}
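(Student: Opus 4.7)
For (a), the plan is to apply the classical div-curl lemma of Murat--Tartar to the pair of 1-forms $E_\e := \tp d u_\e^K$ and $F_\e := g_\e^{\alpha\beta}\,\p_\beta u_\e^J\,\tp d x^\alpha$. Being exact, $E_\e$ is closed. The divergence $\p_\alpha F_\e^\alpha = g_\e^{\alpha\beta}\,\p_{\alpha\beta} u_\e^J + \p_\alpha g_\e^{\alpha\beta}\,\p_\beta u_\e^J$ is bounded in $L^2_\tp{loc}$: the first summand differs from $\Box_{g_\e} u_\e^J$ only by a first-order correction controlled by the $W^{1,\infty}_\tp{loc}$ bound on $g_\e$ and the $L^4_\tp{loc}$ bound on $\p u_\e$, whereas the wave map equation gives $\Box_{g_\e} u_\e^J = -\Gamma^J_{AB}(u_\e)\,g_\e^{-1}(\tp d u_\e^A,\tp d u_\e^B) + f_\e^J \in L^2_\tp{loc}$. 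Compactness of $\p_\alpha F_\e^\alpha$ in $H^{-1}_\tp{loc}$ then allows div-curl to yield $g_\e^{-1}(\tp d u_\e^J,\tp d u_\e^K)\wstar g^{-1}(\tp d u^J,\tp d u^K)$ distributionally, and multiplying by the uniformly convergent coefficients $\Gamma^I_{JK}(u_\e)\to\Gamma^I_{JK}(u)$ concludes (a).

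For (b), assume $u=0$, so $u_\e^I\to 0$ in $C^0_\tp{loc}$ for every $I$. Fix $\varphi\in C_c^\infty(\mc M)$ and set $T^{IJK}_\e := \int \varphi\,\p_\gamma u_\e^I\,g_\e^{-1}(\tp d u_\e^J,\tp d u_\e^K)\,\tp d x$. The strategy is a cascade of integrations by parts exploiting the uniform smallness of the $u_\e^\bullet$. First, I would IBP to move $\p_\alpha$ off $u_\e^J$: the four resulting terms each carry a factor of $u_\e^J$, and three of them---those with derivatives hitting $\varphi$ or $g_\e$, and the one with $g_\e^{\alpha\beta}\,\p_{\alpha\beta}u_\e^K$, which is $L^2_\tp{loc}$-bounded by the wave map equation---vanish by Hölder since $u_\e^J\to 0$ uniformly. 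The remaining term carries the uncontrolled $\p_{\alpha\gamma}u_\e^I$; I would IBP once more in $\gamma$, and when $\p_{\gamma\beta}u_\e^K$ resurfaces, IBP a third time in $\beta$. Careful bookkeeping shows that every contribution except two still carries a $u_\e^\bullet\to 0$ multiplier and vanishes, while the two survivors give the algebraic identity $T^{IJK} = T^{JIK} - T^{KIJ}$ in the limit. Combined with the manifest symmetry $T^{IJK} = T^{IKJ}$ applied also to $T^{IKJ}$, this forces $T^{JIK} = T^{KIJ}$, hence $T^{IJK} = 0$.

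The main obstacle is organising this cascade so that every uncontrolled $\p^2 u_\e^\bullet$ is either contracted against $g_\e^{\alpha\beta}$---so the wave map equation supplies an $L^2_\tp{loc}$ bound---or eliminated by a further IBP producing a fresh $u_\e^\bullet\to 0$ multiplier. Once this bookkeeping is in place, Lemma~\ref{lemma:intro-CC} rests purely on the div-curl lemma and elementary Hölder estimates.
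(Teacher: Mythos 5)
Your proposal is correct. Part (\ref{it:bilinCC}) follows essentially the paper's route: Lemma~\ref{lemma:compensatedcompactnessWave} is exactly the div-curl lemma (Theorem~\ref{thm:p-q-divcurl}) applied to $\tp d u_\e$ paired with the momentum form, phrased there via the Hodge star and a polarization identity; your variant, taking the divergence of $g_\e^{\alpha\beta}\p_\beta u_\e^J\,\tp d x^\alpha$ directly, is the same idea and has the minor advantage of invoking only $\Box_{g_\e}u_\e^J$ rather than $\Box_g u_\e^J$.

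Part (\ref{it:trilinCC}) is where you genuinely diverge. The paper (Lemma~\ref{lemma:trilinear}, following Tartar) observes that the current $J^X_\a[u^1_\e,u^2_\e]$ has divergence compact in $W^{-1,3/2}_\tp{loc}$ by the energy identity \eqref{eq:energyidentity}, that $2J^X_\a[u^1,u^2]\p_\beta u^3 g^{\a\beta}$ equals the cyclic combination $Xu^1\,g^{-1}(\tp d u^2,\tp d u^3)+Xu^2\,g^{-1}(\tp d u^1,\tp d u^3)-Xu^3\,g^{-1}(\tp d u^1,\tp d u^2)$, and then closes via the $L^{3/2}$--$L^3$ div-curl lemma. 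You instead derive, by three integrations by parts, the relation $T^{IJK}_\e=T^{JIK}_\e-T^{KIJ}_\e+o(1)$, which combined with the manifest symmetry $T^{IJK}_\e=T^{IKJ}_\e$ gives $2T^{IJK}_\e=o(1)$ directly (no subsequence extraction is even needed). I checked the bookkeeping: every discarded term carries either a $u_\e^\bullet\to 0$ factor against an $L^1_\tp{loc}$-bounded remainder, or the contraction $g_\e^{\alpha\beta}\p^2_{\alpha\beta}u_\e^\bullet$, which is $L^2_\tp{loc}$-bounded by the equation; and the cyclic identity you obtain is morally the same algebraic structure as the paper's identity for $J^X$. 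The trade-offs: your argument is more elementary (no $L^p$ compensated compactness, no energy currents), but it uses the uniform convergence $u_\e\to 0$ in $C^0_\tp{loc}$ in an essential way and requires $\p^2 u_\e\in L^p_\tp{loc}$ for each fixed $\e$ to justify the intermediate integrations by parts, whereas the paper's Lemma~\ref{lemma:trilinear} needs only $u^I_\e\w 0$ in $W^{1,3}_\tp{loc}$ and $\Box_g u^I_\e$ bounded in $L^3_\tp{loc}$, and holds for a general vector field $X$ in place of $\p_\gamma$. Both hypotheses packages are available under Hypotheses~\ref{hyp:main}, so your proof is a valid, self-contained alternative for the purposes of this lemma, though strictly less general than the version the paper actually reuses.
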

We reprove this classical result in Section \ref{sec:wave-CC} below using the geometric version of the div-curl lemma from \cite{Robbin1987} and the usual geometric framework of energy identities for covariant wave equations. We also alert the reader that \eqref{it:trilinCC} is referred to as \textit{three-wave compensated compactness} in \cite{Huneau2019}.

When $u= 0$, Lemma~\ref{lemma:intro-CC} easily shows that \eqref{eq:intro-lambda-semilinearities} vanishes. However, this is not the case in general, as the trilinear quantity in \eqref{it:trilinCC} is weakly continuous \textit{only at zero}. That such quantities even exist is only possible because $\Box_g$, thought of as a first-order operator acting on $\p u$, does not have constant rank, c.f.\ \cite{Guerra2019} and Remark \ref{remark:constantrank}. The upshot is that in the general case $u^I\not\equiv 0$ the nonlinearities create a \textit{coupling} between the behavior of the measures $\tilde \nu^{IJ}$ and $\tilde \nu^{KL}$, so the lack of compactness in general quadratic quantities associated to wave maps does not admit a simple characterization. 

For the particular quantity we are interested in, the Lagrangian density \eqref{eq:wave-map-energy-density}, something surprising occurs: the couplings between the different measures are added up so as to precisely cancel! Hence, through the classical Lemma~\ref{lemma:intro-CC}, the nonlinear terms can be easily shown not to contribute to the failure of compactness of $\mathbb{L}_{\alpha\beta}[u_\e]$ nor to its propagation. We conclude that, to establish Theorem~\ref{thm:wave-map}, it is enough to characterize the failure of compactness in quadratic quantities associated to a \textit{linear} scalar wave equation with oscillating coefficients.

\subsection{The linear terms}
\label{sec:linear}

We have reduced the proofs of Theorems~\ref{thm:luk-huneau} and \ref{thm:wave-map} to understanding oscillations in a scalar linear wave equation with respect to oscillating background metrics. In other words, we take $N=1$ in Hypotheses~\ref{hyp:main} and hence, for simplicity, we drop the superscripts. 

\begin{arabictheorem} \label{thm:linear-wave-oscillating} 
Let $\mathscr U_\e=(g_\e,u_\e,f_\e)$ be a sequence satisfying Hypotheses \ref{hyp:main} and such that $\Box_{g_\e} u_\e = f_\e.$
\begin{enumerate}
\item {\normalfont\textbf{Limit equation.}} The triple $\mathscr U=(g,u,f)$ is a solution of $\Box_g u =f$. \label{it:lin-wave-eq}
\item {\normalfont \textbf{Vlasov equation.}} \label{it:thm-lin-wave-energy-density}
There are  Radon measures $\nu$, $\lambda$ such that $\tilde \nu_{\a\beta}=\xi_\alpha\xi_\beta\nu$, $\tilde{\lambda}_{\gamma}=\xi_\gamma \lambda$. Moreover, $\nu$ is a (radially averaged) measure-valued solution of an inhomogeneous massless Vlasov equation, in the sense that property (\ref{it:thm-cor-luk-huneau-localization}) of Theorem \ref{thm:luk-huneau} holds, and 
for all $ \tilde a\in C_0^\infty (S^*\mc{M})$, extended as a positively 1-homogeneous function to $T^*\mc M\exc\{0\}$, the measure $\nu$ satisfies 
\begin{gather}
\int_{S^*\mc M} \lp[g^{\alpha\beta}\xi_\alpha\p_{x^\beta}\tilde a-\frac12\p_{x^\mu}g^{\alpha\beta}\xi_\alpha\xi_\beta\p_{\xi_\mu}\tilde a\rp] \tp d\nu 
= -\int_{S^*\mc M} \tilde a\, \tp{d}(\Re \lambda)
\,.
\label{eq:propagationprop}
\end{gather}
\end{enumerate}
\end{arabictheorem}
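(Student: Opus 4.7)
The strategy is to rewrite the equation on the fixed limit background as
$$\Box_g u_\e=f_\e-\mc E_\e,\qquad \mc E_\e:=(\Box_{g_\e}-\Box_g)u_\e\,,$$
so that the analysis reduces to the standard Gérard--Tartar propagation argument for $\Box_g$, with $\mc E_\e$ acting as an auxiliary source whose contribution to the H-measure must be shown to vanish. Hypothesis~\ref{hyp:main}\eqref{it:hyprates} ensures $\mc E_\e$ is bounded in $L^4_\tp{loc}$, since its leading part $(g_\e^{\a\beta}-g^{\a\beta})\p_\a\p_\beta u_\e$ has this property by assumption and the subleading first-order terms have $L^\infty_\tp{loc}$-bounded coefficients.

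Part \eqref{it:lin-wave-eq} follows from passing to the limit in the weak formulation of $\Box_{g_\e}u_\e=f_\e$, using uniform convergence of $g_\e$ and the weak convergences $\p u_\e\w \p u$ and $f_\e\w f$ in $L^4_\tp{loc}$. For the structural form of the H-measure, $V_\e^\a:=\p_\a(u_\e-u)$ is curl-free, so Tartar's first-order localization principle yields the constraint $\xi_\a\tilde\nu_{\beta\gamma}=\xi_\beta\tilde\nu_{\a\gamma}$, which together with the Hermitian symmetry and positivity of $\tilde\nu$ forces $\tilde\nu_{\a\beta}=\xi_\a\xi_\beta\nu$ for a scalar nonnegative Radon measure $\nu$ on $S^*\mc M$; applied to the $V_\e$--$f_\e$ cross block it produces $\tilde\lambda_\gamma=\xi_\gamma\lambda$. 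For the support of $\nu$ on the null cone of $g$, note that $\Box_g(u_\e-u)$ is bounded in $L^4_\tp{loc}\subset L^2_\tp{loc}$, which via the compact embedding $L^2_\tp{loc}\hookrightarrow H^{-1}_\tp{loc}$ implies $\Box_g(u_\e-u)\to 0$ strongly in $H^{-1}_\tp{loc}$; Tartar's second-order localization then localizes $\nu$ on the characteristic set of $\Box_g$.

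For the propagation law, fix $\tilde a\in C_0^\infty(S^*\mc M)$ and its $1$-homogeneous extension, and let $A$ be a self-adjoint pseudo\-differential operator with principal symbol $\tilde a$, suitably normalized so that $\langle Au_\e,\Box_g u_\e\rangle$ is a well-defined $L^2_\tp{loc}$-pairing. Computing the commutator $\langle i[\Box_g,A]u_\e,u_\e\rangle$ and substituting $\Box_g u_\e=f_\e-\mc E_\e$ yields, in the limit $\e\to 0$ and after taking real parts, the identity
$$\int_{S^*\mc M}\left[g^{\a\beta}\xi_\a\p_{x^\beta}\tilde a-\tfrac12\p_{x^\mu}g^{\a\beta}\xi_\a\xi_\beta\p_{\xi_\mu}\tilde a\right]\d\nu+\int_{S^*\mc M}\tilde a\,\d(\Re\lambda) = \lim_{\e\to 0}\Re\langle Au_\e,\mc E_\e\rangle\,,$$
since the principal symbol of $i[\Box_g,A]$ reproduces the Vlasov transport operator on $\tilde a$, the $f_\e$-term generates the source associated with $\Re\lambda$, and the $\mc E_\e$-term remains.

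The main obstacle is to show that $\lim_{\e\to 0}\Re\langle Au_\e,\mc E_\e\rangle=0$. Since $Au_\e$ and $\mc E_\e$ are merely $L^2_\tp{loc}$- and $L^4_\tp{loc}$-bounded without any strong convergence, a direct estimate is insufficient. The strategy is to substitute the explicit form $\mc E_\e=(g_\e^{\a\beta}-g^{\a\beta})\p_\a\p_\beta u_\e+\tp{(LOT)}$, integrate by parts to redistribute one derivative onto $g_\e-g$ and one onto $Au_\e$, and exploit the parity $\xi\mapsto-\xi$ of the bicharacteristic symbol to cancel the symmetric part of the resulting bilinear form in $\p u_\e$ upon taking real parts. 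The antisymmetric remainder is then controlled by a dyadic Littlewood--Paley decomposition of $g_\e-g$ in Fourier space, combining the elliptic Hessian bound $\delta^{ij}\p_{ij}g_\e^{\a\beta}\in L^2_\tp{loc}$ from Hypothesis~\ref{hyp:main}\eqref{it:hypmetrics} (which pins the low/moderate frequencies of $g_\e-g$) with the rate in Hypothesis~\ref{hyp:main}\eqref{it:hyprates} (which trades the high-frequency tail of $g_\e-g$ against $\p^2 u_\e$).
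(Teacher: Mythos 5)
Your overall architecture coincides with the paper's: you rewrite the equation on the fixed background $g$, obtain the structure and support of $\tilde\nu,\tilde\lambda$ from the localization lemma (using that $\Box_g u_\e=f_\e-\mc E_\e$ is bounded in $L^2_\tp{loc}$), derive the propagation identity modulo the error $\lim_{\e\to0}\Re\langle Au_\e,\mc E_\e\rangle$, and reduce that error by integration by parts and parity to a commutator of $A$ with $g_\e^{\a\beta}-g^{\a\beta}$. Up to that point the argument is sound and matches the paper.

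The gap is in the final frequency analysis, which is the heart of the proof. An isotropic dyadic split of $g_\e-g$ into low and high $|\xi|$ does not match what the hypotheses control. The bound on $\delta^{ij}\p^2_{ij}g_\e^{\a\beta}$ handles the regime where the \emph{spatial} frequency of $g_\e-g$ is large (there the projection of $h_\e^{\a\beta}\equiv g_\e^{\a\beta}-g^{\a\beta}$ gains a factor $|\rho_{\rm spa}|^{-2}$ in $L^2$); it gives nothing when the frequency is large but essentially \emph{time-like}, i.e.\ $|\rho_0|\gg1$ with $|\rho_{\rm spa}|$ comparatively small. In that regime your proposed mechanism --- trading the high-frequency tail of $g_\e-g$ against $\p^2u_\e$ via Hypothesis~\ref{hyp:main}\eqref{it:hyprates} --- only yields $\lVert h_\e\rVert_{L^\infty}\lVert\p u_\e\rVert_{L^4}\lVert\p^2u_\e\rVert_{L^4}=O(1)$, i.e.\ boundedness rather than convergence to zero (this is exactly the borderline nature of the rate assumption). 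Closing this regime requires two further ingredients absent from your sketch: (i) the operator $e_0=\p_0-\beta^i\p_i$ is elliptic on the time-dominated frequencies, so one can write $h_\e^{\a\beta}=g^{00}e_0\bigl(Qh_\e^{\a\beta}\bigr)$ with $Q\in\Psi^{-1}$, gaining a factor comparable to the inverse time frequency in $\lVert Qh_\e^{\a\beta}\rVert_{L^2}$, and then integrate the extra $e_0$ by parts onto $u_\e$; and (ii) the resulting $g^{00}e_0^2u_\e$ must be replaced by $\Box_gu_\e-\tilde g^{ij}\p^2_{ij}u_\e$ modulo compact terms, exploiting the uniform $L^4_\tp{loc}$ bound on $\Box_gu_\e$ together with a symmetrization in $(\a,\beta)$ and $(i,j)$ that kills the term carrying four derivatives in total on $u_\e$. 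Without the anisotropic (space-versus-time) decomposition and this use of the equation itself, the time-dominated high frequencies are not controlled and the limit $\Re\langle Au_\e,\mc E_\e\rangle\to0$ does not follow.
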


\begin{remark}[Initial value formulation] The transport equation \eqref{eq:propagationprop} in Theorem~\ref{thm:linear-wave-oscillating}\eqref{it:thm-lin-wave-energy-density} naturally inherits a suitable set of initial conditions in terms of initial conditions for $\Box_{g_\e}u_\e=f_\e$, see \cite[Section 3.4]{Tartar1990} as well as \cite{Francfort1992} for a detailed study when $g_\e=g$ is fixed. In other words, the failure of compactness seen in the evolution may be characterized in terms of failure of compactness of the initial data. 
\end{remark}

\begin{remark}[Regularity of $g$] It is natural to ask whether $W^{1,\infty}_\tp{loc}$-bounds on $g_\e$ in can be weakened to $W^{1,q}_\tp{loc}$-bounds, for some $q<\infty$. This would affect the expected regularity of $g$, which would drop below $C^1$. Such a level of regularity seems problematic: indeed, the integrand in the left-hand side of \eqref{eq:propagationprop} is the Poisson bracket between the symbol of $\Box_g$ and $\tilde{a}$, which in turn is the symbol of a commutator between the corresponding pseudo-differential operators that ought to be at least bounded, c.f.\  
Remark \ref{remark:smoothness}.
\end{remark}

Let us give an outline of the proof of Theorem~\ref{thm:linear-wave-oscillating}. For  a  \textit{fixed} Lorentzian metric, a full characterization of the H-measure associated to the linear wave equation is already essentially contained in Tartar's original paper \cite{Tartar1990}, as well as in \cite{Francfort1992}. For the sake of completeness, in Section~\ref{sec:linear-wave-non-oscillating}, we extend these proofs to general covariant wave equations, relying on a standard geometric version of the energy identity, see e.g.\ \cite{Alinhac2010}. 

The case of \textit{oscillating} metrics $g_\e$, which takes up the entirety of Section~\ref{sec:linear-wave-oscillating} here, is much more involved, as predicted by Francfort--Murat \cite{Francfort1992}; it is, nonetheless, very natural from the point of view of Homogenization Theory \cite{Cioranescu1999}. An obvious additional difficulty of this case is that it is not clear what  is the appropriate notion of convergence for the metrics. Though this is an interesting problem, we do not investigate it here: it turns out that Hypotheses~\ref{hyp:main} provide sets of convergence conditions under which the oscillations of $g_\e$ do not contribute to the propagation of non-compactness. With stronger conditions on the rates of convergence, as mentioned above, this remarkable fact is one of the key observations of Huneau--Luk \cite{Huneau2019}, and it served as inspiration for our work.

Our strategy for dealing with the oscillations of $g_\e$ is to reduce to the case where $g$ is fixed, so we write
$$\Box_{g_\e} u_\e= f_\e\qquad
\implies 
\qquad \Box_g u_\e =  -H_\e+f_\e,\quad \tp{where } H_\e\equiv (\Box_g- \Box_{g_\e})u_\e \,.$$
Determining the contribution of the oscillations of $g_\e$ to the Vlasov equation amounts to calculating 
$$\lim_{\e\to 0}\langle H_\e, A e_0 (u_\e-u)\rangle,\qquad \tp{where } e_0\equiv \p_0+\frac{g^{0i}}{g^{00}}\p_i.$$
Here $A\in \Psi^0_c$ is an arbitrary pseudo-differential operator corresponding to the test function $\tilde{a}$ in \eqref{eq:propagationprop} and the upper indices denote components of the inverse metrics. A parity argument shows that we can assume that the symbol of $A$ is real and even; then, by a careful integration by parts argument, we obtain
\begin{equation}\lim_{\e\to 0} \langle H_\e, A e_0 (u_\e-u)\rangle= \frac 1 2\int_{\R^{1+n}}\p_\alpha (u_\e-u) [g_\e^{\alpha\beta}-g^{\alpha\beta}, A]\p_\beta e_0 (u_\e-u)\d x\,,
\label{eq:intro-commutator}\end{equation}
see Lemma \ref{lemma:commutator-reduction}.
By the Calderón commutator estimate, if $g_\e\to g$ strongly in $W^{1,\infty}_{\tp{loc}}$, then \eqref{eq:intro-commutator} vanishes in the limit. However, even if all derivatives but one converge strongly, this simple proof fails, as the Calderón commutator estimate requires Lipschitz bounds. This is the case in  Hypotheses~\ref{hyp:main}: the assumptions imply that spatial derivatives of $g_\e$ convergence strongly, with $\e_0g_\e$ converging only weakly. 

As is common in compensated compactness, see e.g.\ \cite[Theorem 5.3.2]{Hormander1997}, we examine the failure of compactness in $e_0 g_\e$ in Fourier space, and we denote by  $\Lambda$ the region where the symbol of $e_0$ vanishes. This naturally induces a partition of Fourier space as follows, see Figure~\ref{fig:frequency-space}.

\begin{figure}[htbp]
\centering
\includegraphics[scale=1]{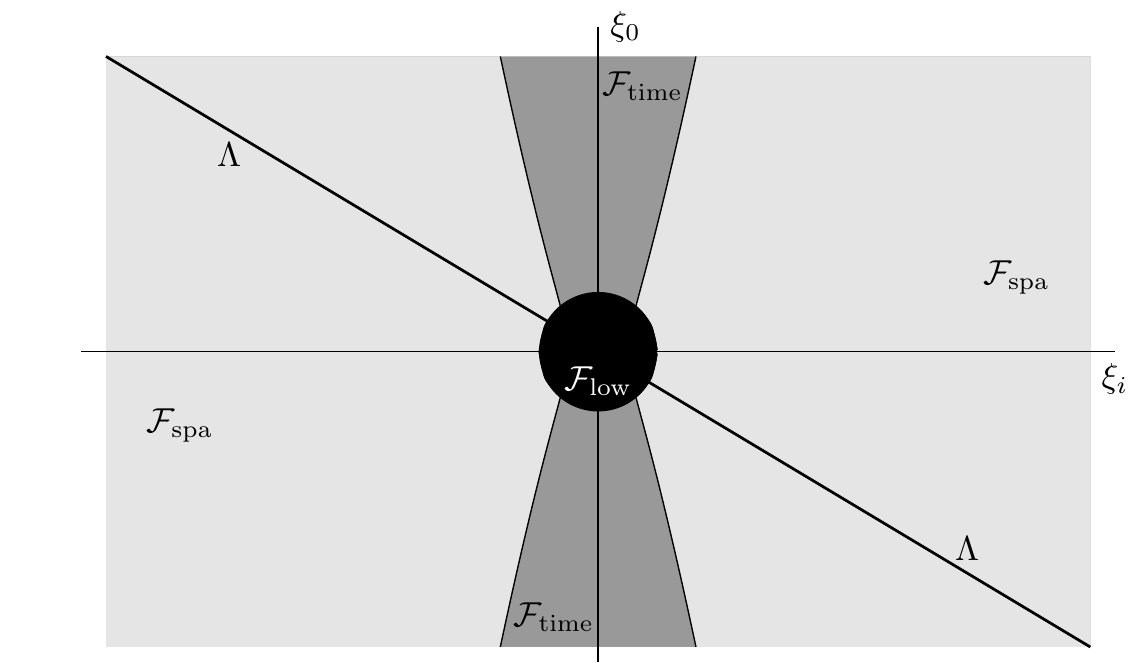}
\caption{The frequency space picture.}
\label{fig:frequency-space}
\end{figure}

\vspace{-0.66\baselineskip}
\paragraph{Low frequencies ($\mc{F}_{\rm low}$).} In bounded regions of frequency space, $W^{1,2}_\tp{loc}$ and $L^2_{\tp{loc}}$ norms are comparable, hence $e_0 g_\e$  is, in fact, \textit{compact} in this range. Indeed, as a general principle, failure of compactness is a high-frequency phenomenon.

\vspace{-0.66\baselineskip}
\paragraph{High frequencies close to $\Lambda$ ($\mc{F}_{\rm space}$).} In this region, $e_0$ is not invertible, so the fact that $u_\e$ appear in the commutator does not help. We instead  \textit{compensate} for the \textit{lack of compactness} in $e_0 g_\e$ by using the fact that the spatial laplacians of $g_\e$ are bounded in $L^2_\tp{loc}$, see Hypotheses~\ref{hyp:main}\eqref{it:hypmetrics}. We alert the reader that this, as well as the argument laid out in the next frequency regime, are referred to as \textit{elliptic-wave compensated compactness}  in \cite{Huneau2019}.

\vspace{-0.66\baselineskip}
\paragraph{High frequencies away from $\Lambda$ ($\mc{F}_{\rm time}$).} 
This is the most difficult regime and, in some sense, the heart of the proof. To illustrate our strategy, let us consider the simple case where the limit metric $g$ is the Minkowski metric and $A$ is a multiplier, i.e.\ its symbol is merely a function $m(\xi)$ for $\xi\in S^*\mc{M}$ which is 0-homogeneous and even. Let us write $w_\e\equiv u_\e-u$ and $h_\e^{\alpha\beta}\equiv g_\e^{\alpha\beta}-g^{\alpha\beta}$. Then, from Plancherel and the parity of $m$, \eqref{eq:intro-commutator} becomes
\begin{align}
\frac12\int\p_\alpha w_\e [h_\e^{\alpha\beta}, A]\p_\beta e_0 w_\e\d x = \frac{i}{4} \iint \xi_i\eta^i(\xi_0+\eta_0)\widehat{h_\e^{\alpha\beta}}(\xi-\eta) \widehat{w_\e}(-\xi)\widehat{w_\e}(\eta)[m(\xi)-m(\eta)] \d\xi \d\eta\,.
\label{eq:modelcalculation}
\end{align}
We now manipulate the symbol  $(\xi_0+\eta_0)\xi_i\eta^i$ as follows. First, we multiply and divide by the symbol of $e_0$ when acting on $h_\e^{\alpha\beta}$, which is $(\xi_0-\eta_0)$; this is allowed since $|\xi_0-\eta_0|\gg 1$ in the frequency regime we are considering. Then, we regroup terms so as to make the symbol of $\Box$, denoted $\sigma_\Box(\eta)\equiv -\eta_0^2+\eta^i\eta_i$, appear:
\begin{align}\begin{split}
(\xi_0-\eta_0)(\xi_0+\eta_0)\xi_i\eta^i 
&= \lp[\sigma_\Box(\eta)-\sigma_\Box(\xi)+\xi_k \xi^k-\eta_k \eta^k\rp]\xi_i\eta^i\\
&= \sigma_\Box(\eta)\xi_i (\eta^i-\xi^i+\xi^i) -\sigma_\Box(\xi) (\xi_i-\eta_i+\eta_i) \eta^i + (\xi^k\xi_i\eta^i + \xi_i\eta^i\eta^k)(\xi_k-\eta_k) \\
&= \xi_i\xi^i\sigma_\Box(\eta)-\eta_i\eta^i\sigma_\Box(\xi) + 
%
%
\lp(\eta^i\xi_0^2+\xi^i\eta_0^2\rp)
(\xi_i-\eta_i)\,.
\end{split}
\label{eq:symbolmanipulation}
\end{align}
Plugging this identity into \eqref{eq:modelcalculation}, we find   that terms which contain $\p^2 w_\e$ are always paired with $h_\e^{\alpha\beta}\Box u_\e$ or with $\p_i h_\e^{\alpha\beta}\p w_\e$. The latter is obviously compact and the former is compact as well, since
\begin{equation}
\label{eq:boundedbox}
\Box_g u_\e \tp{ is bounded in } L^4_\tp{loc}\,,
\end{equation}
unlike general second order derivatives of $u_\e$.
Hence \textit{lack of compactness} of $e_0 g_\e$ is \textit{compensated} by appealing to a differential condition on $u_\e$. It is the last two manipulations in \eqref{eq:symbolmanipulation} that ensure we have no more than two derivatives on each $w_\e$ and no more than one derivative on $h_\e^{\alpha\beta}$. This extra step means that our Hypotheses~\ref{hyp:main} contain no assumptions on derivatives of order $k>2$, c.f.\ \cite{Huneau2019} where assumptions on up to $k=4$ are imposed. 

\begin{remark}[The role of rate assumptions]
To apply compensated compactness methods it is crucial that we have differential information on the sequence with respect to fixed $\e$-independent differential operators, as in \eqref{eq:boundedbox}. Hypotheses \ref{hyp:main}\eqref{it:hypsource} are not sufficient to deduce \eqref{eq:boundedbox} and so, in the spirit of Conjecture \ref{conj:Burnett}, we require rate assumptions in Hypotheses \ref{hyp:main}\eqref{it:hyprates}.
\end{remark}

The simple proof we have given here for the case where $g$ is Minkowski in fact generalizes to any constant coefficient metric $g$, as long as one still takes $A$ to be a multiplier. When  $A$ is a true pseudo-differential operator with $x$-dependence and/or $g$ is $x$-dependent, an application of Plancherel leads to convolutions, and the division by the symbol of $e_0$, which may itself be $x$-dependent, becomes tricky. One way to remedy this situation is to apply cutoffs to ``freeze'' the $x$-dependence of $g$ and $A$, making them locally constant in $x$: if the balls where the freezing is done shrink in an appropriate way as $\e\to 0$, the above argument works. This is the route taken in \cite{Huneau2019} but it requires additional assumptions: in Hypotheses~\ref{hyp:main}\eqref{it:hyprates}, we  would also need information on the rate of uniform convergence of $u_\e$ compared to $\p^2 u_\e$ and $g_\e$. 

In this paper we do not take the previous approach and instead we implement the strategy outlined by \textit{defining} the inverse of $e_0$ as a pseudo-differential operator, which exists in the frequency regime we are considering. This simplifies the argument considerably and our proof is purely based on integration by parts:
\begin{enumerate}
\item We write $h_\e^{\alpha\beta}=e_0(e_0^{-1}h_\e^{\alpha\beta})$. Integrating by parts brings the extra $e_0$ derivative onto $u_\e$; the trilinear form of \eqref{eq:intro-commutator} is then key.
\item Relying on parity arguments and the structure of the commutator, we can use the extra $e_0$ derivative to fashion $\Box_g u_\e$ out of the second derivatives on $u_\e$ which appear. Further integration by parts ensures that we have only up to two derivatives of $u_\e$ and one derivative of $g_\e$.
\end{enumerate}
Combining the previous two points we show that \eqref{eq:modelcalculation} vanishes as $\e\to 0$, completing the proof.

\medskip

\noindent\textbf{Acknowledgments.} The authors were supported by the EPSRC, respectively grants [EP/L015811/1] and [EP/L016516/1]. We warmly thank Maxime van de Moortel for carefully reading an earlier version of the manuscript and Mihalis Dafermos for useful suggestions. We also thank all of those who came to celebrate the 4th of July with us.

\section{Preliminaries on H-measures and compensated compactness}
\label{sec:prelims-H-measures-CC}

\subsection{Symbols and pseudo-differential operators}
\label{sec:symbols-psidos}

In this section we gather some basic results about pseudo-differential operators. They can be found, for instance, in the books \cite{Hormander2007} and \cite{Grubb2009}. We take $\Omega\subset \R^N$ to be a fixed open set throughout.

\begin{definition}\label{symbols}
For $m\in \R$, a function $a$ is called a \textit{symbol of order} $m$, $a\in S^m\equiv S^m(\Omega,\C^{d\times d})$, if $a \in C^\infty(\Omega\times \R^N, \C^{d\times d})$ and, for each compact set $K\subset \Omega$, 
$$|\p_x^\a\p^\beta_\xi a(x,\xi)|\lesssim_{\a,\beta,K} (1+|\xi|)^{m-|\beta|}.$$
We write $S^{-\infty}=\bigcap_{m\in \R} S^m$.
\end{definition}

The following basic lemma gives meaning to asymptotic expansions of symbols:

\begin{lemma}
For $j\in \N_0$ let $a_j\in S^{m_j}$ and $m_j\searrow -\infty$. There is $a\in S^{m_0}$ such that, for every $k$,
$a-\sum_{j<k} a_{j}\in S^{m_k}$. 
The symbol $a$ is unique modulo $S^{-\infty}$ and we write
$a\sim \sum_{j=0}^\infty a_{j}$ in $S^m.$
\end{lemma}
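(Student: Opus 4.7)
The plan is to construct $a$ by a Borel-type summation, truncating each $a_j$ in the frequency variable so that the tail of the sum converges and each partial remainder lies in the expected symbol class.

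First I would fix a cutoff $\chi\in C^\infty(\R^N)$ with $\chi(\xi)=0$ for $|\xi|\leq 1$ and $\chi(\xi)=1$ for $|\xi|\geq 2$, and an exhaustion $K_1\Subset K_2\Subset \cdots\Subset \Omega$. For a sequence $t_j\nearrow\infty$ to be chosen, set
\[
a(x,\xi)\equiv\sum_{j=0}^\infty \chi(\xi/t_j)\,a_j(x,\xi).
\]
Since $\chi(\xi/t_j)=0$ on $|\xi|\leq t_j$, only finitely many terms contribute at any given $\xi$, so the sum is pointwise finite and $a\in C^\infty(\Omega\times\R^N)$.

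The main technical step is choosing $t_j$ to guarantee that $\chi(\xi/t_j)a_j\in S^{m_{j-1}}$ with a controlled norm. A routine computation using Definition \ref{symbols} and the support property of $\chi(\cdot/t_j)$ shows that for multi-indices with $|\alpha|,|\beta|\leq j$ and $x\in K_j$,
\[
\bigl|\p_x^\alpha\p_\xi^\beta[\chi(\xi/t_j)a_j(x,\xi)]\bigr|\lesssim_{\alpha,\beta,j}(1+|\xi|)^{m_j-|\beta|}\mathbf{1}_{|\xi|\geq t_j}\leq (1+t_j)^{m_j-m_{j-1}}(1+|\xi|)^{m_{j-1}-|\beta|}.
\]
Because $m_j<m_{j-1}$, we may now pick $t_j$ so large that the right-hand side is bounded by $2^{-j}(1+|\xi|)^{m_{j-1}-|\beta|}$ for all such $\alpha,\beta$ and for $x\in K_j$. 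Summing a geometric series then shows that for any fixed $k$, $\sum_{j\geq k}\chi(\xi/t_j)a_j\in S^{m_{k-1}}$, hence actually in $S^{m_k}$ up to the finitely many terms $a_k,\dots,a_{j_0}$ (absorbed into the partial sum). Since each $\chi(\xi/t_j)a_j-a_j\in S^{-\infty}$ (their difference is compactly supported in $\xi$ for each $j$), we conclude
\[
a-\sum_{j<k}a_j\in S^{m_k},\qquad \text{for every }k\in\N_0,
\]
which is exactly the required asymptotic expansion, and in particular $a\in S^{m_0}$.

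Finally, uniqueness is immediate: if $\tilde a\in S^{m_0}$ is another symbol with $\tilde a-\sum_{j<k}a_j\in S^{m_k}$ for all $k$, then $\tilde a-a\in S^{m_k}$ for every $k$, and since $m_k\searrow-\infty$ this forces $\tilde a-a\in\bigcap_k S^{m_k}=S^{-\infty}$. The only real obstacle is the bookkeeping in the choice of $t_j$, where one must ensure a single sequence works simultaneously for every compact set $K_j$ and every pair $(\alpha,\beta)$ with $|\alpha|,|\beta|\leq j$; the diagonal argument above handles this cleanly.
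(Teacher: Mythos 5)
Your proof is correct: it is the standard Borel-summation argument (truncate each $a_j$ at frequency $t_j$, choose $t_j$ so that the $j$-th term gains a factor $2^{-j}$ in the $S^{m_{j-1}}$ seminorms up to order $j$ on $K_j$, and conclude by a diagonal argument), which is precisely the proof given in the references \cite{Hormander2007,Grubb2009} that the paper cites for this lemma, the paper itself stating it without proof. The only cosmetic remark is that the passage from ``the tail lies in $S^{m_{k-1}}$'' to ``$a-\sum_{j<k}a_j\in S^{m_k}$'' is cleanest if you apply your estimate with $k$ replaced by $k+1$ and absorb the single term $a_k\in S^{m_k}$, which is what you indicate parenthetically.
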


Each symbol $a\in S^m$ induces an operator $A$ acting on $v\in C^\infty_c(\R^N,\C^d)$ by
$$Av(x)\equiv\int_{\R^N} a(x,\xi) e^{2 \pi i x\cdot \xi} \widehat v(\xi)  \d \xi.$$
We say that $A$ is a \textit{pseudo-differential operator} of order $m$. We write $\sigma(A)\equiv a$ and note that, for any pseudo-differential operator, the symbol $\sigma(A)$ is uniquely determined modulo $S^{-\infty}$.

\begin{lemma}\label{lemma:continuity}
If $a\in S^m$ then $A$ extends a continuous operator $A\colon H^s(\R^N,\C^d)\to H^{s-m}_\tp{loc}(\Omega,\C^d)$. In particular, if $m<0$ then $A\colon L^2(\R^n,\C^d)\to L^2_\tp{loc}(\Omega,\C^d)$ is compact.
\end{lemma}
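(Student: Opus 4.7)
The argument splits naturally into three steps: (i) reduction to the $L^2\to L^2$ boundedness of a zeroth order operator, (ii) the actual proof of $L^2$-boundedness, and (iii) the compactness statement. The first and third steps are essentially formal once the symbolic calculus is available; the second step is the only substantive one.

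\textbf{Step 1 (Reduction).} Since we only want mapping into $H^{s-m}_{\tp{loc}}(\Omega)$, I would first localize: for any $\chi\in C^\infty_c(\Omega)$, the operator $\chi A$ has symbol $\chi(x)a(x,\xi)\in S^m$ with compact $x$-support, and controlling $\|\chi Av\|_{H^{s-m}(\R^N)}$ for all such $\chi$ is equivalent to bounding $Av$ in $H^{s-m}_{\tp{loc}}(\Omega)$. Next, recall that the Bessel potential $\Lambda^t$, with symbol $(1+|\xi|^2)^{t/2}\in S^t$, is an isometric isomorphism $H^s\to H^{s-t}$ for every $s$. Using the composition formula for pseudo-differential operators (which is part of the symbolic calculus already implicit in the definition of $S^m$), the operator $B\equiv \Lambda^{s-m}\circ \chi A\circ \Lambda^{-s}$ is a pseudo-differential operator whose symbol lies in $S^0$. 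Thus the desired $H^s\to H^{s-m}$ bound for $\chi A$ reduces to the $L^2\to L^2$ bound for $B$.

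\textbf{Step 2 ($L^2$-boundedness for order $0$).} This is the main obstacle and it is the content of the Calder\'on--Vaillancourt theorem. Since we are working with a compactly $x$-supported symbol $b\in S^0$, I would follow the standard Cotlar--Stein route: dyadically decompose $b=\sum_{j\geq 0} b_j$ with $b_j$ supported in $|\xi|\sim 2^j$, and estimate the kernels of $B_jB_k^*$ and $B_j^*B_k$ by repeated integration by parts in $\xi$. The symbol bounds $|\p_\xi^\beta b|\lesssim (1+|\xi|)^{-|\beta|}$ yield almost-orthogonality $\|B_jB_k^*\|+\|B_j^*B_k\|\lesssim 2^{-|j-k|}$, and Cotlar--Stein delivers $L^2$-boundedness of the whole sum. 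Alternatively, one may invoke Calder\'on--Vaillancourt directly as a black box.

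\textbf{Step 3 (Compactness).} Suppose $m<0$. By Steps 1--2, $A\colon L^2(\R^N)\to H^{-m}_\tp{loc}(\Omega)$ is continuous. Fix any compact $K\subset \Omega$; then the composition with the restriction map gives a bounded operator $L^2(\R^N)\to H^{-m}(K)$. Rellich--Kondrachov implies that the inclusion $H^{-m}(K)\hookrightarrow L^2(K)$ is compact (using $-m>0$), so the composite operator $L^2(\R^N)\to L^2(K)$ is compact. Exhausting $\Omega$ by an increasing family of compact sets gives compactness into $L^2_{\tp{loc}}(\Omega)$ in the standard sense.

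The bulk of the work is in Step 2, but since the statement is classical and standardly used throughout Sections 3--5, I would likely simply cite Calder\'on--Vaillancourt from \cite{Hormander2007,Grubb2009} rather than reproducing its proof, as the authors seem to do here.
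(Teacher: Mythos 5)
Your outline is correct and is exactly the standard textbook argument. The paper itself offers no proof of this lemma: it is stated in Section 2.1 as one of several "basic results about pseudo-differential operators" and is simply cited from \cite{Hormander2007} and \cite{Grubb2009}, so your three steps (conjugation by Bessel potentials to reduce to order zero, Calder\'on--Vaillancourt/Cotlar--Stein for the $L^2$ bound, and Rellich--Kondrachov for compactness) reproduce precisely the proof those references give, and your closing remark that one would simply cite the result matches what the authors actually do.
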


We will work with a more restricted class of pseudo-differential operators, the so-called polyhomogeneous operators. To motivate the next definition, observe that if $a\in C^\infty(\Omega\times \R^N)$ satisfies
\begin{equation*}
a(x,t\xi)=t^m a(x,\xi) \quad \tp{ for all }t,|\xi|\geq 1,
\end{equation*}
then $a\in S^m$. Such functions are said to be \textit{positively $m$-homogeneous in $\xi$} for $|\xi|\geq 1$.

\begin{definition}
A symbol $a\in S^m$ is called \textit{polyhomogeneous} if 
$$a\sim \sum_{j=0}^\infty a_{m-j} \quad \tp{in } S^m$$
where  $a_{m-j}\in C^\infty(\Omega\times \R^N)$ is  positively $(m-j)$-homogeneous  in $\xi$ for $|\xi|\geq 1$.
The term $a_m$ is called the \textit{principal symbol} and is denoted by $\sigma^m(A)$.

The space of pseudo-differential operators with polyhomogeneous symbols in $S^m(\Omega,\C^{d\times d})$ is denoted by $\Psi^m_d(\Omega)$; if their symbols are compactly supported in $x$, we write $\Psi^m_{d,c}(\Omega)$.
\end{definition}

\begin{lemma}\label{lemma:expansions}
Take $P\in \Psi^l_{d}(\Omega)$ and $Q\in \Psi^m_{d}(\Omega)$. Writing $\D\equiv \frac{1}{i} \p$, we have the formulae
\begin{gather*}
\sigma(P^*) \sim \sum_{\a\in \N^n_0} \frac{1}{\a!} \p^\a_x \D^\a_\xi \sigma(P)^* \quad \tp{in } S^m\,,\qquad 
\sigma(PQ)\sim \sum_{\a\in \N^n_0}\frac{1}{\a!}\D^\a_\xi \sigma(P) \,\p^\a_x \sigma(Q) \quad \tp{in } S^m\,.
\end{gather*}
Thus, if $[\sigma(P),\sigma(Q)]=0$, then $[P,Q]\in \Psi^{l+m-1}_d(\Omega)$ with
$\sigma^{l+m-1}([P,Q]) = \frac 1 i \{\sigma^l(P),\sigma^m(Q)\}.$
\end{lemma}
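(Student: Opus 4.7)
My plan is to deduce both symbol formulas from the observation that adjoints and compositions of left-quantized pseudo-differential operators can themselves be written as ``amplitude operators'' of the form
$$Au(x)=\iint e^{2\pi i (x-y)\cdot \xi}\,a(x,y,\xi)\,u(y)\d y\d \xi,$$
where $a\in C^\infty(\Omega\times\Omega\times\R^N,\C^{d\times d})$ satisfies symbol-type bounds in $\xi$ locally uniformly in $(x,y)$. The key lemma, proved by Taylor-expanding $a(x,y,\xi)$ in $y$ around $y=x$ to order $N$ and repeatedly integrating by parts in $\xi$ (using that $(y-x)^\alpha$ applied to the phase equals a constant multiple of $D_\xi^\alpha$ applied to the phase), shows that any such amplitude operator is a pseudo-differential operator with symbol
$$\sigma(A)(x,\xi)\sim \sum_{\alpha\in\N_0^N}\tfrac{1}{\alpha!}\,\p_y^\alpha D_\xi^\alpha a(x,y,\xi)\big|_{y=x}\quad\tp{in } S^m.$$
Polyhomogeneity is preserved at each step, since $\p_x$, $\p_y$, $\p_\xi$ map a symbol positively homogeneous of degree $k$ in $\xi$ (for $|\xi|\geq 1$) to one of degree $k$, $k$, $k-1$ respectively, and multiplication adds degrees.

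For the adjoint formula, a direct computation from $(P^*u,v)_{L^2}=(u,Pv)_{L^2}$ together with Fubini shows
$$P^*u(x)=\iint e^{2\pi i (x-y)\cdot \xi}\,\sigma(P)(y,\xi)^*\,u(y)\d y\d \xi,$$
which is the amplitude operator associated to $a(x,y,\xi)=\sigma(P)(y,\xi)^*$. Since $\p_y^\alpha a|_{y=x}=\p_x^\alpha\sigma(P)(x,\xi)^*$, the key lemma yields the stated expansion for $\sigma(P^*)$. For the composition formula, substituting the definition of $Q$ and using the inverse Fourier transform to represent $\widehat{Qv}$ gives
$$PQv(x)=\iint e^{2\pi i (x-y)\cdot \xi}\,\sigma(P)(x,\xi)\,\sigma(Q)(y,\xi)\,v(y)\d y\d\xi,$$
which is the amplitude operator with $a(x,y,\xi)=\sigma(P)(x,\xi)\,\sigma(Q)(y,\xi)$. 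Since $\p_y^\alpha$ only hits the second factor, the key lemma delivers
$\sigma(PQ)\sim \sum_\alpha \frac{1}{\alpha!}D_\xi^\alpha \sigma(P)\,\p_x^\alpha \sigma(Q)$ in $S^{l+m}$.

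For the commutator statement, expand both $\sigma(PQ)$ and $\sigma(QP)$ using the composition formula. The top-order ($l+m$) terms are $\sigma^l(P)\sigma^m(Q)$ and $\sigma^m(Q)\sigma^l(P)$ respectively, which cancel by the hypothesis $[\sigma(P),\sigma(Q)]=0$. Collecting the $\alpha=e_j$ terms in each expansion and recalling $D_\xi=\tfrac{1}{i}\p_\xi$, the next-order part is
$$\sigma^{l+m-1}([P,Q])=\tfrac{1}{i}\bigl(\p_\xi\sigma^l(P)\cdot \p_x\sigma^m(Q)-\p_\xi\sigma^m(Q)\cdot \p_x\sigma^l(P)\bigr)=\tfrac{1}{i}\{\sigma^l(P),\sigma^m(Q)\},$$
so $[P,Q]\in\Psi_d^{l+m-1}(\Omega)$ with the claimed principal symbol.

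The main technical obstacle is the key lemma on amplitude operators: one must justify the oscillatory integral (via successive integrations by parts in $y$ or $\xi$ against the phase) and check that the $N$-th Taylor remainder of $a(x,y,\xi)$ in $y$ defines an operator whose symbol lies in $S^{m-N}$, so that the asymptotic series truly converges in the $S^m$ topology of Definition~\ref{symbols}. This is a standard but careful computation, and all further assertions of the lemma are algebraic consequences of it, so no further difficulty arises.
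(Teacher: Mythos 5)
The paper does not prove this lemma: it is stated in Section~2.1 as one of the ``basic results about pseudo-differential operators'' imported wholesale from the cited references (H\"ormander, Grubb), so there is no in-paper argument to compare against. Your proof is exactly the standard textbook derivation found in those references --- represent $P^*$ and $PQ$ as amplitude (double-symbol) operators, reduce an amplitude operator to left quantization by Taylor expanding in $y$ about $y=x$ and integrating by parts in $\xi$, and read off the asymptotic expansions; the commutator statement is then algebra --- and it is correct. Two small remarks. First, with the paper's quantization $Av(x)=\int a(x,\xi)e^{2\pi i x\cdot\xi}\widehat v(\xi)\,\mathrm{d}\xi$ the expansion coefficients carry factors of $(2\pi)^{-|\a|}$ unless one takes $\D=\frac{1}{2\pi i}\p$; the mismatch with $\D=\frac1i\p$ is an inconsistency of the paper's conventions rather than of your argument, but you should fix a convention and track the constant, since it propagates into the Poisson-bracket formula. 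Second, in the commutator step you should note explicitly that the order-$(l+m-1)$ contributions of the $\a=0$ terms, namely $[\sigma^l(P),\sigma^{m-1}(Q)]+[\sigma^{l-1}(P),\sigma^m(Q)]$, also cancel; this follows from the hypothesis $[\sigma(P),\sigma(Q)]=0$ by matching homogeneous components (and is vacuous in the scalar case $d=1$, which is all the paper uses), but as written you only account for the $\a=e_j$ terms.
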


Here, and in the sequel, $[p,q]\equiv pq-qp$ and $\{p,q\}$ denotes the \textit{Poisson bracket}, that is, 
$$\left\{p,q\right\}\equiv \frac{\p p}{\p \xi_j} \frac{\p q}{\p x^j}-\frac{\p p}{\p x^j} \frac{\p q}{\p \xi_j}\,.$$

\begin{theorem}[Calderón Commutator]
\label{thm:Calderon}
Let $P\in \Psi^1_1(\R^N)$ and let $a(x)$ be a Lipschitz function. Then, for any $1<p<\infty$, $[P,a]\colon L^p(\R^N)\to L^p(\R^N)$ is bounded and 
$$\Vert [P,a] f\Vert_{L^p} \leq C_p \Vert \nabla a \Vert_{L^\infty} \Vert f \Vert_{L^p}.$$
Conversely, if $[P,a]\colon L^2(\R^N)\to L^2(\R^N)$ is bounded for $P=\p_{x_j}$, $j=1,\dots, N$, then $a$ is Lipschitz.
\end{theorem}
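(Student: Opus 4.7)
The plan is to handle the converse by a direct distributional computation and to reduce the forward direction, via the symbol calculus of Lemma \ref{lemma:expansions}, to the classical Calderón commutator theorem for zeroth-order singular integrals. The converse is immediate: since $[\p_{x_j},a]f=(\p_{x_j}a)f$ in the distributional sense, boundedness on $L^2$ forces the multiplication operator $(\p_{x_j}a)\,\cdot\,$ to be bounded on $L^2$, hence $\p_{x_j}a\in L^\infty$ for every $j$, so $a\in W^{1,\infty}(\R^N)$ is Lipschitz with Lipschitz constant controlled by $\sum_j\Vert [\p_{x_j},a]\Vert_{L^2\to L^2}$.

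For the forward direction, I would first decompose $P$ modulo a smoothing operator as $P=\sum_{j=1}^N A_j\p_{x_j}+R$ with $A_j,R\in\Psi^0_1(\R^N)$. This can be done by introducing a cutoff $\chi(\xi)$ vanishing near the origin and equal to $1$ for $|\xi|\geq 1$, and declaring $A_j$ to have principal symbol $\chi(\xi)\,\sigma^1(P)(x,\xi)\,\xi_j/|\xi|^2$; the residual is of order zero since $\sigma^1(P)$ is positively $1$-homogeneous for $|\xi|\geq 1$. The Leibniz rule for commutators then yields
$$[P,a] \;=\; \sum_{j=1}^N A_j\bigl((\p_{x_j}a)\,\cdot\,\bigr) \;+\; \sum_{j=1}^N [A_j,a]\,\p_{x_j} \;+\; [R,a],$$
modulo a smoothing operator that is trivially $L^p$-bounded. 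The first sum is controlled by $\Vert \nabla a\Vert_{L^\infty}\Vert f\Vert_{L^p}$ using the $L^p$-continuity of the zeroth-order operators $A_j$, a standard Calderón--Zygmund fact for classical symbols of order $0$.

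The genuine content of the theorem lies in controlling $[A_j,a]\p_{x_j}$ and $[R,a]$ when $a$ is merely Lipschitz. For the former, I would invoke the classical Calderón commutator theorem: the distributional kernel $K_{A_j}(x,y)\bigl(a(y)-a(x)\bigr)\p_{y_j}$ of $[A_j,a]\p_{x_j}$, after an integration by parts in $y$ that moves the derivative onto $K_{A_j}(x,y)\bigl(a(y)-a(x)\bigr)$, is a Calderón--Zygmund kernel with constant $O(\Vert \nabla a\Vert_{L^\infty})$, hence $L^p$-bounded. For $[R,a]$ I would use the Coifman--Rochberg--Weiss commutator estimate $\Vert [R,a]\Vert_{L^p\to L^p}\lesssim \Vert a\Vert_{\mathrm{BMO}}\lesssim \Vert \nabla a\Vert_{L^\infty}$, noting that a Lipschitz function on $\R^N$ has $\mathrm{BMO}$-seminorm bounded by its Lipschitz constant. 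These two classical harmonic-analytic facts, whose proofs rest on the fact that the factor $a(y)-a(x)$ effectively gains one order of regularity when $a$ is Lipschitz, are the substantive input and the main obstacle; I would cite them from the standard references rather than reprove them.
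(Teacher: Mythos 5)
First, a point of comparison: the paper does not actually prove Theorem~\ref{thm:Calderon} — it quotes it and refers to \cite{Meyer2000}. Your proposal is therefore more detailed than the paper's treatment. The converse is correct as written, and the reduction of the forward direction via $P=\sum_j A_j\p_{x_j}+R$ modulo smoothing, with the hard classical input (the first Calderón commutator, i.e.\ the $L^p$-boundedness of the operator with kernel $\p_{y_j}K_{A_j}(x,y)\,(a(y)-a(x))$) deferred to the literature, is the standard route and is in the same spirit as the paper's citation. One caution on phrasing: ``is a Calderón--Zygmund kernel\dots hence $L^p$-bounded'' is not a valid implication by itself — the kernel estimates must be supplemented by $L^2$-boundedness, which is precisely the deep content of Calderón's theorem; your closing sentence indicates you are aware that this is the substantive input being cited rather than proved.

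There is, however, one genuine error: the claim that a Lipschitz function on $\R^N$ has $\mathrm{BMO}$-seminorm bounded by its Lipschitz constant is false, so Coifman--Rochberg--Weiss cannot be invoked for $[R,a]$. For $a(x)=x_1$ the mean oscillation over a ball of radius $r$ grows like $r$, so unbounded Lipschitz functions need not belong to $\mathrm{BMO}$ at all (Lipschitz and $\mathrm{BMO}$ are incomparable classes). Fortunately $[R,a]$ is the easiest of your three terms and needs no deep theorem: the Schwartz kernel of $R\in \Psi^0_1(\R^N)$ (with uniform symbol estimates) satisfies $|K_R(x,y)|\lesssim_M |x-y|^{-N}(1+|x-y|)^{-M}$ for every $M$, by integration by parts in $\xi$ in the oscillatory integral defining it. Hence the kernel of $[R,a]$, namely $K_R(x,y)\lp(a(y)-a(x)\rp)$, is dominated by $\Vert \nabla a\Vert_{L^\infty}\min\lp(|x-y|^{1-N},\,|x-y|^{1-M}\rp)$, which is integrable in $y$ uniformly in $x$ and vice versa; Schur's test then gives $\Vert [R,a]\Vert_{L^p\to L^p}\lesssim \Vert \nabla a\Vert_{L^\infty}$ directly for all $1\le p\le\infty$. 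With this substitution your argument is complete.
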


We refer the reader to \cite{Meyer2000} for a proof of Theorem \ref{thm:Calderon}.

%
%
%

\subsection{Existence and properties of H-measures}
\label{sec:existence-characterization-H-measure}

In this subsection we recall the definition of H-measures, as well as a few useful properties they possess. H-measures were introduced independently by Tartar \cite{Tartar1990,Tartar2010} and Gérard \cite{Gerard1991}, who called them \textit{microlocal defect measures}. Here we adopt Tartar's terminology and refer the reader to \cite{Tartar2010} for further details.

\begin{theorem}[Existence of H-measures]\label{thm:Hmeasure}
Let $v_\e \w v$ in $L^2(\Omega,\C^d)$. Up to a subsequence, there are Radon measures $\mu_{\a\beta}$, $\a,\beta=1,\dots, d$, such that
$$\mu_{\a\beta}= \overline{\mu_{\beta\a}}, \qquad \mu_{\a\beta}\xi^\a \bar \xi^\beta\geq 0 \tp{ for all } \xi\in \C^d$$
and, for any $A\in \Psi^0_{d,c}(\Omega)$, we have
\begin{equation}
\label{eq:Hmeasurecharact}
\lim_{\e\to 0}\langle A (v^\e-v),v^\e-v\rangle\equiv 
\lim_{\e\to 0} \int_{\Omega} A(v_\e -v) \cdot \overline{v_\e -v} \d x= \int_{S^* \Omega} \sigma^0(A)^{\a\beta} \d\mu_{\a\beta}\equiv 
 \langle \mu, \sigma^0(A)\rangle.
\end{equation}
The matrix-valued measure $\mu=(\mu_{\a\beta})_{\a,\beta}$ is called the {\normalfont H-measure} associated with $(v_\e)$.
\end{theorem}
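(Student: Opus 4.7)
The approach is the classical Tartar--G\'erard construction. Upon replacing $v_\e$ by $v_\e-v$ we may assume $v=0$. The plan is to define a sesquilinear limit functional on a dense subspace of test symbols via Cantor extraction, apply Riesz--Markov to produce the measures $\mu_{\a\beta}$, and verify the Hermitian and non-negativity properties by evaluating against operators of the form $B^*B$.

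For $\varphi\in C_c(\Omega)$ and $\psi\in C(\S^{N-1})$, extended 0-homogeneously to $\R^N\exc\{0\}$ and smoothly cut off near the origin, form the model operator $B_{\varphi,\psi}\equiv M_\varphi P_\psi\in \Psi^0_{1,c}(\Omega)$, where $M_\varphi$ is multiplication and $P_\psi$ is the Fourier multiplier with symbol $\psi$. By Lemma~\ref{lemma:continuity}, $B_{\varphi,\psi}$ is bounded on $L^2(\R^N)$ with norm $\lesssim \|\varphi\|_\infty\|\psi\|_\infty$, so for each $\a,\beta\in\{1,\dots,d\}$ the scalars $I_\e^{\a\beta}(\varphi,\psi)\equiv \langle B_{\varphi,\psi} v_\e^\beta, v_\e^\a\rangle$ are uniformly bounded in $\e$. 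Fixing a countable dense family in $C_c(\Omega)\times C(\S^{N-1})$ and applying Cantor's diagonal argument, we extract a single subsequence along which all $I_\e^{\a\beta}$ converge; the uniform bound extends the limit continuously to a sesquilinear form $L^{\a\beta}$. Since $C_c(\Omega)\otimes C(\S^{N-1})$ is dense in $C_0(S^*\Omega)$ by Stone--Weierstrass, the Riesz--Markov theorem produces complex Radon measures $\mu_{\a\beta}$ with $\lim_\e I_\e^{\a\beta}(\varphi,\psi) = \int \varphi(x)\psi(\xi)\,\d\mu_{\a\beta}$. Extension to an arbitrary $A\in\Psi^0_{d,c}(\Omega)$ follows by uniformly approximating each entry of $\sigma^0(A)$ by finite sums $\sum_i \varphi_i(x)\psi_i(\xi)$, since two operators in $\Psi^0_{d,c}$ with the same principal symbol differ by a $\Psi^{-1}_{d,c}$ element that is compact on $L^2$ (Lemma~\ref{lemma:continuity}) and hence annihilates the weakly null sequence $v_\e$.

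The algebraic properties follow from evaluating at particular operators. Hermitian symmetry $\mu_{\a\beta}=\overline{\mu_{\beta\a}}$ is immediate from $\overline{I_\e^{\a\beta}(\varphi,\psi)}=I_\e^{\beta\a}(\bar\varphi,\bar\psi)$, valid modulo a $\Psi^{-1}$ remainder produced by the adjoint formula in Lemma~\ref{lemma:expansions}. For positivity, given $\zeta\in\C^d$ and non-negative $\varphi\in C_c(\Omega)$, $\psi\in C(\S^{N-1})$, smoothly approximate the continuous square roots $\sqrt\varphi,\sqrt\psi$ and introduce the $1\times d$ symbol $b(x,\eta)\equiv \sqrt{\varphi(x)\psi(\eta)}(\bar\zeta_1,\dots,\bar\zeta_d)$. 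The associated operator $B$ lies in $\Psi^0_c$ and, by the composition formula in Lemma~\ref{lemma:expansions}, satisfies $\sigma^0(B^*B)^{\a\beta}=\varphi(x)\psi(\eta)\zeta^\a\bar\zeta^\beta$, whence
\[
\int_{S^*\Omega}\varphi(x)\psi(\xi)\zeta^\a\bar\zeta^\beta\,\d\mu_{\a\beta}
=\lim_{\e\to 0}\langle B^*Bv_\e,v_\e\rangle
=\lim_{\e\to 0}\|Bv_\e\|_{L^2}^2 \ge 0;
\]
arbitrariness of $\varphi,\psi\ge 0$ gives $\mu_{\a\beta}\xi^\a\bar\xi^\beta\ge 0$ as measures, where $\xi=\zeta$ in the theorem's notation. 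The main technical obstacle is the bookkeeping of the $\Psi^{-1}_d$ remainders generated at every step—composition, adjoint, symbol approximation, and smoothing of square roots—but the crucial point is that all such remainders are $L^2$-compact by Lemma~\ref{lemma:continuity} and therefore drop out against the weakly null sequence $v_\e$, preserving both the representation~\eqref{eq:Hmeasurecharact} and the positivity.
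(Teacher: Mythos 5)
The paper does not prove Theorem~\ref{thm:Hmeasure} --- it is quoted from Tartar and G\'erard --- so your proposal must be judged against the standard construction, whose overall architecture you reproduce correctly. However, there is a genuine gap at the central analytic step. After the diagonal extraction you have the bound $|I_\e^{\a\beta}(\varphi,\psi)|\le C\,\lVert\varphi\rVert_\infty\lVert\psi\rVert_\infty$, which controls the limiting sesquilinear form only in the \emph{projective} tensor norm on $C_c(\Omega)\otimes C(\S^{N-1})$. Riesz--Markov requires a functional bounded with respect to the \emph{uniform} norm on $C_0(S^*\Omega)$, and a bilinear form bounded in the projective norm need not extend to such a functional (the projective norm strictly dominates the sup norm on the tensor product, and by Grothendieck-type examples not every bounded bilinear form on $C(X)\times C(Y)$ is represented by a measure on $X\times Y$). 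The same missing estimate reappears when you pass from finite tensor sums to a general $A\in\Psi^0_{d,c}$: the difference between $A$ and an approximating sum $\sum_i M_{\varphi_i}P_{\psi_i}$ is not in $\Psi^{-1}$; it is an operator of order $0$ whose principal symbol is merely small in sup norm, and to discard it you again need $\limsup_\e|\langle Cv_\e,v_\e\rangle|\lesssim\lVert\sigma^0(C)\rVert_{L^\infty(S^*\Omega)}$. That estimate is the heart of the theorem and your write-up assumes it rather than proves it.

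The repair is standard and you already hold the key ingredient, but the order of the argument must change. Your $B^*B$ computation does not need the measures to exist: it shows directly that the limiting functional is non-negative on symbols of the form $\varphi(x)\psi(\xi)\zeta^\a\bar\zeta^\beta$ with $\varphi,\psi\ge0$ (modulo compact remainders, which vanish against the weakly null sequence). Establish this positivity \emph{first} on the dense subalgebra of finite tensor sums; positivity of a linear functional on a subalgebra containing a function identically $1$ on any given compact set then yields $|L^{\a\a}(a)|\le L^{\a\a}(\chi)\lVert a\rVert_\infty$ for the diagonal entries, and the off-diagonal entries are controlled by Cauchy--Schwarz for the positive semi-definite matrix $(L^{\a\beta})$. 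Only then may you invoke Riesz--Markov and close the approximation argument for general $A$. (The alternative, G\'erard's route, is to construct for $\lVert\sigma^0(A)\rVert_\infty\le M$ an operator $B$ with $\sigma^0(B^*B)=M^2I-\sigma^0(A)^*\sigma^0(A)$, giving $\lVert Av_\e\rVert^2\le M^2\lVert v_\e\rVert^2+o(1)$ directly.) With this reordering your proof is complete; as written, the Riesz--Markov step is unjustified.
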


In Theorem \ref{thm:Hmeasure}, as usual, $S^*\Omega\equiv\Omega\times S^{N-1}$ denotes the cosphere bundle over $\Omega$. Here, and in the rest of the paper, we will always write $\langle f,g \rangle\equiv \int_\Omega f \bar g \d x$ whenever this integral is meaningful.

\begin{remark}
\label{remark:symbolform}
The Stone--Weierstrass Theorem and a standard density argument show that it suffices to test \eqref{eq:Hmeasurecharact} with symbols of the form $\sigma^0(A)(x,\xi)=b(x)m(\xi)$, see also \cite[Remark 2.7]{Francfort2006}.
\end{remark}

The following lemma, although simple, describes a very important property of H-measures. 

\begin{lemma}[Localization property]\label{lemma:localization}
Let $(v_\e)$ be a sequence such that $v_\e\w v$ in $L^2(\Omega,\C^d)$ and let $\mu$ be its H-measure. Given $P\in \Psi^m_{d}(\Omega)$, we have
$$(Pv_\e)\tp{ is compact in } H^{-m}_\tp{loc} \iff \sigma^m(P)\mu=0.$$
\end{lemma}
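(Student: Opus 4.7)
The plan is to reduce to the scalar-order case $m=0$ and then dualize using the characterization of H-measures. Let $\Lambda^s$ denote a properly-supported elliptic pseudo-differential operator of order $s$ whose principal symbol is $(1+|\xi|^2)^{s/2}\,\mathrm{Id}_d$ (or any non-vanishing scalar). Since $\Lambda^{-m}$ maps $H^{-m}_\tp{loc}$ continuously into $L^2_\tp{loc}$, and $\Lambda^m\Lambda^{-m}=I+R$ with $R$ smoothing (by ellipticity and Lemma \ref{lemma:expansions}), a sequence is compact in $H^{-m}_\tp{loc}$ if and only if its image under $\Lambda^{-m}$ is compact in $L^2_\tp{loc}$. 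Applying this to $Pv_\e$, and noting that $\sigma^0(\Lambda^{-m}P)=\sigma^{-m}(\Lambda^{-m})\sigma^m(P)$ with the first factor everywhere invertible, we see that it suffices to prove the lemma with $m=0$.

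So assume $m=0$ and let $P\in \Psi^0_{d,c}(\Omega)$. For the implication $(\Leftarrow)$, suppose $\sigma^0(P)\mu=0$. Given $\varphi\in C_c^\infty(\Omega)$, the identity
$$\lVert \varphi P(v_\e-v)\rVert_{L^2}^2 = \langle P^*|\varphi|^2 P\,(v_\e-v), v_\e-v\rangle$$
together with the definition of $\mu$ (Theorem \ref{thm:Hmeasure}) and Lemma \ref{lemma:expansions} gives
$$\lim_{\e\to 0}\lVert \varphi P(v_\e-v)\rVert_{L^2}^2 = \big\langle \mu,\,|\varphi|^2\,\sigma^0(P)^*\sigma^0(P)\big\rangle,$$
which vanishes because the matrix $\sigma^0(P)^*\sigma^0(P)$ factors through $\sigma^0(P)$ on the right and $\sigma^0(P)\mu=0$. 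Hence $Pv_\e\to Pv$ in $L^2_\tp{loc}$.

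For the implication $(\Rightarrow)$, suppose $Pv_\e\to Pv$ in $L^2_\tp{loc}$. For any auxiliary $Q\in \Psi^0_{d,c}(\Omega)$ and $\varphi\in C_c^\infty(\Omega)$ we have, by Cauchy--Schwarz and the continuity of $Q$ in $L^2_\tp{loc}$,
$$\langle Q(v_\e-v),\,\varphi P(v_\e-v)\rangle \longrightarrow 0.$$
On the other hand, applying Lemma \ref{lemma:expansions} and the definition of $\mu$, the left-hand side converges to $\langle \mu,\varphi\,\sigma^0(P)^*\sigma^0(Q)\rangle$. Letting $\sigma^0(Q)$ run through arbitrary smooth $d\times d$ matrix-valued functions with compact $x$-support, and using Remark \ref{remark:symbolform}, we conclude that $\sigma^0(P)^*\mu=0$, which is equivalent to $\sigma^0(P)\mu=0$ by the Hermitian symmetry $\mu_{\a\beta}=\overline{\mu_{\beta\a}}$.

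The main conceptual obstacle is making the identity $\sigma^m(P)\mu=0$ precise given that $\mu$ is matrix-valued; this is resolved by testing against arbitrary zeroth-order symbols and invoking the positive-semidefiniteness and Hermitian symmetry of $\mu$ recorded in Theorem \ref{thm:Hmeasure}. Everything else is bookkeeping at the level of principal symbols, which is clean thanks to the polyhomogeneous calculus.
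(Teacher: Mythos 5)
The paper does not actually prove this lemma --- it is quoted as a classical property of H-measures from Tartar and G\'erard --- so there is no in-paper argument to compare against. Your proof is essentially the standard one from those references: reduce to order zero via an elliptic $\Lambda^{-m}$, prove $(\Leftarrow)$ by expanding $\lVert \varphi P(v_\e-v)\rVert_{L^2}^2=\langle P^*|\varphi|^2P(v_\e-v),v_\e-v\rangle$ and using the symbol calculus, and prove $(\Rightarrow)$ by polarizing against an arbitrary auxiliary $Q\in\Psi^0_{d,c}$. Both directions are sound, and the reduction to $m=0$ (including the observation that $\sigma^{-m}(\Lambda^{-m})$ is a nonvanishing scalar on the sphere, so multiplying by it does not affect the vanishing condition) is correct.

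One point deserves more care than your closing remark gives it: the meaning of ``$\sigma^m(P)\mu=0$'' for a matrix-valued measure. Your two directions in fact land on the \emph{same} contraction --- namely $\sum_\beta \sigma^0(P)^{\gamma\beta}\mu_{\alpha\beta}=0$ for all $\gamma,\alpha$, which is exactly the condition equivalent to vanishing of the H-measure of $(Pv_\e)$ --- so the argument is internally consistent and proves the intended statement. However, the justification ``$\sigma^0(P)^*\mu=0$ is equivalent to $\sigma^0(P)\mu=0$ by Hermitian symmetry'' is not literally true as a statement about matrix products: for a Hermitian positive semi-definite $M$ one has $p^*M=0\iff Mp=0$, but $pM=0$ and $Mp=0$ are genuinely inequivalent in general (e.g.\ $M=\mathrm{diag}(1,0)$ and $p=E_{12}$). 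What saves you is that the lemma's notation is itself convention-dependent, and positive semi-definiteness does give the equivalence you actually need, namely $pMp^*=0\iff pM=0$, which is how the compactness of $Pv_\e$ is tied to the one-sided contraction. I would state explicitly which index of $\mu$ is being contracted and invoke $pMp^*=0\iff pM=0$ rather than the loose ``Hermitian symmetry'' claim. Apart from this, and the usual unremarked technicalities (properly supported parametrices, smoothing remainders paired against merely locally $L^2$ sequences), the proof is complete.
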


To conclude this subsection we define a way of generating, in a \textit{non-canonical} fashion, an H-measure for a sequence that converges only locally in $L^2$:

\begin{definition}\label{def:Hmeasurelocal}
By passing to a subsequence, $v_\e\w v$ in $L^2_\tp{loc}(\Omega,\C^d)$ \textit{generates an H-measure} $\mu$,
$$v_\e\wH \mu,$$
as follows. Let $(K_i)_{i=1}^\infty$ be a compact exhaustion of $\Omega$ and let $\chi_i\in C^\infty_c(K_{i+1},[0,1])$ be such that $\chi_i=1$ on $K_i$. Consider a sequence of Radon measures $(\mu_i)$ constructed as follows: $\mu_1$ is the H-measure generated by a subsequence $(\chi_1 v_{\e'})_{\e'}$ of $(\chi_1 v_\e)_\e$, $\mu_2$ is the H-measure generated by a subsequence of $(\chi_2 v_{\e'})_{\e'}$, and so on. We define $\mu$ through its action on $\varphi \in C_c(S^*\Omega)$: let $i$ be such that $\supp \varphi \subset S^*K_i$ and set
$\langle \mu, \varphi \rangle \equiv \langle \mu_i,\varphi \rangle.$
It is easy to see that $\mu$ is well-defined.
\end{definition}

\subsection{Compensated compactness}

%

The next theorem, which is due to Robbin--Rogers--Temple \cite{Robbin1987} and generalizes an earlier result of Murat and Tartar \cite{Murat1978}, is the main compensated compactness result that we will use:

\begin{theorem}[Generalized div-curl lemma]
\label{thm:p-q-divcurl}
Let $p_1,p_2\in (1,\infty)$ be such that $\frac{1}{p_1} + \frac{1}{p_2}=1$. For differential forms $\omega_{i,\e}$ over $\Omega$ of degree $k_i$, $i=1,2$, such that $k_1+k_2\leq N$,
$$
\begin{rcases}
\omega_{i,\e} \w \omega_i  \text{ in } L^{p_i}_\tp{loc}(\Omega)\\
\d \omega_{i,\e} \text{ is compact in } W_\tp{loc}^{-1,p_i}(\Omega)
\end{rcases} \quad \implies \quad \omega_{1,\e} \wedge \omega_{2,\e} \wstar \omega_{1}\wedge \omega_2 \text{ in } \mathscr D'(\Omega).
$$
\end{theorem}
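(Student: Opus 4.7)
The plan is to establish the result via a Hodge-type decomposition on $\R^N$, following the classical strategy of Murat--Tartar. First, I would reduce to the Euclidean setting by localization: fix a test form $\phi\in C_c^\infty(\Omega,\Lambda^{N-k_1-k_2})$, a cutoff $\varphi\in C_c^\infty(\Omega)$ with $\varphi\equiv 1$ on $\supp\phi$, and replace each $\omega_{i,\e}$ by $\tilde\omega_{i,\e}\equiv \varphi\omega_{i,\e}$, extended by zero to $\R^N$. From the identity
$$d\tilde\omega_{i,\e}=\varphi\, d\omega_{i,\e}+d\varphi\wedge\omega_{i,\e}\,,$$
I deduce that $d\tilde\omega_{i,\e}$ is still compact in $W^{-1,p_i}(\R^N)$: the first term is compact by hypothesis, while the second is bounded in $L^{p_i}(\R^N)$ and hence compact in $W^{-1,p_i}(\R^N)$ by Rellich--Kondrachov.

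Second, I would apply the Hodge decomposition on $\R^N$. Using the identity $I=-d\,\Delta^{-1}d^*-d^*\Delta^{-1}d$ on compactly supported forms (understood via Fourier multipliers, since on flat $\R^N$ the Hodge Laplacian acts componentwise and commutes with $d$, $d^*$), I write
$$\tilde\omega_{i,\e}=d\alpha_{i,\e}+\gamma_{i,\e}\,,\qquad \alpha_{i,\e}\equiv -\Delta^{-1}d^*\tilde\omega_{i,\e}\,,\qquad \gamma_{i,\e}\equiv -d^*\Delta^{-1}d\tilde\omega_{i,\e}\,.$$
Since $\Delta^{-1}d^*$ has order $-1$, the Calder\'on--Zygmund inequality yields that $\alpha_{i,\e}$ is bounded in $W^{1,p_i}(\R^N)$, and therefore compact in $L^{p_i}_\tp{loc}$ by Rellich--Kondrachov. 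Likewise $d^*\Delta^{-1}\colon W^{-1,p_i}\to L^{p_i}$ is bounded, so $\gamma_{i,\e}$ is compact in $L^{p_i}(\R^N)$.

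Third, I would expand
$$\tilde\omega_{1,\e}\wedge\tilde\omega_{2,\e}=d\alpha_{1,\e}\wedge d\alpha_{2,\e}+d\alpha_{1,\e}\wedge\gamma_{2,\e}+\gamma_{1,\e}\wedge d\alpha_{2,\e}+\gamma_{1,\e}\wedge\gamma_{2,\e}\,.$$
The last three terms converge weakly in $L^1_\tp{loc}(\R^N)$ by H\"older's inequality and the relation $\tfrac{1}{p_1}+\tfrac{1}{p_2}=1$, as in each case one factor converges strongly and the other weakly in conjugate exponent. For the principal term I would use $d^2=0$ to write $d\alpha_{1,\e}\wedge d\alpha_{2,\e}=d(\alpha_{1,\e}\wedge d\alpha_{2,\e})$ and integrate by parts against $\phi$, obtaining
$$\int_{\R^N}\phi\wedge d\alpha_{1,\e}\wedge d\alpha_{2,\e}=\pm\int_{\R^N}d\phi\wedge \alpha_{1,\e}\wedge d\alpha_{2,\e}\,.$$
Since $\alpha_{1,\e}\to \alpha_1$ strongly in $L^{p_1}_\tp{loc}$ while $d\alpha_{2,\e}\w d\alpha_2$ weakly in $L^{p_2}_\tp{loc}$, the right-hand side converges by H\"older; reversing the integration by parts and summing the four contributions recovers $\tilde\omega_1\wedge\tilde\omega_2=\varphi^2\,\omega_1\wedge\omega_2$, which agrees with $\omega_1\wedge\omega_2$ on $\supp\phi$, as desired.

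The only genuine technical obstacle is setting up the Hodge decomposition rigorously on $\R^N$ and verifying the $L^p$ and $W^{-1,p}$ boundedness claims for the Riesz-type potentials $\Delta^{-1}d^*$ and $d^*\Delta^{-1}$. These are classical consequences of Calder\'on--Zygmund theory together with the Rellich--Kondrachov compact embedding; the exponent condition $1/p_1+1/p_2=1$ is precisely what makes the final H\"older pairing between a strongly and a weakly convergent factor close the argument.
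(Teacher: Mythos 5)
Your proof is correct. The paper does not actually prove Theorem~\ref{thm:p-q-divcurl} but cites it to Robbin--Rogers--Temple \cite{Robbin1987}, whose argument is exactly the Hodge-decomposition strategy you carry out (localize, split each form into an exact part that is compact in $L^{p_i}_\tp{loc}$ by Rellich and a co-exact remainder that is compact because $\d\omega_{i,\e}$ is, then integrate by parts on the exact--exact term); the Calder\'on--Zygmund/multiplier input you defer to is precisely what the paper alludes to when it remarks that the general $L^{p_1}$--$L^{p_2}$ case requires the H\"ormander--Mihlin theorem.
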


The case $p=q=2$ can be proved easily using H-measures, but for the general case one needs to use the H\"ormander--Mihlin multiplier theorem, which is applicable since the differential constraint in Theorem \ref{thm:p-q-divcurl} has \textit{constant rank} \cite{Murat1981}. We refer the reader to \cite{GuerraRaita2020,Raita2019} for characterizations of constant rank operators and to \cite{Guerra2019,Guerra2020} for generalizations of Theorem \ref{thm:p-q-divcurl} to this setting. 
 
The $L^p$-theory of compensated compactness, even in the bilinear setting, is extremely useful to deal with higher-order nonlinearities, and in fact Theorem \ref{thm:p-q-divcurl} extends straightforwardly to the general multilinear setting.
However, it is worthwhile noting that the $L^p$-theory in the non-constant rank case is still poorly understood. The classical wave operator $\Box \equiv -\p_{tt}+\Delta_x$, if rewritten as a first-order system, is an important example of such an operator but, due to the particular structure of $\Box$, Theorem \ref{thm:p-q-divcurl} will be enough for our purposes.

\section{The linear covariant wave equation}
\label{sec:linear-wave-non-oscillating}

This section is concerned with a linear covariant wave equation
\begin{equation}
\Box_{g} u =f\,,  \qquad u,f\colon \mc M\to \R\,,
 \label{eq:linear-wave-system}
\end{equation}
where $g$ is a smooth Lorentzian metric on an open domain $\mc M\subset \R^{1+n}$. 
Recall that
\begin{equation}
\Box_g u \equiv \frac{1}{\sqrt{|g|}} \p_\a\left(\sqrt{|g|}g^{\a\beta} \p_{\beta} u\right)=\nabla^\a \nabla_\a u\,, \label{eq:covariant-wave-op}
\end{equation}
where $g^{\a\beta}\equiv (g^{-1})^{\a\beta}$, $|g|\equiv |\tp{det} \,g|$ and $\nabla^\a$ is the covariant derivative with respect to $g$. We will also write
$\tp{dVol}_g\equiv \sqrt{|g|} \d x$ for the volume form induced by $g$.

It will be convenient to work with a diagonalized form of the wave operator. To this end, define
\begin{align}
\label{eq:Cauchyframe}
\beta^i\equiv -\frac{g^{0i}}{g^{00}}\,, \qquad e_0\equiv \p_0-\beta^i\p_i\,, \qquad \tilde{g}^{ij}\equiv g^{ij}-\frac{g^{0i}g^{0j}}{g^{00}}\,, 
\end{align}
The symbol of the timelike vector field $e_0$ appears naturally in relation to the zero mass shell of $g$: indeed,
\begin{equation}
\label{eq:lightcone}
g^{\a\beta}\xi_\a\xi_\beta = g^{00} (\xi_0-\beta^k\xi_k)^2 + \tilde{g}^{ij}\xi_i\xi_j\,.
\end{equation}

In order to use Stokes' theorem, we define some useful geometric quantities associated with the covariant wave operator. Given functions $u_1,u_2\colon \mc M\to \R$ and a smooth vector field $X$ on $\mc M$, let us write
\begin{equation}
\label{eq:defTJ}
\begin{split}
T_{\a\beta}[u^1, u^2] & \equiv \p_\a u^1 \,\p_\beta u^2-\frac 1 2 g_{\a\beta}g^{\mu\nu}\p_\mu u^1\,\p_\nu u^2\,,\\
J^X_\a[u^1, u^2] & \equiv \frac 1 2 \lp[Xu^1\,\p_\a u^2 + X u^2\, \p_\a u^1- X_\a g^{-1}(\tp{d} u^1,\d u^2)\rp]\,.\\
\end{split}
\end{equation} 
The energy-momentum tensor $T$ and the associated current $J^X$ are related by the \textit{energy identity}:
\begin{equation}
\nabla^\a J^X_\a[u^1,u^2] = \frac 1 2(X u^1\, \Box_g u^2 + Xu^2\,\Box_g u^1)
 + T_{\a\beta}[u^1, u^2]\,\nabla^\a X^\beta.
 \label{eq:energyidentity}
\end{equation}
When $u^1=u^2=u$ we recover the standard energy identity, see e.g.\ \cite{Alinhac2010, Dafermos2008} for further details. 

In this section we study the limiting behavior of sequences of solutions to \eqref{eq:linear-wave-system}. For the convenience of the reader, we state here a simplified form of Hypotheses \ref{hyp:main}:
\begin{hypotheses}
Let $u_\varepsilon,f_\varepsilon\colon \mc M\to \mathbb{R}$ be sequences such that $(u_\e,f_\e)$ satisfy,  for each $\varepsilon>0$, the linear wave equation \eqref{eq:linear-wave-system}.
We consider the following regularity conditions:
\begin{enumerate}
\item $g$ is smooth; \label{assum:smooth-metric}
\item $u_\varepsilon\w u$ in $W^{1,2}_\tp{loc}(\mc M)$; \label{assum:bounds-ueps-LinWave}
\item $f_\e\w f$ in $L^2_\tp{loc}(\mc M)$.
 \label{assum:bounds-box-ueps-LinWave}
\end{enumerate}
\label{hyp:linear-wave-system}
\end{hypotheses}

According to Definition \ref{def:Hmeasurelocal} and Hypotheses~\ref{hyp:linear-wave-system},  we may pass to a subsequence so that 
\begin{equation}
\lp( \p_0 u_\e, \p_1 u_\e, \dots, \p_n u_\e, f_\e \rp)_\e
\wH \begin{bmatrix} \tilde \nu & \tilde \lambda \\ 
\tilde \lambda^* & \mu
\end{bmatrix}\,,
\label{eq:defHmeasure}
\end{equation}
where $\tilde \nu$ is a $\C^{(n+1)\times (n+1)}$-valued measure, generated by $(\p_0 u_\e,\dots, \p_n u_\e)$, and $\tilde \lambda$ is $\C^{n+1}$-valued.

\subsection{The H-measure and its properties}

We are now ready to state the main result of this section, which describes the structure, support and propagation properties of the H-measure defined in \eqref{eq:defHmeasure}.
\begin{theorem}\label{thm:linearwave}
Let $(u_\varepsilon,f_\e)$ satisfy Hypotheses~\ref{hyp:linear-wave-system} and define $\tilde \nu$ and $\tilde{\lambda}$ as in \eqref{eq:defHmeasure}. Then:
\begin{enumerate}
\item \label{it:limit-eq-lin-wave}
{\normalfont \textbf{Limit equation.}} $(u,f)$ satisfy \eqref{eq:linear-wave-system} in the dense of distributions.
\item
{\normalfont \textbf{Energy density.}}
\label{it:structure} There are Radon measures $\nu$ and $\lambda$ on $S^*\mc M$ such that $\tilde \nu_{\alpha\beta}=\xi_\alpha\xi_\beta\nu$ and $\tilde{\lambda}_{\gamma}=\xi_\gamma \lambda$.
Furthermore, $\nu$ and $\lambda$ satisfy the following conditions:
\begin{enumerate}
[ref={\alph{enumi}\textsubscript{\arabic*}},label={\normalfont(\alph{enumi}\textsubscript{\arabic*})}, itemsep=0em,start=0]
\item \label{it:parity} {\normalfont Parity:} $\nu$ is even and $\lambda$ is odd, i.e.\
$\langle \nu, \tilde a\rangle =0$ for any $\tilde a\in C^\infty_c(S^*\mc M)$ which is odd in $\xi$,
and likewise for $\lambda$.
\item\label{it:support} {\normalfont Support property:}  for all $\varphi \in C^\infty_c(\mc M)$, $\nu$ and $\lambda$ satisfy
\begin{align*}
\langle \nu,\varphi(x) g^{\alpha\beta}(x)\xi_\alpha\xi_\beta\rangle=0,
\qquad
\langle \lambda,\varphi(x) g^{\alpha\beta}(x)\xi_\alpha\xi_\beta\rangle=0\,.
\end{align*}
\item\label{it:propagation} {\normalfont Propagation property:} for all $\tilde a(x,\xi)\in C^\infty_c(S^*\mc M)$, though of as positively 1-homogeneous functions in $\xi$, the measure $\nu$ satisfies
$$\langle \nu, \{g^{\alpha\beta}(x)\xi_\alpha \xi_\beta,\tilde a\}\rangle = - \langle \Re\lambda,\tilde a\rangle.$$
\end{enumerate}
\end{enumerate}
\end{theorem}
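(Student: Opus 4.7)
The proof splits naturally into the limit equation (\ref{it:limit-eq-lin-wave}), the algebraic structure (\ref{it:structure}), and the three subclaims on parity, support and propagation. Part (\ref{it:limit-eq-lin-wave}) is immediate: since $g$ is smooth and fixed, $u_\e\w u$ in $W^{1,2}_\tp{loc}$ yields $g^{\alpha\beta}\p_\beta u_\e\w g^{\alpha\beta}\p_\beta u$ in $L^2_\tp{loc}$, and $\Box_g u_\e=f_\e$ passes to the distributional limit.

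For the algebraic structure, I plan to apply Lemma~\ref{lemma:localization} to the sequence $(v^\e_0,\dots,v^\e_n,f_\e) \equiv (\p_0 u_\e,\dots,\p_n u_\e,f_\e)$ under two families of first-order constraints. First, the trivial curl relations $\p_\alpha v^\e_\beta - \p_\beta v^\e_\alpha = 0$ have principal symbol $i(\xi_\alpha\delta^\gamma_\beta-\xi_\beta\delta^\gamma_\alpha)$ on the $v$-block and $0$ on $f$; this yields $\xi_\alpha\tilde\nu_{\beta\gamma}=\xi_\beta\tilde\nu_{\alpha\gamma}$ and $\xi_\alpha\tilde\lambda_\beta=\xi_\beta\tilde\lambda_\alpha$, and a standard rank-one argument (localizing to a chart of $S^n$ where some $\xi_{\alpha_0}\neq 0$ and setting $\nu=\tilde\nu_{\alpha_0\alpha_0}/\xi_{\alpha_0}^2$, $\lambda=\tilde\lambda_{\alpha_0}/\xi_{\alpha_0}$) produces scalar measures $\nu,\lambda$ on $S^*\mc M$ with $\tilde\nu_{\alpha\beta}=\xi_\alpha\xi_\beta\nu$ and $\tilde\lambda_\gamma=\xi_\gamma\lambda$. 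Second, the wave equation itself, rewritten as $g^{\alpha\beta}\p_\alpha v_\beta^\e - f_\e = c^\gamma(x) v^\e_\gamma$ where the right-hand side absorbs the $\Gamma$-type zeroth-order terms and is bounded in $L^2_\tp{loc}$ (hence compact in $H^{-1}_\tp{loc}$), has principal symbol $(ig^{\alpha\beta}\xi_\alpha,0)$; localization forces $g^{\alpha\beta}\xi_\alpha\tilde\nu_{\beta\gamma}=0$ and $g^{\alpha\beta}\xi_\alpha\tilde\lambda_\beta=0$, from which the support property (\ref{it:support}) follows after substituting the structure. The parity (\ref{it:parity}) is a standard feature of H-measures of real-valued sequences: via the Fourier symmetry $\widehat v(-\xi)=\overline{\widehat v(\xi)}$, a real symbol odd in $\xi$ induces a pseudo-differential operator sending real functions to purely imaginary ones, so $\langle Av_\e,v_\e\rangle$ is purely imaginary; since $\nu$ is a real positive scalar measure, the limit must vanish, and the analogous argument fixes the parity of $\lambda$.

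The propagation (\ref{it:propagation}) is the main obstacle and follows the commutator strategy of Tartar~\cite{Tartar1990} and Francfort--Murat~\cite{Francfort1992}. Let $\langle\cdot,\cdot\rangle_g$ denote the $L^2(\tp{dVol}_g)$-inner product, in which $\Box_g$ is self-adjoint. For $A\in\Psi^0_{1,c}(\mc M)$ self-adjoint with respect to $\langle\cdot,\cdot\rangle_g$ and with real, even principal symbol $\tilde a$, the identity
\begin{equation*}
\langle [\Box_g,A]u_\e,u_\e\rangle_g = \langle Au_\e,f_\e\rangle_g - \langle Af_\e,u_\e\rangle_g
\end{equation*}
follows from $\Box_g u_\e=f_\e$ and the self-adjointness of $\Box_g$. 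By Lemma~\ref{lemma:expansions}, the scalar principal symbols of $\Box_g$ and $A$ commute, so $[\Box_g,A]\in\Psi^1$ with $\sigma^1([\Box_g,A])=\tfrac{1}{i}\{g^{\alpha\beta}\xi_\alpha\xi_\beta,\tilde a\}$, and all pairings are well-defined since $u_\e\in W^{1,2}_\tp{loc}$. As $\e\to 0$, the right-hand side converges, via the cross H-measure $\tilde\lambda$ and the parity of $\lambda$, to a multiple of $\langle\Re\lambda,\tilde a\rangle$. The left-hand side requires more care: decomposing $[\Box_g,A]=\sum_\alpha P^\alpha\p_\alpha + Q$ with $P^\alpha,Q\in\Psi^0$ (by writing the $1$-homogeneous principal symbol as $p^\alpha\xi_\alpha$ with $p^\alpha$ $0$-homogeneous off the zero section), the $Q$-contribution vanishes by Rellich ($u_\e\to u$ strongly in $L^2_\tp{loc}$), while integration by parts converts each leading term into a bilinear form in $\p u_\e$ whose limit is controlled by the definition of $\tilde\nu$ and the structure $\tilde\nu_{\alpha\beta}=\xi_\alpha\xi_\beta\nu$. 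The resulting limit equals $\tfrac{1}{i}\langle\nu,\{g^{\alpha\beta}\xi_\alpha\xi_\beta,\tilde a\}\rangle$, and matching the two limits yields \eqref{eq:propagationprop} up to the expected constant. The main technical obstacle is precisely this bookkeeping of the commutator pairing and the attendant integrations by parts that reduce $\langle[\Box_g,A]u_\e,u_\e\rangle_g$ to a bilinear form in $\p u_\e$ visible to the H-measure \eqref{eq:defHmeasure}; this is a standard but delicate exercise in the pseudo-differential calculus of Section~\ref{sec:symbols-psidos}.
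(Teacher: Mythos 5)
Parts (\ref{it:limit-eq-lin-wave}) and (\ref{it:structure}), together with the parity and support properties, are handled correctly and essentially as in the paper: the curl relations plus the localization lemma give the rank-one structure, and your observation that applying Lemma~\ref{lemma:localization} to the constraint $g^{\alpha\beta}\p_\alpha v^\e_\beta - f_\e = O_{L^2}(1)$ yields the support property for $\lambda$ directly is a clean variant of the paper's positive-semidefiniteness argument.

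The propagation property, however, has a genuine gap. Your central identity $\langle [\Box_g,A]u_\e,u_\e\rangle_g = \langle Au_\e,f_\e\rangle_g - \langle Af_\e,u_\e\rangle_g$ with $A\in\Psi^0_{1,c}$ is correct but carries no information about the H-measure. The measure $\tilde\nu$ is generated by $\p u_\e$, so it is only visible in bilinear quantities of \emph{total differential order two} in $u_\e$ (one derivative on each factor). Your left-hand side pairs $[\Box_g,A]u_\e$, with $[\Box_g,A]\in\Psi^1$, against $u_\e$ itself, which converges \emph{strongly} in $L^2_\tp{loc}$ by Rellich; hence the whole expression converges to $\langle[\Box_g,A]u,u\rangle_g$ and the defect never appears. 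The same happens on the right: $Au_\e\to Au$ strongly in $L^2_\tp{loc}$, so $\langle Au_\e,f_\e\rangle_g\to\langle Au,f\rangle_g$ (indeed, with $A$ self-adjoint and reality-preserving, both sides of your identity are $o(1)$, so the identity degenerates to $0=0$). The step ``integration by parts converts each leading term into a bilinear form in $\p u_\e$'' cannot work: integration by parts only redistributes the single derivative present in $P^\a\p_\a u_\e\cdot u_\e$; it cannot manufacture a second one. This is precisely why the paper multiplies the commuted equations \eqref{eq:energyidentitiesA} by $X(\overline{Au_\e})$ for a vector field $X$ and invokes the energy identity \eqref{eq:energyidentity}: every resulting term --- $\nabla^\a J^X_\a$, $T_{\a\beta}\nabla^\a X^\beta$, $X(\overline{Au_\e})Af_\e$ and $X(\overline{Au_\e})[\Box_g,A]u_\e$ --- is then genuinely quadratic in first derivatives and is computed by $\tilde\nu$ and $\tilde\lambda$. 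An alternative repair in the spirit of your argument is to take $A\in\Psi^1$ with positively $1$-homogeneous principal symbol $\tilde a$ (note that in your write-up $\tilde a$ is simultaneously the $1$-homogeneous test function of the statement and the principal symbol of an operator of order $0$, which is inconsistent); then $[\Box_g,A]\in\Psi^2$, the left-hand side has total order two and does see $\tilde\nu$, and $\langle Au_\e,f_\e\rangle$ becomes a pairing of two weakly convergent $L^2$ sequences captured by $\tilde\lambda$. Either route requires a real argument that your proposal does not yet contain.
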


Theorem \ref{thm:linearwave} follows by standard methods, and similar statements have appeared in \cite[Theorem 3.12]{Tartar1990} and \cite{Antonic1996, Francfort2006}. 
The main novelty here is that our proof holds for a general \textit{covariant wave operator} where, unlike in these references, the coefficients of the operator are allowed to depend both on $x^0=t$ and $(x^1,\dots,x^n)$. 

Before proceeding with the core of the proof, we show that we may assume that the convergence in Hypotheses \ref{hyp:linear-wave-system} is \textit{global} and not just local:

\begin{proof}[Reduction to compact supports]
Let $\chi \in C^\infty_c(\mc M)$ satisfy $\chi=1$ on a compact set $K$. Then
$$\Box_g(\tilde u_\e)\equiv \Box_g(\chi u_\e)=  f_\e\chi + 2 g^{-1}(\tp{d}u_\e, \d \chi) + u_\e \Box_g \chi \equiv \tilde f_\e.$$
Suppose that, for every such $\chi$, the conclusion of Theorem \ref{thm:linearwave} holds, with $\tilde \nu$ and $\tilde \lambda$ being now the H-measures generated according to \eqref{eq:defHmeasure}, but with $u_\e$ replaced with $\tilde u_\e$ and $f_\e$ replaced with $\tilde f_\e$. Since $(u_\e,f_\e)=(\tilde u_\e,\tilde f_\e)$ on $K$, it is then clear, recalling Definition \ref{def:Hmeasurelocal}, that the original H-measure generated by $(u_\e,f_\e)$ also satisfies the conclusion of Theorem \ref{thm:linearwave}.
\end{proof}

Thus, from now onwards, we assume that the sequence $(u_\e,f_\e)_\e$  has uniformly bounded support.

\begin{proof}[Proof of Theorem \ref{thm:linearwave}(\ref{it:limit-eq-lin-wave},\ref{it:parity},\ref{it:support})]
Part \eqref{it:limit-eq-lin-wave} follows from the divergence structure of $\Box_g$, see Proposition \ref{prop:limit-equation-LinWaveOsc} for a more general statement.

Noting that  
$\tp{D}_\a \p_\beta u_\e = \tp{D}_\beta\p_\a u_\e$, Lemma \ref{lemma:localization} yields 
$\xi_\a \tilde \nu_{\beta \gamma} = \xi_\beta \tilde \nu_{\a \gamma}$. It follows that $\tilde \nu_{\a\beta} = \xi_\a {\rho}_\beta$ for some $\C^d$-valued Radon measure $\rho$. Since $\mu$ is Hermitian and non-negative, we must have $\rho = \xi \nu$ for another non-negative Radon measure $\nu$. Likewise, $\tilde{\lambda}_{\gamma}=\xi_\gamma \lambda$ for some Radon measure $\lambda$.

The support property of $\nu$ in (\ref{it:support}) follows by applying again Lemma \ref{lemma:localization}: since $\Box_g u_\e= f_\e$, by Hypotheses \ref{hyp:linear-wave-system}(\ref{assum:bounds-box-ueps-LinWave})  we see that the sequence of vector fields $(\sqrt{|g|} g^{\a \beta} v_{\beta,\e})_\a$ has a divergence which is compact in $H^{-1}_\tp{loc}$ and so
$g^{\a \beta} \xi_\a \xi_\beta \nu = 0$.
In turn, the support of $\lambda$ is contained in the support of $\nu$. Indeed, from  \eqref{eq:defHmeasure} and the basic properties of H-measures, for any measurable set $E\subset S^*\mc M$,
$$
M\equiv 
\begin{bmatrix}
\tilde{\nu}(E) & \tilde \lambda(E)\\
\tilde \lambda^*(E) & \mu(E)
\end{bmatrix}
$$
is a positive semi-definite matrix and $\mu(E)\geq 0$, hence $\mu(E)\geq 0\implies \tilde\lambda(E)=0$.

To prove part \eqref{it:parity} we consider a real symbol $a(x,\xi)=b(x)m(\xi)$; the general case follows according to Remark \ref{remark:symbolform}. Suppose that $m$ is odd: then, using Plancherel's identity,
\begin{align*}
\langle Ae_0u_\e, e_0 u_\e\rangle
& =\iint \hat b(\xi-\eta) m(\eta)\widehat{e_0 u_\e}(\eta) \overline{\widehat{e_0u_\e}}(\xi)\d \xi \d\eta\\
& =- \iint  \hat b(\eta-\xi) m(\eta)\overline{\widehat{e_0 u_\e}}(\eta)\widehat{e_0 u_\e}(\xi) \d\xi\d\eta=-\langle Ae_0 u_\e, e_0u_\e\rangle\,,
\end{align*}
where in the last line we made the change of variables $(\xi,\eta)\mapsto -(\xi,\eta)$, used the fact that $m$ is odd and that all functions are real. Hence
$$\langle \nu, (\xi_0-\beta^i\xi_i)^2 a\rangle
=\lim_{\e\to 0} \langle Ae_0u_\e, e_0 u_\e\rangle = 0.
$$
Note that, by \eqref{eq:lightcone}, $\xi_0-\beta^i \xi_i$ never vanishes on the zero mass shell $\{g^{\a\beta}\xi_\a\xi_\beta=0\}$ where, according to \eqref{it:support}, $\nu$ is supported. Hence we have shown that $\langle \nu, a\rangle =0$ whenever $a$ is odd in $\xi$.  An identical argument for $\lambda$, which is also supported in the zero mass shell, concludes the proof.
\end{proof}

The proof of part (\ref{it:propagation}) is more involved but follows essentially the outline of \cite[Theorem 3.12]{Tartar1990}. The crucial technical ingredient is contained in the following lemma:

\begin{lemma}\label{lemma:commutator-wave}
Let $g$ be a smooth Lorentzian metric and take $A\in \Psi^0_{1,c}$. Then $[\Box_g,A]\in \Psi^1_1$ and
$$\sigma^1([\Box_g,A])= \sigma^1(i P^\a \D_\a)= \sigma^1( P^\a\p_\a)\,,$$
where $P^\a\in \Psi_1^0$ is such that
$$\sigma^0(P^\a) \equiv 2 g^{\a \beta} \p_{x^\beta} a -\p_{x^\mu} g^{\a\beta}\xi_\beta \p_{\xi^\mu} a\,.$$
\end{lemma}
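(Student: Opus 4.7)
The plan is to treat $\Box_g$ as a second-order polyhomogeneous pseudo-differential operator and apply the composition calculus in Lemma \ref{lemma:expansions} directly. Expanding the divergence form \eqref{eq:covariant-wave-op} using smoothness of $g$ yields
$$\Box_g = g^{\alpha\beta}\partial_\alpha\partial_\beta + b^\beta\partial_\beta$$
for some smooth $b^\beta$, so $\Box_g\in \Psi^2_1(\mathcal M)$ with principal symbol $\sigma^2(\Box_g)(x,\xi) = -g^{\alpha\beta}(x)\xi_\alpha\xi_\beta$ (the minus sign coming from the convention $\sigma(\partial_\alpha) = i\xi_\alpha$).

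Since $\sigma^2(\Box_g)$ and $\sigma^0(A)=a$ are both scalar-valued, they commute pointwise, so the hypothesis of the last statement in Lemma \ref{lemma:expansions} is met. Consequently $[\Box_g,A]\in \Psi^{2+0-1}_1 = \Psi^1_1$ with
$$\sigma^1([\Box_g,A]) = \tfrac{1}{i}\bigl\{-g^{\alpha\beta}(x)\xi_\alpha\xi_\beta,\;a(x,\xi)\bigr\}.$$

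Next, I would compute the Poisson bracket explicitly. Setting $p(x,\xi) \equiv -g^{\alpha\beta}(x)\xi_\alpha\xi_\beta$ and using the symmetry of $g^{\alpha\beta}$, one finds $\partial_{\xi_j}p = -2 g^{j\beta}\xi_\beta$ and $\partial_{x^j}p = -(\partial_{x^j} g^{\alpha\beta})\xi_\alpha\xi_\beta$, hence the definition of $\{\cdot,\cdot\}$ gives
$$\{p,a\} = -2 g^{j\beta}\xi_\beta\,\partial_{x^j} a + (\partial_{x^j} g^{\alpha\beta})\xi_\alpha\xi_\beta\,\partial_{\xi_j} a.$$

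Finally I would match this against the claimed formula. Factoring $\xi_\alpha$ out, the right-hand side equals $-\sigma^0(P^\alpha)\,\xi_\alpha$ where $\sigma^0(P^\alpha) = 2g^{\alpha\beta}\partial_{x^\beta}a - (\partial_{x^\mu}g^{\alpha\beta})\xi_\beta\,\partial_{\xi_\mu}a$, so that
$$\sigma^1([\Box_g,A]) = \tfrac{1}{i}\{p,a\} = i\,\sigma^0(P^\alpha)\,\xi_\alpha = \sigma^1(iP^\alpha D_\alpha) = \sigma^1(P^\alpha \partial_\alpha),$$
the last equality being a tautology after $\partial_\alpha = iD_\alpha$. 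There is no real obstacle here beyond careful bookkeeping of the sign convention $\sigma(\partial_\alpha)=i\xi_\alpha$ and of the factor of $\tfrac{1}{i}$ in the commutator formula; the content is entirely a symbolic computation on top of Lemma \ref{lemma:expansions}.
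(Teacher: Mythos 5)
Your proposal is correct and is essentially identical to the paper's proof, which simply invokes the last part of Lemma \ref{lemma:expansions} (the commutator of polyhomogeneous operators with commuting scalar symbols has principal symbol $\tfrac{1}{i}$ times the Poisson bracket) and leaves the bracket computation implicit; you have merely written out that computation, and the bookkeeping of signs checks out.
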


Since $g$ is assumed to be smooth, Lemma \ref{lemma:commutator-wave} follows at once from the last part of Lemma \ref{lemma:expansions}. Nonetheless, the result still holds if $g\in C^1$, although this is much more difficult:

\begin{remark}\label{remark:smoothness} 
The Calderón Commutator (Theorem \ref{thm:Calderon}) shows that $[\Box_g, A]\colon H^1\to L^2$ is bounded, even when $g$ is just $C^1$, but this assumption cannot be substantially weakened, c.f.\ \cite[pages 336-337]{Tartar2010} and \cite{Coifman1976, Uchiyama1978}. 
\end{remark}

\begin{proof}[Proof of Theorem \ref{thm:linearwave}(\ref{it:propagation})]
Let us take $A\in \Psi^0_1$ to be a multiplier, so $a(x,\xi)\equiv m(\xi)$. We begin by applying $A$ and $\overline{A}$ to \eqref{eq:linear-wave-system} to get, respectively,
\begin{equation}
\label{eq:energyidentitiesA}
\Box_g (A u_\e) = A f_\e +[\Box_g,A] u_\e,
\qquad 
\Box_g (\overline{A u_\e}) = \overline{A f_\e} +\overline{[\Box_g,A] u_\e}\,.
\end{equation}
Given a smooth vector field $X$, we multiply the first equation by $X (\overline{A u_\e})$, the second equation by $X(Au_\e)$, and sum the two. Using the energy identity \eqref{eq:energyidentity} we get
\begin{equation}
\label{eq:auxtransport}
\begin{split}
& \nabla^\a J_\a^X[Au_\e,\overline{A u_\e}] - T_{\a\beta}[A u_\e,\overline{A u_\e}] \nabla^\a X^\beta
 \\
& \qquad = \frac 1 2 \lp[X(\overline{A u_\e}) Af_\e+X(Au_\e)\overline{A f_\e}\rp] + 
\frac 1 2\lp[ X(\overline{A u_\e}) [\Box_g, A]u_\e+X(Au_\e) \overline{[\Box_g,  A]u_\e}\rp].
\end{split}
\end{equation}
Now let $\varphi\in C^\infty_c(\R^{1+n})$ and integrate \eqref{eq:auxtransport} against $\varphi$ with respect to $\tp{d} \tp{Vol}_g$. We deal with each of the corresponding terms separately. 

\medskip
\noindent\textbf{Step 1:} the left hand side of \eqref{eq:auxtransport}. For the first term, we integrate by parts and recall \eqref{eq:defTJ}:
\begin{equation}
\label{eq:firsterm}
\begin{split}
\int &\nabla^\a J_\a^X[Au_\e,\overline{A u_\e}] \varphi \d\tp{Vol}_g\\
&=-\frac 1 2 \int\lp( X(Au_\e) \p_\a (\overline{A u_\e}) + X(\overline{A u_\e}) \p_\a (Au_\e)-X_\a g^{\beta\gamma}\p_\beta(Au_\e)\p_\gamma(\overline{A u_\e})\rp) \nabla^\a \varphi \d \tp{Vol}_g.
\end{split}
\end{equation}
Using the fact that $A$ is a multiplier and that $\sigma^0(A^*)=\sigma^0(A)^*$, we have
\begin{align*}
\lim_{\e\to 0}\int X(Au_\e)\p_\a (\overline{A u_\e}) \nabla^\a \varphi \d \tp{Vol}_g 
 & = \lim_{\e\to 0}\int A^*\bigg(A(\p_\beta u_\e) X^\beta  \nabla^\a \varphi \sqrt{|g|}\bigg) \p_\a u_\e \d x\\
& = \lim_{\e \to 0} \int A^* A(\p_\beta u_\e) \p_\a u_\e X^\beta \nabla^\a\varphi\sqrt{|g|}\d x\\
& =\langle \tilde\nu_{\a\beta},|m(\xi)|^2 X^\beta \nabla^\a \varphi\sqrt{|g|} \rangle
= \langle \nu,g^{\a\gamma} \xi_\a \xi_\beta|m(\xi)|^2 X^\beta \nabla_\gamma \varphi\sqrt{|g|} \rangle,
\end{align*}
where we also used the fact that $[A^*,X^\beta \nabla^\a\varphi \sqrt{|g|}]\colon L^2\to L^2$ is compact, c.f.\ Lemma \ref{lemma:continuity}.
The second term on the right-hand side of \eqref{eq:firsterm} is treated identically and has the same limit.
Finally, the last term on the right-hand side of \eqref{eq:firsterm} vanishes in the limit: indeed, arguing as before,
\begin{align*}
\lim_{\e \to 0}\int X_\a g^{\beta \gamma}\p_\beta(Au_\e)\p_\gamma(\overline{Au_\e})\n^\a\varphi \d\tp{Vol}_g
& =\langle \nu, X_\a g^{\beta\gamma} \xi_\beta \xi_\gamma |m(\xi)|^2 \nabla^\a \varphi \sqrt{|g|} \rangle =0\,,
\end{align*}
using the support condition on $\nu$. 

For the second term in \eqref{eq:auxtransport}, similar arguments yield
\begin{align*}
\lim_{\e\to 0}\int T_{\a\beta}[Au_\e,\overline{Au_\e}]\n^\a X^\beta \varphi\d\tp{Vol}_g
&=
\langle \nu, (\xi_\a\xi_\beta \n^\a X^\beta -\frac 1 2 g_{\a\beta}g^{\mu\nu} \xi_\mu \xi_\nu)|m(\xi)|^2\varphi\sqrt{|g|}\rangle\\
 &= \langle \nu, g^{\a\gamma}\xi_\a\xi_\beta \n_\gamma X^\beta|m(\xi)|^2 \varphi\sqrt{|g|}\rangle.
\end{align*}
Setting $\Phi^\beta(x,\xi)\equiv X^\beta |m(\xi)|^2 \varphi(x)\sqrt{|g|}$, $\Phi\equiv\xi_\beta\Phi^\beta$, and using the fact that $\nabla_\gamma g=0$, we have calculated the limit of the left-hand side of \eqref{eq:auxtransport}:
$$
\lim_{\e\to 0}
\int \lp(\nabla^\a J_\a^X[Au_\e,\overline{Au_\e}]-T_{\a\beta}[Au_\e,\overline{Au_\e}]\n^\a X^\beta\rp)\varphi\d\tp{Vol}_g
=
-\langle \nu, g^{\a\gamma}\xi_\a  \xi_\beta \n_{\gamma}\Phi^\beta\rangle.$$

\medskip
\noindent\textbf{Step 2:} the right hand side of \eqref{eq:auxtransport}. For the first term we have
\begin{align*}
\lim_{\e\to 0} \frac 1 2\int\lp( X(\overline{Au_\e}) A f_\e + X(Au_\e) \overline{Af_\e}\rp)\varphi \d \tp{Vol}_g 
 & = \lim_{\e\to 0}\frac 1 2\int \lp(X u_\e A^*Af_\e + X(A^*Au_\e) f_\e\rp)\varphi \d\tp{Vol}_g\\
  &= \frac 1 2\langle X^\beta( \tilde \lambda_{\beta} +  \tilde \lambda_{\beta}^*),|m(\xi)|^2  \varphi\sqrt{|g|} \rangle
 = \langle \Re \lambda,\Phi\rangle \,.
\end{align*}
According to Lemma \ref{lemma:commutator-wave}, the last term yields
\begin{align*}
&\lim_{\e\to 0}\frac 1 2
\int \lp(X(\overline{Au_\e}) [\Box_g, A]u_\e+X(Au_\e) \overline{[\Box_g,  A]u_\e}\rp)\varphi\tp{\,dVol}_g\\
& \qquad = \lim_{\e\to 0} -\langle \nu,\p_{x^\mu} g^{\a\gamma} \xi_\a  \xi_\gamma \xi_\beta  X^\beta(m \,\p_{\xi_\mu} \overline m +\overline m\, \p_{\xi_\mu}m)\varphi \sqrt{|g|}\rangle=-\frac 1 2\langle \nu,\p_{x^\mu} g^{\a\gamma} \xi_\a \xi_\gamma \xi_\beta \p_{\xi_\mu}\Phi^\beta\rangle\,.
\end{align*}

\medskip
\noindent\textbf{Step 3:} putting everything together. Combining the last three computations we find that
$$\langle \nu, -g^{\a\gamma}\xi_\a(\xi_\beta\n_{x^\gamma} \Phi^\beta)
+\frac 1 2 \p_{x^\mu} g^{\a\gamma}\xi_\a\xi_\gamma(\xi_\beta \p_{\xi_\mu}\Phi^\beta)\rangle = \langle\Re\lambda,\xi_0\Phi\rangle.$$
The left-hand side can be simplified further: note that, as $\nabla_\mu$ is the Levi-Civita connection,
$$0=\nabla_\mu g^{\a\gamma} = \p_{x^\mu} g^{\a\gamma} +\delta^\a_\beta\Gamma^\beta_{\mu\delta} g^{\delta\gamma} +\delta^\gamma_\beta\Gamma^\beta_{\mu\delta} g^{\a\delta}\quad \implies \quad
\frac 1 2 \p_{x^\mu} g^{\a\gamma} \xi_\a\xi_\gamma +g^{\a\gamma}\xi_\a\xi_\beta \Gamma^{\beta}_{\gamma\mu}=0.$$
Combining this identity with the two equations
$$\p_{\xi_\mu} \Phi = \xi_\beta \p_{\xi_\mu} \Phi^\beta + \Phi^\mu,
\qquad
\p_{x^\gamma} \Phi = \xi_\beta \n_{x^\gamma} \Phi^\beta - \Gamma^{\beta}_{\gamma\mu} \Phi^\mu \xi_\beta$$
we find that 
\begin{equation*}
\label{eq:claim}
-g^{\a\gamma}\xi_\a(\xi_\beta\n_{x^\gamma} \Phi^\beta)
+\frac 1 2 \p_{x^\mu} g^{\a\gamma}\xi_\a\xi_\gamma(\xi_\beta \p_{\xi_\mu}\Phi^\beta)
=-g^{\a\gamma}\xi_\a\p_{x^\gamma} \Phi+\frac 1 2\p_{x^\mu} g^{\a\gamma} \xi_\a\xi_\gamma\p_{\xi_\mu} \Phi
=-\{g^{\a\beta}\xi_\a\xi_\beta,\Phi\}
 \,.
\end{equation*}
While the previous calculations hold for an arbitrary vector field $X$, we now take $X=e_0$, so that $X^\beta\xi_\beta = \xi_0 - \beta^i\xi_i$. As before we note that $\xi_0-\beta^i \xi_i$ never vanishes on the zero mass shell, where $\nu$ is supported. Hence, we have shown that part (\ref{it:propagation}) of the theorem holds whenever $\tilde a$ is of the form $\tilde a(x,\xi)=\Phi(x,\xi)=b(x)q(\xi)$ with $q$ real and positively 1-homogeneous. The case of a general test function follows by considerations analogous to the ones in Remark \ref{remark:symbolform}.
\end{proof}

\subsection{Two compensated compactness lemmas}
\label{sec:wave-CC}

This subsection contains two compensated compactness results for solutions of the wave system \eqref{eq:linear-wave-system} which follow readily from the very classical Theorem \ref{thm:p-q-divcurl}. We begin with a \textit{bilinear} result:

\begin{lemma}\label{lemma:compensatedcompactnessWave}
Null forms are weakly continuous, i.e.\
$$
\text{for } I=1,2,\quad
\begin{rcases}
u_\e^{I}\w u^I \text{ in } W_\tp{loc}^{1,2}\\
(\Box_g u_\e^I)_\e \text{ is compact in } W^{-1,2}_\tp{loc}
\end{rcases} 
\implies 
g^{-1}(\tp d u_\e^1, \d u_\e^2) \wstar g^{-1}(\tp d u^1, \d u^2) \tp{ in } \mathscr D'.
$$
\end{lemma}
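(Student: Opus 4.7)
The plan is to reinterpret the null form $g^{-1}(\tp{d} u_\e^1,\tp{d} u_\e^2)$ as a wedge product of differential forms of complementary degrees, and then apply the generalized div-curl lemma (Theorem~\ref{thm:p-q-divcurl}) with $p_1=p_2=2$.

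First, I would set
\[
\omega_{1,\e}\equiv \tp{d} u_\e^1, \qquad \omega_{2,\e}\equiv \star_g\, \tp{d} u_\e^2,
\]
where $\star_g$ is the Hodge star with respect to the fixed smooth Lorentzian metric $g$. These are, respectively, a 1-form and an $n$-form on $\mc{M}\subset \R^{1+n}$, so $k_1+k_2 = 1+n = N$, meeting the codimension condition of Theorem~\ref{thm:p-q-divcurl}. The weak convergences $\omega_{i,\e}\w\omega_i$ in $L^2_\tp{loc}$ follow from the hypothesis $u_\e^I\w u^I$ in $W^{1,2}_\tp{loc}$ together with the fact that $\star_g$ has smooth, $\e$-independent coefficients and hence commutes with weak limits in $L^2_\tp{loc}$.

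Next, I would verify the two differential constraints. Clearly $\tp{d}\omega_{1,\e}=\tp{d}^2 u_\e^1=0$, which is trivially compact in $W^{-1,2}_\tp{loc}$. For the other constraint, the divergence formulation \eqref{eq:covariant-wave-op} of $\Box_g$ is exactly the identity
\[
\tp{d}\star_g\tp{d}\, u = (\Box_g u)\,\tp{dVol}_g.
\]
Since $(\Box_g u_\e^2)_\e$ is compact in $W^{-1,2}_\tp{loc}$ by hypothesis, and since multiplication by the smooth form $\tp{dVol}_g$ preserves compactness in $W^{-1,2}_\tp{loc}$, I conclude $(\tp{d}\omega_{2,\e})_\e$ is also compact in $W^{-1,2}_\tp{loc}$. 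Theorem~\ref{thm:p-q-divcurl} then yields
\[
\omega_{1,\e}\wedge \omega_{2,\e}\wstar \omega_1\wedge\omega_2 \qquad \text{in }\mathscr{D}'(\mc{M}).
\]

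To finish, I invoke the standard pointwise identity $\alpha\wedge\star_g\beta = g^{-1}(\alpha,\beta)\,\tp{dVol}_g$ for any two 1-forms $\alpha,\beta$, which recasts the previous convergence as
\[
g^{-1}(\tp{d} u_\e^1,\tp{d} u_\e^2)\,\tp{dVol}_g \wstar g^{-1}(\tp{d} u^1,\tp{d} u^2)\,\tp{dVol}_g \qquad \text{in }\mathscr{D}'(\mc M),
\]
and dividing by the smooth, strictly positive density $\sqrt{|g|}$ gives the claim. There is no serious obstacle here; the only mildly fussy point is bookkeeping signs in the definition of $\star_g$ in Lorentzian signature, which does not affect the compactness argument in any substantive way. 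The nontrivial geometric content is entirely packaged into Theorem~\ref{thm:p-q-divcurl}, whose $L^2$-version can, if desired, be proved directly using the H-measure framework recalled in Section~\ref{sec:existence-characterization-H-measure}.
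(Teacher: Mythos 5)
Your proof is correct and follows essentially the same route as the paper's: rewrite the null form as a wedge product of $\tp{d}u_\e$ with a Hodge dual and invoke Theorem~\ref{thm:p-q-divcurl}. The only (immaterial) difference is that the paper first reduces to the diagonal case $u_\e^1=u_\e^2$ via the polarization identity, whereas you apply the div-curl lemma directly to the mixed pair $\lp(\tp{d}u_\e^1,\star_g\tp{d}u_\e^2\rp)$, which is slightly more direct.
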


\begin{proof}
It suffices to consider the case $u_\e^1 = u_\e^2$: indeed, one can use the polarization identity
\begin{align*}
g^{-1}(\tp d u_\e^1, \d u_\e^1) + 2 g^{-1}(\tp d u_\e^1, \d u_\e^2)+ g^{-1}(\tp d u_\e^2, \d u_\e^2)=
g^{-1}(\tp d u_\e^1+\d u_\e^2, \d u_\e^1+\d u_\e^2)
\end{align*}
and pass to the limit on both sides to see that $g^{-1}(\tp d u_\e^1, \d u_\e^2)\wstar g^{-1}(\tp d u^1, \d u^2)$ in the sense of distributions.
We thus drop all superscripts from the sequences.

Let $\star$ be the Hodge star with respect to the metric $g^{-1}$. We have
$$g^{-1}(\tp d u_\e,\d u_\e) \d \tp{Vol}_{g^{-1}}= \d u_\e\wedge (\star \d  u _\e)$$
and, since $u_\e$ is scalar, $\Box_g u_\e = \star \d \star \tp{d} u_\e$. The conclusion follows from Theorem \ref{thm:p-q-divcurl}.
\end{proof}

The next result is \textit{trilinear} and was essentially known to Tartar: see \cite[Lemma I.5]{Tartar2005}, where it is proved when $g$ is the Minkowski metric. The proof given below is the natural adaptation of Tartar's proof, now in the language of geometric wave equations introduced at the beginning of the section. See also  \cite[Proposition 12.2]{Huneau2019} for an alternative proof.

\begin{lemma}\label{lemma:trilinear}
Let $X$ be a smooth vector field. Then 
$$
\text{for } I=1,2,3,\quad \begin{rcases}
u^I_\e\w 0 \text{ in } W^{1,3}_\tp{loc}\\
(\Box_g u^I_\e)_\e \text{ is bounded in } L^3_\tp{loc}
\end{rcases}
\implies 
X u^1_\e\, g^{-1}(\tp d u^2_\e,\d u^3_\e)\wstar 0 \quad \tp{ in } \mathscr D'.$$
\end{lemma}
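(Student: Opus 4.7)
My strategy is classical (essentially due to Tartar): combine the energy identity \eqref{eq:energyidentity} applied to the pair $(u^2_\e, u^3_\e)$ with a purely algebraic rearrangement. A direct computation from \eqref{eq:defTJ} yields
\begin{equation*}
\p^\a u^1\, J^X_\a[u^2, u^3] = \tfrac{1}{2}\lp[Xu^2\, g^{-1}(\tp{d} u^1,\tp{d} u^3) + Xu^3\, g^{-1}(\tp{d} u^1,\tp{d} u^2) - Xu^1\, g^{-1}(\tp{d} u^2,\tp{d} u^3)\rp].
\end{equation*}
Multiplying the energy identity by $u^1_\e$, rewriting $u^1_\e\,\n^\a J^X_\a = \n^\a(u^1_\e J^X_\a) - \p^\a u^1_\e\, J^X_\a$, and substituting the algebraic identity gives the pointwise formula
\begin{align*}
Xu^1_\e\, g^{-1}(\tp{d} u^2_\e,\tp{d} u^3_\e)
&= Xu^2_\e\, g^{-1}(\tp{d} u^1_\e,\tp{d} u^3_\e) + Xu^3_\e\, g^{-1}(\tp{d} u^1_\e,\tp{d} u^2_\e) \\
&\quad - 2\n^\a\lp(u^1_\e J^X_\a[u^2_\e, u^3_\e]\rp) + u^1_\e\lp(Xu^2_\e\,\Box_g u^3_\e + Xu^3_\e\,\Box_g u^2_\e\rp) + 2u^1_\e\, T_{\a\beta}[u^2_\e, u^3_\e]\,\n^\a X^\beta.
\end{align*}

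Next I would integrate this identity against an arbitrary test function $\varphi\in C^\infty_c(\mc M)$ with respect to $\tp{d}\tp{Vol}_g$ and pass to the limit $\e\to 0$. Rellich--Kondrachov together with $u^I_\e\w 0$ in $W^{1,3}_\tp{loc}$ gives strong convergence $u^I_\e\to 0$ in $L^3_\tp{loc}$; combining this with the $W^{1,3}_\tp{loc}$-boundedness of the $u^I_\e$ (which makes $T_{\a\beta}[u^2_\e, u^3_\e]$ and $J^X_\a[u^2_\e, u^3_\e]$ bounded in $L^{3/2}_\tp{loc}$) and the $L^3_\tp{loc}$-bound on $\Box_g u^I_\e$, H\"older's inequality kills in the limit every term on the right-hand side except the first two: each of the remaining terms carries a factor of $u^1_\e$ multiplied by something bounded in $L^{3/2}_\tp{loc}$, once the divergence term has been integrated by parts onto $\varphi$. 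Writing $T^{ijk}_\tp{lim}\equiv \lim_{\e\to 0}\int \varphi\, Xu^i_\e\, g^{-1}(\tp{d} u^j_\e,\tp{d} u^k_\e)\,\tp{d}\tp{Vol}_g$, the identity thus reduces to $T^{123}_\tp{lim} = T^{213}_\tp{lim} + T^{312}_\tp{lim}$.

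Repeating the whole argument after permuting the roles of $u^1, u^2, u^3$ (and using that $g^{-1}$ is symmetric in its two arguments) yields the two cyclic analogues $T^{213}_\tp{lim} = T^{123}_\tp{lim} + T^{312}_\tp{lim}$ and $T^{312}_\tp{lim} = T^{123}_\tp{lim} + T^{213}_\tp{lim}$. Summing these three linear equations forces $T^{123}_\tp{lim} + T^{213}_\tp{lim} + T^{312}_\tp{lim} = 0$, and substituting back into any one of them yields $T^{123}_\tp{lim} = 0$; since $\varphi$ was arbitrary, this is exactly the claimed weak-$*$ convergence. The main conceptual point---and the reason the bilinear Lemma~\ref{lemma:compensatedcompactnessWave} does not suffice---is that although $g^{-1}(\tp{d} u^2_\e,\tp{d} u^3_\e)\wstar 0$ in $\mathscr{D}'$, one cannot simply pair this weak limit against the weakly (but not strongly) convergent factor $Xu^1_\e$; the cyclic symmetrization bypasses this obstruction by reducing the trilinear limit to a $3\times 3$ linear system. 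On the technical side, the $L^3_\tp{loc}$-bound on $\Box_g u^I_\e$ (rather than merely $L^{3/2}_\tp{loc}$, which would follow automatically from $u^I_\e\in W^{1,3}_\tp{loc}$) is precisely what ensures the junk terms $u^1_\e\, Xu^i_\e\,\Box_g u^j_\e$ are integrable, and is therefore the one place where the hypothesis strength matters.
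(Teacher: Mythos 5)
Your proposal is correct. It rests on the same structural insight as the paper's proof --- the algebraic identity expressing the trilinear null form as a contraction of the current $J^X$ with a gradient, together with the energy identity \eqref{eq:energyidentity} to control $\n^\a J^X_\a$ --- but it closes the argument by a genuinely different route. The paper first polarizes to reduce to the case $u^2_\e=u^3_\e$, then recognizes $2J^X_\a[u^1_\e,u^2_\e]\,\p_\beta u^2_\e\,g^{\a\beta}$ as a single div--curl pairing ($J^X$ bounded in $L^{3/2}_\tp{loc}$ with divergence compact in $W^{-1,3/2}_\tp{loc}$, paired with the exact form $\d u^2_\e\w 0$ in $L^3_\tp{loc}$) and invokes the generalized div--curl lemma, Theorem~\ref{thm:p-q-divcurl}, once. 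You instead pair $\d u^1_\e$ against $J^X[u^2_\e,u^3_\e]$, prove the relevant div--curl statement by hand (multiplying the energy identity by $u^1_\e$, integrating by parts, and using the compact embedding $W^{1,3}_\tp{loc}\hookrightarrow\hookrightarrow L^3_\tp{loc}$ to kill every term carrying an undifferentiated $u^1_\e$ against an $L^{3/2}_\tp{loc}$-bounded factor), obtaining the single relation $T^{123}=T^{213}+T^{312}$, and then cyclically symmetrize to get a $3\times 3$ linear system whose only solution is $T^{123}=T^{213}=T^{312}=0$. What each approach buys: the paper's version is shorter given Theorem~\ref{thm:p-q-divcurl} as a black box and avoids the permutation bookkeeping via polarization; yours is more self-contained (the only external input is Rellich) and makes explicit exactly where the $L^3_\tp{loc}$ bound on $\Box_g u^I_\e$ enters, namely to place $u^1_\e\,Xu^i_\e\,\Box_g u^j_\e$ in $L^1_\tp{loc}$ with one factor converging strongly. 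Two minor points of hygiene: you should note that the limits $T^{ijk}_\tp{lim}$ exist only after passing to a subsequence (boundedness in $L^1_\tp{loc}$ gives weak-$*$ compactness as measures, and the subsequence principle then upgrades the conclusion to the full sequence --- consistent with the paper's standing convention of working modulo subsequences), and your linear system uses the symmetry $T^{321}_\tp{lim}=T^{312}_\tp{lim}$ coming from the symmetry of $g^{-1}$, which you correctly flag.
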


\begin{proof}
The assumptions imply that the sequence $J^X_\a[u^1_\e,u^2_\e]$ is bounded in $L^{3/2}_\tp{loc}$ and, recalling \eqref{eq:energyidentity}, that $\nabla^\a J^X_\a[u^1_\e,u^2_\e]$  is compact in $W^{-1,3/2}_\tp{loc}$. We note that  
$$
2 J_\a^X[u^1, u^2] \p_\beta u^3\, g^{\a \beta}
=
X u^1 \, g^{-1}(\tp{d} u^2, \d u^3)+
X u^2 \, g^{-1}(\tp{d} u^1, \d u^3)-
X u^3 \, g^{-1}(\tp{d} u^1, \d u^2)\,,
$$
where the left-hand side is a div-curl product. Using the polarization identity, as in Lemma \ref{lemma:compensatedcompactnessWave}, to prove the conclusion we can take $u^2_\e=u^3_\e$ without loss of generality. Thus
$$
2 J_\a^X[u^1_\e, u^2_\e] \p_\beta u^2_\e\, g^{\a \beta}=X u^1_\e \, g^{-1}(\tp{d} u^2_\e, \d u^2_\e)
$$
or, equivalently, writing again $\star$ for the Hodge star with respect to $g^{-1}$,
$$
X u^1_\e \, g^{-1}(\tp{d} u^2_\e, \d u^2_\e) \d \tp{Vol}_{g^{-1}}
=
g^{-1}(2J^X[u^1_\e,u^2_\e], \d u^2_\e) \d \tp{Vol}_{g^{-1}} =
 2J^X[u_\e^1, u_\e^2] \wedge \star \d u^2 _\e\,.$$
Since $\d u^2_\e\w 0$ in $L^3_\tp{loc}$, we can again use Theorem \ref{thm:p-q-divcurl} to pass to the limit.
\end{proof}

\begin{remark}\label{remark:constantrank}
Taking $u^1_\e=u^2_\e=u^3_\e$ in Lemma \ref{lemma:trilinear}, we note that the trilinear quantity is weakly continuous \textit{solely at zero}.
That this happens is only possible  because $\Box_g$, thought of as a first-order operator acting on $\tp d u$, does not have constant rank. Indeed, it is shown in \cite{Guerra2019} that, under constant rank constraints, nonlinearities which are weakly continuous at a point are necessarily weakly continuous \textit{everywhere}. Furthermore, regardless of rank conditions, nonlinearities which are weakly continuous everywhere are polynomials with degree not exceeding the dimension of the domain, i.e.\ $n+1$, see also \cite{Murat1981}. In contrast,  Lemma \ref{lemma:trilinear} is of course valid even when $n=1$. See also \cite{Joly1995,Muller2000} for other trilinear Compensated Compactness results without constant rank assumptions.
\end{remark}

\section{The linear covariant wave equation with oscillating coefficients}
\label{sec:linear-wave-oscillating}

This section is devoted to the proof of Theorem \ref{thm:linear-wave-oscillating}. Our strategy is to reduce the analysis of the limiting behavior of sequences of solutions to
\begin{equation}
\Box_{g_\e} u_\e =f_\e\,,  \qquad u_\e,f_\e \colon (0,T)\times\mathbb{R}^n\to \R\, \label{eq:linear-wave-system-2}
\end{equation}
to the case where $(u_\e,f_\e)$ are solutions of a fixed wave equation, as in the previous section. 
Hence, we will frequently recast \eqref{eq:linear-wave-system-2} in the form of \eqref{eq:linear-wave-system}, i.e.\
\begin{align}
\Box_{g}u_\varepsilon= \lp(\Box_{g}-\Box_{g_\varepsilon}\rp)u_\varepsilon+f_\e\,, \label{eq:reduction-linear-to-nonoscillating}
\end{align}
Note that, by Hypothesis \ref{hyp:main}\eqref{it:hypsols},
\begin{equation}
\label{eq:bddbox}
\Box_g u_\e \tp{ is uniformly bounded in } L^4_\tp{loc}.
\end{equation}

We begin by noting that part (\ref{it:lin-wave-eq}) of Theorem \ref{thm:linear-wave-oscillating} poses no difficulty, as the covariant wave operator is an operator in divergence form. For later use, we state the result explicitly:

\begin{proposition}[Limit equation] \label{prop:limit-equation-LinWaveOsc}
Let $(g_\e,u_\e,f_\e)_\e$ be a sequence satisfying Hypotheses~\ref{hyp:main} and solving \eqref{eq:linear-wave-system-2}. Then $\Box_g u = f$ in the sense of distributions.
\end{proposition}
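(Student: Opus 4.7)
The plan is to exploit the divergence form of the covariant wave operator, which allows all the nonlinear dependence on $g_\e$ to be handled via uniform convergence, while the sole weakly convergent quantity $\partial u_\e$ enters linearly and is paired against a strongly convergent coefficient.

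Concretely, I would rewrite \eqref{eq:linear-wave-system-2} in the non-conformal divergence form \eqref{eq:covariant-wave-op}, namely
\begin{equation*}
\partial_\alpha\!\left(\sqrt{|g_\e|}\, g_\e^{\alpha\beta}\, \partial_\beta u_\e\right) = \sqrt{|g_\e|}\, f_\e,
\end{equation*}
and test against an arbitrary $\varphi \in C_c^\infty(\mc M)$ to obtain
\begin{equation*}
-\int_{\mc M} \sqrt{|g_\e|}\, g_\e^{\alpha\beta}\, \partial_\beta u_\e\, \partial_\alpha \varphi \, \d x \;=\; \int_{\mc M} \sqrt{|g_\e|}\, f_\e\, \varphi \, \d x.
\end{equation*}

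Next I would pass to the limit term by term. By Hypotheses~\ref{hyp:main}\eqref{it:hypmetrics}, the eigenvalues of $g_\e$ are uniformly bounded above and away from zero and $g_\e \to g$ in $C^0_\tp{loc}$; consequently $\sqrt{|g_\e|}\, g_\e^{\alpha\beta} \to \sqrt{|g|}\, g^{\alpha\beta}$ uniformly on $\supp \varphi$. By Hypotheses~\ref{hyp:main}\eqref{it:hypsols}, $\partial_\beta u_\e \wto \partial_\beta u$ in $L^4_\tp{loc}$, and by \eqref{it:hypsource}, $f_\e \wto f$ in $L^4_\tp{loc}$. Since $\partial_\alpha \varphi$ is compactly supported and smooth, the strong-times-weak principle gives convergence of both sides to
\begin{equation*}
-\int_{\mc M} \sqrt{|g|}\, g^{\alpha\beta}\, \partial_\beta u\, \partial_\alpha \varphi \, \d x \;=\; \int_{\mc M} \sqrt{|g|}\, f\, \varphi \, \d x,
\end{equation*}
which is precisely the distributional formulation of $\Box_g u = f$.

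There is no real obstacle here: the proposition is essentially a statement that the divergence structure of $\Box_g$ is compatible with the very weak notion of convergence provided by Hypotheses~\ref{hyp:main}, and it is in sharp contrast with the subtler task of identifying the weak limit of the \emph{quadratic} quantity $\mb L_{\alpha\beta}[u_\e]$, which is the main content of Theorem~\ref{thm:linear-wave-oscillating}\eqref{it:thm-lin-wave-energy-density}. The only minor point to note is that the test class $C_c^\infty$ is dense enough to conclude distributional equality on all of $\mc M$, which is immediate.
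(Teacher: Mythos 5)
Your proof is correct and follows essentially the same route as the paper's: both exploit the divergence structure of $\Box_{g}$, integrate by parts once against a test function, and pass to the limit via the product of the uniformly convergent coefficient $\sqrt{|g_\e|}\,g_\e^{\alpha\beta}$ with the weakly convergent $\p_\beta u_\e$ (and likewise for $f_\e$). The only cosmetic difference is that the paper phrases the integration by parts with respect to $\d\mathrm{Vol}_{g_\e}$ rather than writing out the flat divergence form explicitly.
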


\begin{proof}
Note that $\Box_{g_\e} u_\e\wstar \Box_{g} u$ in $\mathscr D'$, and hence by Hypotheses \ref{hyp:main}\eqref{it:hypsource} also weakly in $L^2_\tp{loc}$. Indeed, take a test function $\varphi$; then, using the local uniform convergence of $g_\e$ and integrating by parts, we find that
\begin{align*}
\lim_{\e \to 0} \int \varphi \,\Box_{g_\e} u_\e \d\tp{Vol}_{g} 
& = \lim_{\e \to 0} \int \varphi \,\Box_{g_\e} u_\e \d\tp{Vol}_{g_\e}\\
& =- \lim_{\e \to 0} \int\p_\a \varphi\, \p_\beta u_\e \, g_\e^{\a\beta}\d \tp{Vol}_{g_\e}
=  -\int \p_\a \varphi\, \p_\beta u\, g^{\a\beta}\d \tp{Vol}_{g}
=\int \varphi\, \Box_{g} u\, \d \tp{Vol}_{g},
\end{align*}
since we have the product of weakly convergent terms with strongly convergent ones. Recall that $\d \tp{Vol}_{g}\equiv \sqrt{|\det g|} \d x$ and \eqref{eq:covariant-wave-op}. As $\Box_{g_\e} u_\e = f_\e \w f$ in $L^2_\tp{loc}$, by uniqueness of limits we see that $\Box_g u = f$.
\end{proof}

For part (\ref{it:thm-lin-wave-energy-density}), our starting point is identity \eqref{eq:reduction-linear-to-nonoscillating}. We set
\begin{align*}
h_\e^{\alpha\beta}\equiv g_\e^{\alpha\beta}-g^{\alpha\beta}\,, \qquad H_\e \equiv \lp(\Box_{g_\e}-\Box_{g}\rp)u_\varepsilon\,;
\end{align*}
by \eqref{eq:bddbox} and Proposition~\ref{prop:limit-equation-LinWaveOsc}, $H_\e$ converges weakly in $L^2_\tp{loc}$ to zero. Besides the H-measures defined in \eqref{eq:defHmeasureintro}, we will need the H-measure generated when $\tp d u_\e$ is combined with the right-hand side in \eqref{eq:reduction-linear-to-nonoscillating}:
$$
\lp(\p_0 u_\e,\p_1 u_\e, \dots, \p_n u_\e, H_\e + f_\e\rp)_\e \wH 
\begin{bmatrix}
\tilde \nu & \tilde \sigma \\
\tilde \sigma^* & \star
\end{bmatrix}
\,.
$$

\subsection{Elementary reductions}
\label{sec:oscillating-wave-elementary-reductions}

Before proceeding with the core of the proof, we make a few basic observations. Firstly, both the structure of the H-measure and the localization part of Theorem \ref{thm:linear-wave-oscillating}\eqref{it:thm-lin-wave-energy-density} follow  as in Section \ref{sec:linear-wave-non-oscillating} since, by \eqref{eq:bddbox}, $\Box_g u_\e$ is bounded in $L^2_\tp{loc}$. Likewise, $\tilde \sigma_\gamma = \xi_\gamma \sigma$ for some Radon measure $\sigma$.
Moreover, arguing once more as in Section \ref{sec:linear-wave-non-oscillating}, we can and will assume that the sequence $u_\e$ is supported  on a fixed bounded set $\Omega$. Hence we can and will also assume that $g_\e=g$ for all $\e$, outside a neighborhood of $\Omega$. 

The final remark that we make here concerns the parity in $\xi$ of equation \eqref{eq:propagationprop}: according to the parity of $\nu$ and $\lambda$, established in Theorem \ref{thm:linearwave}, we  only need to test \eqref{eq:propagationprop} against 1-homogeneous functions $\tilde a$ which are odd in $\xi$, which corresponds to testing against symbols $a$ which are 0-homogeneous and even in $\xi$. In particular, in the rest of the proof we will use implicitly the following straightforward lemma:

\begin{lemma}\label{lemma:hypsymbol}
For $A\in \Psi^0$ such that
\begin{equation}
\label{eq:hypsymbol}
\sigma^0(A)(x,\xi) \text{ is real and even in } \xi\,,
\end{equation} 
$A\varphi$ and $A^*\varphi$ are real whenever $\varphi\in L^2$ is real.
\end{lemma}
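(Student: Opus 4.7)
The plan is to fix a representative of the equivalence class of $\Psi^0$ operators sharing the given principal symbol $\sigma^0(A)$ whose \emph{full} symbol is itself real and even in $\xi$; for such a representative, the conclusion reduces to a Fourier symmetry argument. Given $\sigma^0(A)$ satisfying \eqref{eq:hypsymbol}, multiply by a real and even cutoff $\chi \in C^\infty(\R^{1+n})$ vanishing near the origin and equal to $1$ for $|\xi| \geq 1$, so that $a := \chi\sigma^0(A) \in S^0$ is real and even in $\xi$. Any other representative of the class differs from the corresponding $A$ by an operator in $\Psi^{-1}$, which is compact on $L^2_\tp{loc}$ by Lemma \ref{lemma:continuity} and therefore irrelevant for the H-measure computations in Sections \ref{sec:linear-wave-non-oscillating} and \ref{sec:linear-wave-oscillating}.

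With this choice, reality of $A\varphi$ for real $\varphi$ follows from a short Fourier calculation. Using $\hat\varphi(-\xi) = \overline{\hat\varphi(\xi)}$ (since $\varphi$ is real), the reality of $a$, and the evenness of $a(x,\cdot)$ in $\xi$, the change of variables $\xi \mapsto -\xi$ in the defining integral for $A\varphi(x)$ from Section \ref{sec:symbols-psidos} shows at once that $\overline{A\varphi(x)} = A\varphi(x)$.

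The corresponding statement for $A^*$ is a formal consequence of the identity $\overline{Au} = A\bar u$ just established. For real $\varphi$ and arbitrary $u \in L^2$, pairing the defining relation $\langle u, A^*\varphi\rangle = \langle Au, \varphi\rangle$ with conjugation and repeatedly using reality of $\varphi$ and $\overline{Au}=A\bar u$, one obtains the chain of equalities $\int u \cdot \overline{A^*\varphi}\, dx = \overline{\int A\bar u \cdot \varphi\, dx} = \int u \cdot A^*\varphi\, dx$. Since $u$ is arbitrary, this forces $\overline{A^*\varphi} = A^*\varphi$.

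The only real obstacle here is conceptual rather than computational: the claim is strictly false for some representatives in $\Psi^0$, so one must recognize that the tacit convention in Sections \ref{sec:linear-wave-non-oscillating}--\ref{sec:linear-wave-oscillating}, namely that the H-measure sees only the principal symbol, permits us to fix the representative as described above, after which the proof becomes elementary.
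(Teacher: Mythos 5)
Your proof is correct, and it is essentially the argument the paper leaves implicit (the same Plancherel/change-of-variables parity computation already appears in the proof of Theorem \ref{thm:linearwave}(\ref{it:parity}), and the paper explicitly labels Lemma \ref{lemma:hypsymbol} as straightforward and omits its proof). Your observation that the statement only holds after fixing a representative with real, even \emph{full} symbol --- the discrepancy being a $\Psi^{-1}$, hence compact, operator that is invisible to the H-measure computations --- is exactly the right reading of how the lemma is used in Sections \ref{sec:linear-wave-non-oscillating}--\ref{sec:linear-wave-oscillating}.
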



Due to Theorem~\ref{thm:linearwave} our task is to show that, as $\e\to 0$, $H_\e$ does not contribute to the transport equation.

\subsection{A warm-up: the case of strong convergence of the metrics}
\label{sec:easycase}

In this section we show that if we knew that $g_\e\to g$ \textit{strongly} in $W^{1,\infty}_\tp{loc}$ then Theorem~\ref{thm:linear-wave-oscillating} would follow easily. The first step is a reduction to estimating some commutators. The basic idea is to integrate by parts in order to try to distribute the derivatives in such a way that two derivatives do not land on the same term; this cannot be achieved completely, but the remaining terms have a commutator structure.

\begin{lemma} \label{lemma:commutator-reduction-easy} Let $A\in\Psi_{1,c}^0$ satisfy \eqref{eq:hypsymbol}. If $g_\e\to g$ in $W^{1,\infty}_\tp{loc}$ and Hypotheses \ref{hyp:main}\eqref{it:hypsols} hold, then
$$2\lim_{\e\to 0}\langle H_\e, A \p_\gamma (u_\e-u)\rangle = 
\lim_{\e\to 0} \int \p_\gamma (u_\e-u) \lp[A, h_\e^{\alpha\beta}\rp]\p_{\alpha\beta}^2(u_\e-u) \d x\,.$$ 
\end{lemma}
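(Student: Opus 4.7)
The plan is to show that each side equals $2\mathcal{I}_\e+o(1)$, where
$\mathcal{I}_\e := \int h_\e^{\alpha\beta}\p_{\alpha\beta}^2 w_\e \cdot A\p_\gamma w_\e \d x$ with $w_\e := u_\e - u$.
The three main tools are the coordinate expansion $\Box_g v = g^{\alpha\beta}\p_{\alpha\beta}^2 v + b^\alpha(g,\p g)\p_\alpha v$ of the covariant wave operator; the pseudo-differential calculus of Section~\ref{sec:symbols-psidos}, in particular $A^*-A\in\Psi^{-1}$ (since $\sigma^0(A)$ is real) and $[\p_\beta,A]\in\Psi^0$ (by Lemma~\ref{lemma:expansions}); and the symmetry $h_\e^{\alpha\beta}=h_\e^{\beta\alpha}$.

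For the left-hand side I would write $H_\e = h_\e^{\alpha\beta}\p_{\alpha\beta}^2 u_\e + B_\e^\alpha\p_\alpha u_\e$, where $B_\e$ collects the first-order coefficients and vanishes in $L^\infty_\tp{loc}$ by the strong $W^{1,\infty}_\tp{loc}$-convergence of $g_\e$, so its contribution is $o(1)$. Splitting $\p^2 u_\e=\p^2 w_\e+\p^2 u$, the leftover pairing $\langle h_\e^{\alpha\beta}\p_{\alpha\beta}^2 u,A\p_\gamma w_\e\rangle$ is handled by observing that $h_\e^{\alpha\beta}A\p_\gamma w_\e$ is bounded in $W^{1,4/3}_\tp{loc}$ (using Hypotheses~\ref{hyp:main}\eqref{it:hyprates} together with pseudo-differential boundedness of $A$ on Sobolev spaces) while $\to 0$ in $L^{4/3}_\tp{loc}$ (since $h_\e\to 0$ uniformly), hence $\wto 0$ in $W^{1,4/3}_\tp{loc}$; the pairing with $\p^2 u\in W^{-1,4}_\tp{loc}$ then vanishes by duality, without any a priori regularity on $u$. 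This yields $\langle H_\e,A\p_\gamma w_\e\rangle=\mathcal{I}_\e+o(1)$. For the right-hand side, decomposing $[A,h_\e^{\alpha\beta}]=Ah_\e^{\alpha\beta}-h_\e^{\alpha\beta}A$ splits the integral into $R_1-R_2$: the term $R_1$ reduces to $\mathcal{I}_\e+o(1)$ by moving $A$ to $A^*$ and using that $(A^*-A)\p_\gamma w_\e\to 0$ strongly in $L^2_\tp{loc}$ (since $\Psi^{-1}_c$ is compact on $L^2_\tp{loc}$ by Rellich), paired with the $L^4_\tp{loc}$-bounded quantity $h_\e^{\alpha\beta}\p_{\alpha\beta}^2 w_\e$; the term $R_2$ requires IBP in $\beta$ (discarding the $\p_\beta h_\e$ contribution and the $[\p_\beta,A^*]\in\Psi^0$ commutator applied to $h_\e\p_\gamma w_\e\to 0$ in $L^2_\tp{loc}$), then relabelling $\alpha\leftrightarrow\beta$ via the symmetry of $h_\e$, then IBP in $\gamma$, whereby it matches (up to sign) the form produced by IBP in $\beta$ directly on $\mathcal{I}_\e$, giving $R_2=-\mathcal{I}_\e+o(1)$.

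The main obstacle is that neither $\p^2 w_\e$ nor $\p^2 u$ is individually controlled in $L^2_\tp{loc}$; only the combination $h_\e^{\alpha\beta}\p_{\alpha\beta}^2 w_\e\in L^4_\tp{loc}$ is bounded, via the rate assumption. Every IBP must therefore be arranged so that any derivative falling on $h_\e$, or any $[\p,A]$ commutator activated, comes paired with a factor that is $o(1)$ in $L^\infty_\tp{loc}$ or $L^2_\tp{loc}$---two unbounded objects are never multiplied directly. Although the Calderón commutator estimate (Theorem~\ref{thm:Calderon}) yields a quicker conclusion in the present strong-convergence regime, the integration-by-parts strategy sketched above is what generalises to the genuinely oscillatory case studied in Section~\ref{sec:linear}.
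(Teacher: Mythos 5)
Your argument is correct and follows essentially the same route as the paper's proof: repeated integration by parts, discarding every term where a derivative lands on $h_\e^{\alpha\beta}$ or a zeroth-order commutator $[\p,A]$ acts on a strongly vanishing factor, and invoking $A^*-A\in\Psi^{-1}$; meeting both sides at the common quantity $\mathcal I_\e$ rather than transforming the left-hand side directly into the commutator is only an organizational difference. Your duality treatment of the cross term $\langle h_\e^{\alpha\beta}\p^2_{\alpha\beta}u, A\p_\gamma(u_\e-u)\rangle$ is in fact slightly more careful than the paper's, which simply declares it $o(1)$.
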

\begin{proof} Since derivatives of the metric coefficients converge strongly,
$H_\e = h_\e^{\alpha\beta}\p_{\alpha\beta}^2u_\e+o_{L^2}(1)$, where $o_{L^2}(1)$ denotes a remainder which is compact in $L^2$. We begin by noting that
\begin{align*}
\langle  h_\e^{\alpha\beta}\p_{\alpha\beta}^2u_\e, A \p_\gamma (u_\e-u)\rangle &=\langle  h_\e^{\alpha\beta}\p_{\alpha\beta}^2(u_\e-u), A \p_\gamma (u_\e-u)\rangle +\langle  h_\e^{\alpha\beta}\p_{\alpha\beta}^2u, A \p_\gamma (u_\e-u)\rangle\\
&=\langle  h_\e^{\alpha\beta}\p_{\alpha\beta}^2(u_\e-u), A \p_\gamma (u_\e-u)\rangle+o(1)\,.
\end{align*}
Now, we evaluate the remaining term, setting $w_\e\equiv u_\e-u$. First, we integrate by parts in $\p_\alpha$:
\begin{align*} 
\langle  h_\e^{\alpha\beta}\p_{\alpha\beta}^2w_\e, A \p_\gamma w_\e \rangle &= 
-\langle \p_\alpha h_\e^{\alpha\beta} \p_\beta w_\e, A \p_\gamma w_\e\rangle 
-\langle h_\e^{\alpha\beta}\p_\beta w_\e,[\p_\alpha,A]\p_\gamma w_\e\rangle
-\langle  h_\e^{\alpha\beta}\p_\beta w_\e, A( \p_{\gamma\alpha}^2 w_\e)\rangle\\
&= -\langle  h_\e^{\alpha\beta}\p_\beta w_\e, A( \p_{\gamma\alpha}^2 w_\e)\rangle +o(1)\,.
\end{align*}
Then, we integrate the remaining term by parts along $\p_\gamma$, and obtain
\begin{align*} 
-\langle  h_\e^{\alpha\beta}\p_\beta w_\e, A\lp(\p_\alpha \p_\gamma w_\e\rp)\rangle &= 
-\langle  h_\e^{\alpha\beta}\p_\beta w_\e, [A,\p_\gamma]\p_\alpha w_\e\rangle 
+\langle \p_\gamma  h_\e^{\alpha\beta}\p_\beta w_\e, A \p_\alpha w_\e\rangle  
+\langle  h_\e^{\alpha\beta}\p_\beta \p_\gamma w_\e, A \p_\alpha w_\e \rangle\\
&= \langle  h_\e^{\alpha\beta}\p_\beta \p_\gamma w_\e, A \p_\alpha w_\e \rangle+o(1)\,.
\end{align*}
Finally, integrating the remaining term along $\p_\beta $, 
\begin{align*}
\langle  h_\e^{\alpha\beta} \p_\beta \p_\gamma w_\e, A\p_\alpha w_\e\rangle &= 
-\langle \p_\beta h_\e^{\alpha\beta} \p_\gamma w_\e, A\p_\alpha w_\e\rangle 
-\langle  h_\e^{\alpha\beta} \p_\gamma w_\e, [\p_\beta,A]\p_\alpha w_\e\rangle
- \langle  h_\e^{\alpha\beta} \p_\gamma w_\e, A \p_{\alpha\beta}^2 w_\e\rangle\\
&=  \langle \p_\gamma w_\e, [A, h_\e^{\alpha\beta}]\p_{\alpha\beta}^2 w_\e\rangle 
- \langle \p_\gamma w_\e, A \lp( h_\e^{\alpha\beta}\p_{\alpha\beta}^2 w_\e\rp)\rangle+o(1)\,. 
\end{align*}
Combining the expressions above yields the identity:
\begin{align*}
\langle  h_\e^{\alpha\beta}\p_{\alpha\beta}^2w_\e, A \p_\gamma w_\e \rangle+\langle \p_\gamma w_\e, A \lp( h_\e^{\alpha\beta}\p_{\alpha\beta}^2 w_\e\rp)\rangle = \langle \p_\gamma w_\e, [A, h_\e^{\alpha\beta}]\p_{\alpha\beta}^2 w_\e\rangle  +o(1)\,.
\end{align*}
Since $A$ has real symbol and hence is self-adjoint, up to a compact operator, we conclude the proof.
\end{proof}

Due to our strong-convergence assumptions, the Calderón commutator immediately yields:
\begin{proposition} For all $ A\in \Psi^0_{1,c}$ satisfying \eqref{eq:hypsymbol},
if $g_\e\to g$ in $W^{1,\infty}_\tp{loc}$ and Hypotheses \ref{hyp:main}\eqref{it:hypsols} hold, then
 \label{prop:case-i-no-metric-osc-contribution}
 $$\lim_{\e\to 0}\langle H_\e, A\p_\gamma (u_\e-u)\rangle = 0\,.$$
\end{proposition}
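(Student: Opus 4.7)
The plan is to combine Lemma~\ref{lemma:commutator-reduction-easy} with the Calderón commutator estimate (Theorem~\ref{thm:Calderon}). First, I would invoke Lemma~\ref{lemma:commutator-reduction-easy} to reduce the claim to showing that, with $w_\e \equiv u_\e - u$,
$$\lim_{\e\to 0} \int \p_\gamma w_\e \, [A, h_\e^{\alpha\beta}] \p_{\alpha\beta}^2 w_\e \d x = 0.$$
A direct application of Theorem~\ref{thm:Calderon} is not possible because it requires a first-order pseudo-differential operator, whereas $A \in \Psi^0_{1,c}$. The key step is therefore to redistribute one derivative inside the commutator by the product rule, yielding the algebraic identity
$$[A, h_\e^{\alpha\beta}] \p_{\alpha\beta}^2 w_\e \;=\; [A \p_\alpha,\, h_\e^{\alpha\beta}] \p_\beta w_\e \;-\; A\bigl( (\p_\alpha h_\e^{\alpha\beta}) \p_\beta w_\e \bigr).$$
Now $A \p_\alpha \in \Psi^1_{1,c}$ is a first-order operator acting only on $\p_\beta w_\e$, precisely the setting of Theorem~\ref{thm:Calderon}.

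Once the identity is in hand, the conclusion is immediate. Theorem~\ref{thm:Calderon} gives
$$\bigl\| [A\p_\alpha, h_\e^{\alpha\beta}] \p_\beta w_\e \bigr\|_{L^2} \;\lesssim\; \|\nabla h_\e^{\alpha\beta}\|_{L^\infty_\tp{loc}} \, \|\p_\beta w_\e\|_{L^2},$$
and the right-hand side vanishes in the limit because $g_\e \to g$ strongly in $W^{1,\infty}_\tp{loc}$ forces $\|\nabla h_\e^{\alpha\beta}\|_{L^\infty_\tp{loc}} \to 0$, while $\p_\beta w_\e$ is uniformly bounded in $L^2_\tp{loc}$ by Hypotheses~\ref{hyp:main}(\ref{it:hypsols}). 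The remainder term is handled by the $L^2$-boundedness of $A$ from Lemma~\ref{lemma:continuity}, together with the same strong convergence of $\nabla h_\e^{\alpha\beta}$. A Cauchy--Schwarz applied to the outer $\p_\gamma w_\e$ factor, which is again uniformly bounded in $L^2_\tp{loc}$, closes the argument.

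I do not anticipate any substantial obstacle here: the entire analytic difficulty of Theorem~\ref{thm:linear-wave-oscillating} has been absorbed into Lemma~\ref{lemma:commutator-reduction-easy}, and under the strong-convergence assumption the Calderón estimate handles what remains. The only delicate point is the derivative reshuffling: it is exactly this trick that bridges the order-one gap between the symbol class of $A$ and the hypothesis of Theorem~\ref{thm:Calderon}, and it is also what foreshadows the real challenge of the following subsections, where merely weak convergence of $\p_0 g_\e$ is available and the Calderón estimate no longer closes.
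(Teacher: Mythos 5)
Your proposal is correct and follows the paper's own proof essentially verbatim: the same reduction via Lemma~\ref{lemma:commutator-reduction-easy}, the same identity $[A,h_\e^{\alpha\beta}]\p^2_{\alpha\beta}w_\e=[A\p_\alpha,h_\e^{\alpha\beta}]\p_\beta w_\e - A\bigl((\p_\alpha h_\e^{\alpha\beta})\p_\beta w_\e\bigr)$ promoting $A$ to a first-order operator, and the same application of the Calder\'on commutator estimate followed by H\"older. No gaps.
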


\begin{proof}
We need only observe that
$$[A,h_\e^{\a\beta}]\p^2_{\a\beta} (u_\e-u) = [A\p_\a,h_\e^{\a\beta}] \p_\beta (u_\e-u) + o_{L^2}(1).$$
By Theorem \ref{thm:Calderon} and the fact that $\Vert \p_\a h_\e^{\a\beta}\Vert_{L^\infty}\to 0$, the $L^2$-norm of the commutator on the right-hand side goes to zero. It now suffices to appeal to Lemma \ref{lemma:commutator-reduction-easy} and use H\"older's inequality.
\end{proof}

\subsection{The general case}

The remainder of this section deals with the more complicated case where we do not know that $g_\e\to g$ strongly in $W^{1,\infty}_\tp{loc}$. Similarly to the previous subsection, we are required to establish the following:

\begin{proposition} \label{prop:case-ii-no-metric-osc-contribution}
 Under Hypotheses~\ref{hyp:main}, for all $ A\in \Psi^0_{1,c}$ satisfying \eqref{eq:hypsymbol},
 $$\lim_{\e\to 0}\langle H_\e, A e_0 (u_\e-u)\rangle = 0\,.$$
\end{proposition}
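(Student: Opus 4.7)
The plan is to follow the three-step strategy outlined at the end of Section~\ref{sec:linear}. First, by an integration by parts argument analogous to Lemma~\ref{lemma:commutator-reduction-easy}, but tailored to $e_0$ rather than $\p_\gamma$, I expect to establish a commutator identity of the form
\begin{equation*}
2\lim_{\e\to 0}\langle H_\e, A e_0(u_\e-u)\rangle = \lim_{\e\to 0}\int \p_\a(u_\e-u)\,[h_\e^{\a\beta},A]\p_\beta e_0(u_\e-u)\d x,
\end{equation*}
where the right hand side is meaningful because $e_0 w_\e\in L^4_{\tp{loc}}$ by Hypotheses~\ref{hyp:main}\eqref{it:hypsols}. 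Crucially, the parity condition~\eqref{eq:hypsymbol} on $\sigma^0(A)$, combined with the reality of $h_\e^{\a\beta}$, ensures that the error terms generated by moving $A$ past differentiations (which are compact in $L^2$) do not accumulate into something uncontrolled, and that only the symmetric part of the commutator survives. Unlike in Section~\ref{sec:easycase}, the Calderón commutator theorem no longer applies directly since $\p_0 g_\e$ only converges weakly.

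Next, I would decompose Fourier space as in Figure~\ref{fig:frequency-space} using a smooth partition of unity adapted to the symbol $\sigma(e_0)(x,\xi)=\xi_0-\beta^i(x)\xi_i$. Let $\Lambda$ be the region where this symbol vanishes. Choose cutoffs $\chi_\tp{low}$ supported in a bounded frequency region, $\chi_\tp{space}$ supported in a conic neighborhood of $\Lambda$ (away from the origin), and $\chi_\tp{time}$ supported away from $\Lambda$ (also away from the origin), summing to the identity. For the low frequency piece $\mc{F}_\tp{low}$, $h_\e^{\a\beta}$ is compact in $L^2_{\tp{loc}}$ because bounded frequencies upgrade weak $L^2$ convergence to strong convergence via Rellich, so that contribution vanishes in the limit. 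For the spatial piece $\mc{F}_\tp{space}$, I would exploit Hypotheses~\ref{hyp:main}\eqref{it:hypmetrics}, namely the $L^2_{\tp{loc}}$ bound on $\delta^{ij}\p_{ij}g_\e^{\a\beta}$: on this conic region, $|\xi_k\xi^k|$ dominates $|\xi|^2$, so dividing by the symbol of $\delta^{ij}\p_{ij}$ and integrating by parts recovers an $L^2$-bounded expression that can be paired against the $L^2_{\tp{loc}}$-compact $w_\e$.

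The main obstacle, and the heart of the proof, is the high-frequency time piece $\mc{F}_\tp{time}$, where $\sigma(e_0)$ is bounded away from zero and so $e_0$ is microlocally elliptic. Here I define a parametrix $E\in \Psi^{-1}_c$ for $e_0$ on this frequency region, write $\chi_\tp{time}(D) h_\e^{\a\beta} = e_0 E (\chi_\tp{time}(D) h_\e^{\a\beta}) + R_\e$ where $R_\e$ is lower order, and then integrate by parts to move the extra $e_0$ derivative onto $u_\e$. At this point the trilinear structure of the integrand---two factors of $w_\e$ and one of $h_\e$---becomes essential: mimicking the algebraic identity \eqref{eq:symbolmanipulation} at the operator level, I want to regroup the derivatives so that whenever two derivatives land on the same $w_\e$ they reconstitute $\Box_g u_\e$ (which is bounded in $L^4_{\tp{loc}}$ by \eqref{eq:bddbox}), and otherwise no more than one derivative falls on $h_\e^{\a\beta}$. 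The parity hypothesis~\eqref{eq:hypsymbol} plays a key role here, as it did in the reduction to \eqref{eq:intro-commutator}, to cancel the ``wrong parity'' terms. Hypotheses~\ref{hyp:main}\eqref{it:hyprates} then closes the estimate via H\"older, since $\|h_\e\|_{L^\infty}\|\p^2 w_\e\|_{L^4}$ times the remaining $L^{4/3}$-bounded factor stays bounded while the coefficient $\|h_\e\|_{L^\infty}$ tends to zero by Hypotheses~\ref{hyp:main}\eqref{it:hypmetrics}. The delicate point will be the bookkeeping required to ensure that the pseudo-differential calculus commutes the symbols in the correct order, with only lower-order errors which are handled by Lemma~\ref{lemma:continuity} and \eqref{eq:bddbox}.
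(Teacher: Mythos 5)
Your architecture is the right one and matches the paper's: reduce to the trilinear commutator $\int \p_\a w_\e\,[A,h_\e^{\a\beta}]\p_\beta e_0 w_\e\,\tp{d}x$ (this is Lemma~\ref{lemma:commutator-reduction}, whose proof is considerably more delicate than the $\p_\gamma$ version because $e_0$ must be integrated by parts using the diagonalized form \eqref{eq:Box-alternative-identity} and the exchange identity \eqref{eq:exchange-e0-i}, but your claim of its conclusion is correct), then split Fourier space into low, spatially-dominated and time-dominated regions, and in the last region invert $e_0$ microlocally and reconstitute $\Box_g u_\e$. However, there is a genuine gap in the decomposition step: you propose a \emph{fixed} ($\e$-independent) partition, with $\chi_{\rm low}$ supported in a bounded region of radius $R$ and conic cutoffs outside it. With such a partition the estimates do not close. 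Writing $\sup\lVert\p^2 w_\e\rVert_{L^4}\lesssim\omega_\e^{-1}$ and $\sup\lVert h_\e^{\a\beta}\rVert_{L^\infty}\lesssim\omega_\e$ (Hypotheses~\ref{hyp:main}\eqref{it:hyprates}), the spatially-dominated region yields at best $\lVert\p w_\e\rVert_{L^4}\lVert\p^2 w_\e\rVert_{L^4}\lVert\mc{P}_{\rm spa}h_\e\rVert_{L^2}\lesssim \omega_\e^{-1}R^{-2}\lVert\delta^{ij}\p^2_{ij}h_\e\rVert_{L^2}\lesssim \omega_\e^{-1}R^{-2}$, which diverges for fixed $R$; similarly, in the time-dominated region the parametrix for $e_0$ only gains a fixed factor $R^{-1}$, and the term involving one spatial derivative of the inverted coefficient is of size $R^{-1}\omega_\e^{-1/2}\to\infty$. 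The paper resolves this by making the partition $\e$-dependent and calibrated against $\omega_\e$: the low/high threshold is placed at $|\xi|\sim\omega_\e^{-\delta_1}$ and the spatial/time split at $|\xi_{\rm spa}|\sim|\xi|^{\delta_2}$ (a non-conic region), with $\delta_1>\tfrac12$ and $\delta_1\delta_2>\tfrac12$; the largeness of the surviving frequencies is precisely what converts the bounds $\lVert\delta^{ij}\p^2_{ij}h_\e\rVert_{L^2}\lesssim 1$ and $\lVert h_\e\rVert_{L^2}\lesssim\omega_\e$ into the decay $\omega_\e^{2\delta_1\delta_2}$ and $\omega_\e^{1+\delta_1}$ needed to beat the loss $\omega_\e^{-1}$ (Lemmas~\ref{lemma:case-ii-no-metric-osc-contribution-lowfreq}--\ref{lemma:case-ii-no-metric-osc-contribution-timefreq}). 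Without this coupling between the cutoff scale and the rate in Hypotheses~\ref{hyp:main}\eqref{it:hyprates}, no choice of $R$ (nor a diagonal argument in $R$ and $\e$) closes the proof.

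A secondary issue: in the spatially-dominated region you propose to divide by the symbol of $\delta^{ij}\p_{ij}$ \emph{and then integrate by parts}, pairing against the compact $w_\e$. Integrating by parts twice there would place three or four derivatives on the factors $w_\e$, which are not controlled under Hypotheses~\ref{hyp:main}. The correct move (Lemma~\ref{lemma:case-ii-no-metric-osc-contribution-spafreq}) is to abandon the commutator structure entirely in that region and estimate $\lVert\mc{P}_{\rm spa}h_\e^{\a\beta}\rVert_{L^2}$ directly by Plancherel, using only H\"older against $\lVert\p w_\e\rVert_{L^4}\lVert\p^2 w_\e\rVert_{L^4}$.
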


Before proceeding further, let us outline the proof of Proposition \ref{prop:case-ii-no-metric-osc-contribution}:
\begin{itemize}[itemsep=0pt]
\item as in Section \ref{sec:easycase}, we start by reducing Proposition~\ref{prop:case-ii-no-metric-osc-contribution} to estimating a commutator (Lemma \ref{lemma:commutator-reduction});
\item we then estimate this commutator by choosing an $\e$-dependent partition of Fourier space and estimating each regime independently (Lemmas \ref{lemma:case-ii-no-metric-osc-contribution-lowfreq} to \ref{lemma:case-ii-no-metric-osc-contribution-timefreq}).
\end{itemize}

In what follows, we use the frame introduced in \eqref{eq:Cauchyframe}, in which we have
\begin{align}
\begin{split}
\Box_g&=g^{00}e_0^2+\tilde{g}^{ij}\p^2_{ij} + \frac12 \p_0 g^{00}e_0+\p_i g^{i\beta}\p_\beta+ \frac{g^{i\beta}\p_i\sqrt{|g|}}{\sqrt{|g|}} \p_\beta -g^{0i}\p_i\lp(\frac{g^{0j}}{g^{00}}\rp)\p_j -\frac{(g^{00})^2}{2 q(\tilde{g}^{ij})}\p_0 \lp(q(\tilde{g}^{ij})\rp) e_0\,,
\end{split}\label{eq:Box-alternative-identity}
\end{align}
where $q=q(\tilde{g}^{ij})$ denotes the polynomial in $\tilde{g}^{ij}$ determined implicitly by $|g^{-1}| = -g^{00}q(\tilde{g}^{ij})$ (the existence of such a polynomial is readily verified by considering the LDU decomposition of the matrix-field $g$).

Under Hypotheses~\ref{hyp:main}, general first derivatives of the metric coefficients do not converge strongly; however, \textit{spatial} first derivatives of the metric coefficients do: since we assume that $g_\e=g$ outside a neighborhood of $\Omega$, by integration by parts and our hypotheses,
\begin{equation}
\lVert \p_k h_\e^{\a\beta}\rVert_{L^2}
\lesssim  
\Vert \p_{kk}^2 h_\e^{\a\beta}\Vert_{L^2}^\frac 1 2 
\Vert h_\e^{\a\beta}\Vert_{L^2}^\frac 1 2
\lesssim \Vert h_\e^{\a\beta}\Vert_{L^2}^\frac 1 2\,, \label{eq:convergence-spatial-derivatives}
\end{equation}
which converges to zero. We recall that $\p_0 (g_\e)_{ij}$, and hence, $\p_0 \tilde{g}_\e^{ij}$ (one may check that $\tilde{g}_\e^{ij}$ is the inverse of the Riemannian metric $(g_\e)_{ij}$) also converge strongly.
It is now easy to see that, under our assumptions, the last four terms in  \eqref{eq:Box-alternative-identity} only involve strongly converging derivatives of the metric coefficients.

 The proof of the next lemma follows the strategy used for Lemma \ref{lemma:commutator-reduction-easy}, but it is much more involved:

\begin{lemma}[Reduction to commutators]\label{lemma:commutator-reduction}
 Under Hypotheses~\ref{hyp:main}, let $A\in\Psi_{1,c}^0$ satisfy \eqref{eq:hypsymbol}. Then,%
$$2\lim_{\e\to 0}\langle H_\e, A e_0 (u_\e-u)\rangle = 
\lim_{\e\to 0} \int \p_\alpha (u_\e-u) [A, h_\e^{\alpha\beta}]\p_\beta e_0 (u_\e-u) \d x
\,.$$ 
\end{lemma}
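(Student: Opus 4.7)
The strategy is to extend the argument of Lemma~\ref{lemma:commutator-reduction-easy} to the oscillating-metric setting. The main new difficulty is that the naive ``$o(1)$ cancellations'' of the easy case now leave behind residues involving $\p_0 g_\e$, which converges only weakly in $L^2_\tp{loc}$.

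First, use the divergence form of $\Box_g$ to write
\[
H_\e = \p_\a\!\left(h_\e^{\a\beta}\p_\beta u_\e\right) + c_\e^\beta\,\p_\beta u_\e,
\qquad c_\e^\beta \equiv g_\e^{\a\beta}\tfrac{\p_\a\sqrt{|g_\e|}}{\sqrt{|g_\e|}}-g^{\a\beta}\tfrac{\p_\a\sqrt{|g|}}{\sqrt{|g|}},
\]
which comes from $\Box_{g_\e}-\Box_g = h_\e^{\a\beta}\p^2_{\a\beta}+\p_\a h_\e^{\a\beta}\p_\beta + c_\e^\beta\p_\beta$. A single integration by parts then yields
\[
\langle H_\e, Ae_0 w_\e\rangle = -\int h_\e^{\a\beta}\p_\beta u_\e\,\p_\a(Ae_0 w_\e)\,\d x + \int c_\e^\beta\,\p_\beta u_\e\,(Ae_0 w_\e)\,\d x.
\]
Replacing $u_\e$ by $w_\e = u_\e - u$ in both integrals costs only $o(1)$, since $h_\e\to 0$ uniformly, $\p u$ is smooth, and $Ae_0 w_\e\wto 0$ in $L^2$.

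Next, dispose of the ``easy'' portion of $c_\e^\beta$: the spatial part (indices $\a=i$) and the sub-part of the temporal part involving $\p_0 g_{\e,ij}$ converge strongly to $0$ in $L^2_\tp{loc}$ by \eqref{eq:convergence-spatial-derivatives} and Hypotheses~\ref{hyp:main}(\ref{it:hypmetrics}); H\"older combined with the $L^4$-bounds on $\p u_\e$ and $Ae_0 w_\e$ makes their contribution $o(1)$. What remains of $c_\e^\beta$ involves only $\p_0 g_{\e,00}, \p_0 g_{\e,0i}$, i.e.\ $\p_0 h_\e^{00}$-type combinations that are merely bounded in $L^\infty$. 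By the decomposition \eqref{eq:Box-alternative-identity} of $\Box_g$ in the $(e_0,\p_i)$-frame, the only problematic residual of this sort originates from the single term $L_1=\tfrac12\p_0 g^{00}\,e_0$, so the surviving piece of $c_\e^\beta$ is of the form $\p_0 h_\e^{00}\,e_0 u_\e$ (up to strongly convergent factors). This residue is to be combined with an analogous term arising from the principal integral below.

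For the principal integral, mirror the three-step IBP chain of Lemma~\ref{lemma:commutator-reduction-easy}, now with $e_0$ in place of $\p_\gamma$. Repeatedly use $\p_\a(Ae_0 w_\e) = Ae_0\p_\a w_\e + A[\p_\a,e_0]w_\e + [\p_\a,A]e_0 w_\e$, where $[\p_\a,e_0] = -(\p_\a\beta^i)\p_i$ has smooth coefficients and $[\p_\a,A]\in \Psi^0_{1,c}$: all corrections coming from these commutators produce $o(1)$ remainders when paired with $h_\e^{\a\beta}\to 0$ in $L^\infty$. The only residues that do \emph{not} vanish by uniform smallness of $h_\e$ are those in which the first derivative has landed on $h_\e^{\a\beta}$ in the $\a=0$ direction, i.e.\ terms involving $\p_0 h_\e^{0\beta}\,\p_\beta w_\e$. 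These are precisely the residues which combine with the surviving $L_1$-piece of the $c_\e^\beta$-integral, and together assemble into the commutator structure on the right-hand side. Finally, use $A = A^*$ modulo $\Psi^{-1}$ (compact on $L^2$, by the reality of $\sigma(A)$ in Lemma~\ref{lemma:hypsymbol}) to symmetrise and reverse one IBP in the symmetric copy, arriving at
\[
2\,\langle H_\e, Ae_0 w_\e\rangle = \int \p_\a w_\e\,[A, h_\e^{\a\beta}]\,\p_\beta e_0 w_\e\,\d x + o(1),
\]
as claimed. The main technical obstacle is precisely this last reassembly: tracking the $\p_0 h_\e$-type residues from both the $c_\e^\beta$-integral and the IBP chain, and verifying that they combine into the commutator rather than leaving an uncontrolled limit; this is where the fine structure of \eqref{eq:Box-alternative-identity}, which singles out $L_1$ as the unique problematic first-order term of $\Box_g$ in the $(e_0,\p_i)$-frame, plays its decisive role.
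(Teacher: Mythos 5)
Your overall architecture (divergence form of $H_\e$, one integration by parts, self-adjointness of $A$ to symmetrise and extract the commutator) matches the skeleton of the paper's argument, and you correctly identify that the dangerous residues are the time derivatives of the lapse/shift components. But the decisive step --- ``these are precisely the residues which combine with the surviving $L_1$-piece \ldots and together assemble into the commutator structure'' --- is asserted, not proved, and the mechanism you propose cannot work as described. After the $e_0$-integration by parts in your chain you generate terms of the form $\int e_0 h_\e^{\a\beta}\,\p_\beta w_\e\, A\p_\a w_\e\,\d x$ for \emph{all} $\a,\beta$ (and analogous $\p_0 h_\e^{\a 0}$-terms from the $\p_\beta$-integration by parts); these are trilinear products of quantities converging only weakly, with several distinct index structures, and a single residue of the form $\tfrac12\int \p_0 h_\e^{00}\,e_0 w_\e\, Ae_0 w_\e\,\d x$ cannot cancel them all. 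The paper's proof avoids this by (i) working from the start with the frame quantities $\tilde h^{00}_\e,\tilde h^{0i}_\e,\tilde h^{ij}_\e$ of \eqref{eq:defhtilda}, for which \emph{all} first derivatives of $\tilde h^{ij}_\e$ converge strongly; (ii) absorbing $e_0\tilde h^{00}_\e$ \emph{exactly} into the $\tfrac12 e_0\tilde h^{00}_\e e_0$ term coming from \eqref{eq:Box-alternative-identity} (this is your $L_1$, and it accounts only for the $\tilde h^{00}$-residue); and (iii) --- crucially, and absent from your proposal --- handling the $e_0\tilde h^{0i}_\e$-residues via \eqref{eq:exchange-e0-i}: the $L^2$-boundedness of $\Box_g u_\e$ is used to trade $e_0^2 w_\e$ for $-\tilde g^{jk}(g^{00})^{-1}\p^2_{jk}w_\e$, one then integrates by parts in \emph{spatial} directions (where $\p_k h_\e^{\a\beta}\to 0$ strongly by \eqref{eq:convergence-spatial-derivatives}), and converts back. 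Without this device, Step 4 of the paper's proof has no analogue in your argument and the corresponding residue is left uncontrolled.

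A second, related gap: your claim that replacing $u_\e$ by $w_\e$ ``costs only $o(1)$'' fails for the first-order part of $H_\e$. The leftover $(\Box_{g_\e}-\Box_g)u$ contains $\p_0 h_\e^{0\beta}\,\p_\beta u$, which converges to $0$ only weakly in $L^2$; paired with $Ae_0 w_\e$, which also converges only weakly, the product need not vanish in the limit. This is precisely what Step 5 of the paper's proof addresses, again by invoking \eqref{eq:exchange-e0-i} followed by a spatial integration by parts.
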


\begin{proof}Let us denote
\begin{gather}
\tilde h^{00}_\e\equiv g_\e^{00}-g^{00}\,, \qquad \tilde h^{0i}_\e\equiv - g_\e^{00}(\beta^i_\e-\beta^i)\,,\qquad 
\tilde h^{ij}_\e\equiv g_\e^{00}(\beta_\e^i -\beta^i)(\beta^j_\e-\beta^j) + \tilde{g}_\e^{ij}-\tilde{g}^{ij}\,.
\label{eq:defhtilda}
\end{gather}
From \eqref{eq:Box-alternative-identity},  we compute
\begin{align*}
H_\e&=\lp(\Box_{g_\e}-\Box_g\rp) u_\e
= \tilde h_\e^{ij}\p_{ij}^2u_\e +\tilde h^{00}_\e e_0^2u_\e+ \frac12 e_0 \tilde h^{00}_\e e_0u_\e + \tilde h_\e^{0i}e_0\p_iu_\e + e_0(\tilde h_\e^{0i}\p_iu_\e) + o_{L^2}(1)\\
&= \tilde h_\e^{ij}\p_{ij}^2(u_\e-u) +\lp[\tilde h^{00}_\e e_0+ \frac12 e_0 \tilde h^{00}_\e \rp]e_0(u_\e-u) + \tilde h_\e^{0i}e_0\p_i(u_\e-u) + e_0(\tilde h_\e^{0i}\p_i(u_\e-u)) \numberthis \label{eq:Box-difference-alternative-identity-1}\\
&\qquad+e_0 \tilde h_\e^{00}e_0 u +e_0 \tilde h_\e^{0i}\p_i u + o_{L^2}(1)\,, \numberthis \label{eq:Box-difference-alternative-identity-2}
\end{align*}
where $o_{L^2}(1)$ denotes a remainder which is strongly converging in $L^2$. The proof now proceeds in several steps. Step 5 deals with \eqref{eq:Box-difference-alternative-identity-2}. In steps 1 through 4, we deal with \eqref{eq:Box-difference-alternative-identity-1} and we set $w_\e\equiv u_\e-u$ to simplify the notation. We will also find it convenient to note the following identities for $\e\to 0$: letting $\mathfrak{h}_\e$ be a suitably regular function with $\mathfrak{h}_\e \to 0$ in $L^\infty$ and $\mathfrak{h}_\e\p^2_{\alpha\beta}u_\e$ uniformly bounded in $L^2$, and using Lemma \ref{lemma:continuity},
\begin{align}
\begin{split}
\label{eq:commutator-beta-with-h-right}
\langle \p_\gamma u_\e, [A, \mathfrak{h}_\e](\beta^k \p_k e_0u_\e)\rangle &= \langle \p_\gamma u_\e, [A, \beta^k  \mathfrak{h}_\e]\p_k e_0u_\e\rangle -\langle \p_\gamma u_\e \mathfrak{h}_\e,[\beta^k, A\p_k]e_0u_\e+A(\p_k\beta^k e_0u_\e)\rangle\\
&=\langle \p_\gamma u_\e, [A, \beta^k  \mathfrak{h}_\e]\p_k e_0u_\e\rangle +o(1)\,, 
\end{split}\\
\begin{split}
\langle \beta^k\p_k u_\e, [A, \mathfrak{h}_\e]\p_\gamma e_0u_e \rangle &= \langle \p_k u_\e, [A, \beta^k  \mathfrak{h}_\e]\p_\gamma e_0u_\e\rangle +\langle \p_k u_\e ,[\beta^k, A](\mathfrak{h}_\e\p_\gamma e_0u_\e)\rangle\\
&= \langle \p_k u_\e, [A, \beta^k  \mathfrak{h}_\e]\p_\gamma e_0u_\e\rangle +o(1)\,.
\label{eq:commutator-beta-with-h-left}
\end{split}
\end{align}

\medskip
\noindent\textbf{Step 1:} first term in \eqref{eq:Box-difference-alternative-identity-1}. By its special structure, both time and spatial derivatives of $\tilde{h}^{ij}_\e$ converge strongly. Hence, by a straightforward adaptation of the proof of Lemma~\ref{lemma:commutator-reduction-easy}, we find that 
\begin{align*}
\lim_{\e\to 0}\left\langle \tilde{h}_\e^{ij}\p_{ij}^2 w_\e, A e_0 w_\e\right\rangle = 
\lim_{\e \to 0}\frac 1 2\int e_0 w_\e [A,\tilde{h}_\e^{ij}]\p_{ij}^2 w_\e \d x\,. 
\end{align*}
Integrating by parts in $e_0$ and in $\p_j$ and using the compactness of  derivatives of $\tilde h^{ij}_\e$, we get
\begin{align*}
\lim_{\e \to 0}\frac 1 2\int e_0 w_\e [A,\tilde h_\e^{ij}]\p_{ij}^2 w_\e \d x = \lim_{\e \to 0}\frac 1 2\int \p_i w_\e [A, \tilde h_\e^{ij}]\p_{j}e_0 w_\e \d x\,.
\end{align*}

\medskip
\noindent \textbf{Step 2:} second term in \eqref{eq:Box-difference-alternative-identity-1}. An integration by parts in $e_0$ (which requires both an integration by parts in $\p_t$ and in a spatial direction) leads to
\begin{align*}
\langle \tilde h_\e^{00}e_0^2 w_\e, A e_0 w_\e \rangle
&= 
\langle \p_k\beta^k \tilde  h_\e^{00}e_0 w_\e, A e_0w_\e\rangle 
-\langle \tilde h_\e^{00}e_0 w_\e, [e_0,A]e_0w_\e\rangle\\
&\qquad -\langle e_0 \tilde h_\e^{00}e_0 w_\e, A e_0 w_\e \rangle
 -\langle \tilde h_\e^{00}e_0 w_\e, A e_0^2 w_\e \rangle\,.
\end{align*}
Thus, we have the identity:
\begin{align*}
\langle \tilde h_\e^{00}e_0^2 w_\e, A e_0 w_\e \rangle+\langle e_0 w_\e, A (\tilde h_\e^{00} e_0^2 w_\e) \rangle+\langle e_0 \tilde h_\e^{00}e_0 w_\e, A e_0 w_\e \rangle=\langle e_0 w_\e, [A,\tilde h_\e^{00}] e_0^2 w_\e\rangle  + o(1)\,.
\end{align*}
Using the self-adjointness of $A$ on the second term on the left hand side, we conclude that
\begin{align*}
\lim_{\e\to 0}\langle 2\tilde h_\e^{00}e_0^2 w_\e+ e_0h_\e^{00}e_0 w_\e, Ae_0 w_\e\rangle &= \lim_{\e\to 0} \int  e_0 w_\e [A,\tilde h_\e^{00}] e_0^2 w_\e  \d x\\
&=\lim_{\e\to 0} \int  \lp(\p_0 w_\e [A,\tilde h_\e^{00}] \p_0e_0 w_\e + \p_i w_\e [A,\tilde h_\e^{00}\beta^i\beta^j] \p_j e_0 w_\e\rp)  \d x\\
&\qquad-\lim_{\e\to 0} \int  \lp(\p_i w_\e [A,\beta^i \tilde h_\e^{00}] \p_0e_0 w_\e + \p_0 w_\e [A,\tilde h_\e^{00}\beta^i] \p_i e_0 w_\e\rp)  \d x\,,
\end{align*}
where we have applied both \eqref{eq:commutator-beta-with-h-right} and \eqref{eq:commutator-beta-with-h-left} to conclude.

\medskip
\noindent \textbf{Step 3:} third term in \eqref{eq:Box-difference-alternative-identity-1}.  An integration by parts in $\p_i$ leads to
\begin{align*}
\langle \tilde h_\e^{0i} e_0\p_i w_\e, Ae_0 w_\e\rangle &= 
\langle \tilde h_\e^{0i} [e_0,\p_i] w_\e, Ae_0 w_\e\rangle
-\langle \p_i \tilde h_\e^{0i} e_0 w_\e, A e_0 w_\e\rangle 
-\langle h_\e^{0i} e_0 w_\e, [\p_i,A] e_0 w_\e\rangle \\
&\qquad -\langle \tilde h_\e^{0i} e_0 w_\e, A[\p_i,e_0] w_\e\rangle 
+ \langle e_0 w_\e, [A, \tilde h_\e^{0i}] e_0\p_i w_\e\rangle 
- \langle e_0 w_\e, A(\tilde h_\e^{0i} e_0\p_i w_\e)\rangle\,.
\end{align*}
Thus, we have the identity
\begin{align*}
\langle \tilde h_\e^{0i} e_0\p_i w_\e, Ae_0 w_\e \rangle +\langle e_0 w_\e, A(\tilde h_\e^{0i} e_0\p_i w_\e)\rangle = \langle e_0 w_\e [A,\tilde h_\e^{0i}] e_0\p_i w_\e\rangle +o(1)\,.
\end{align*}
Using the self-adjointness of $A$, modulo a compact operator, \eqref{eq:commutator-beta-with-h-left}, and interchanging $e_0$ with $\p_i$, we conclude
\begin{align*}
\lim_{\e\to 0}\left\langle \tilde h_\e^{0i}e_0\p_i w_\e, A e_0w_\e\right\rangle &= \lim_{\e\to 0}\frac12 \int  e_0 w_\e [A,\tilde h_\e^{0i}] \p_i e_0 w_\e  \d x\\
&=\lim_{\e\to 0}\frac12 \int  \p_0 w_\e [A,\tilde h_\e^{0i}] \p_i e_0 w_\e  \d x
-\lim_{\e\to 0}\frac12 \int  \p_i w_\e [A,\beta^i\tilde h_\e^{0j}] \p_j e_0 w_\e  \d x\,.
\end{align*}

\medskip
\noindent \textbf{Step 4:} fourth term in \eqref{eq:Box-difference-alternative-identity-1}.  To begin, recall that, for any $\mathfrak{h}_\e \to 0$ in $L^\infty$, by \eqref{eq:Box-alternative-identity},
\begin{align}
\mathfrak{h}_\e e_0^2 w_\e + \mathfrak{h}_\e\frac{\tilde{g}^{jk}}{g^{00}}\p_{jk}^2w_\e=  \mathfrak{h}_\e\frac{\Box_g w_\e}{g^{00}} +o_{L^2}(1) = o_{L^2}(1)\,, \label{eq:exchange-e0-i}
\end{align}
since $\Box_g w_\e$ is bounded in $L^2$ by \eqref{eq:bddbox}. Consider the term $e_0(\tilde h_\e^{0i}\p_i w_\e)$; an integration by parts in $e_0$ yields
\begin{equation}
\begin{split}
\langle e_0(\tilde h_\e^{0i}\p_i w_\e),A e_0 w_\e\rangle  &=
-\langle \p_k\beta^k \tilde h_\e^{0i}\p_i w_\e, Ae_0w_\e\rangle 
-\langle \tilde h_\e^{0i}\p_i w_\e ,[e_0,A]e_0 w_\e\rangle  
-\langle \tilde h_\e^{0i} \p_i w_\e, A e_0^2 w_\e\rangle \\
&= - \langle \tilde h_\e^{0i} \p_i w_\e, A e_0^2 w_\e\rangle+o(1)\,.
 \label{eq:commutator-intermediate}
\end{split}
\end{equation}
In the remaining term, we apply \eqref{eq:exchange-e0-i} to replace $e_0^2w_\e$ with $\p_{jk}^2w_\e$ and we commute $A$ with $\tilde g^{jk}/g^{00}$:
\begin{align*}
-\langle \tilde h_\e^{0i} \p_i w_\e, A  e_0^2 w_\e\rangle 
= \langle  \tilde h_\e^{0i} \frac{\tilde{g}^{jk}}{g^{00}} \p_i w_\e,  A \p_{jk}^2w_\e \rangle+o(1) \,.
\end{align*}
Integrating by parts in $j$, then $i$ and then $k$, as in Step 1, we obtain
\begin{align*}
\langle \tilde h_\e^{0i} \frac{\tilde{g}^{jk}}{g^{00}} \p_iw_\e, A \p_{jk}^2w_\e \rangle &= - \langle  \tilde h_\e^{0i}  \frac{\tilde{g}^{jk}}{g^{00}}\p_{jk}^2 w_\e, A \p_{i}w_\e \rangle +o(1)= \langle  \tilde h_\e^{0i} e_0^2 w_\e,A \p_i w_\e \rangle +o(1)\,,
\end{align*}
where the last step follows from another application of \eqref{eq:exchange-e0-i}. Combining the previous results, we finally arrive at the identity:
\begin{align*}
-\langle \tilde h_\e^{0i} \p_i w_\e, A e_0^2 w_\e\rangle-\langle e_0^2 w_\e, A( \tilde h_\e^{0i} \p_i w_\e)\rangle =\langle \tilde h^{0i}_\e e_0^2 w_\e, A\p_i w_\e \rangle-\langle e_0^2 w_\e, A( \tilde h_\e^{0i} \p_i w_\e)\rangle +o(1)\,.
\end{align*}
Now we use the self-adjointness of $A$, modulo a compact operator, on all of the terms of the last expression, excluding the first term:
\begin{align*}
-2\langle \tilde h_\e^{0i} \p_i w_\e, A e_0^2 w_\e\rangle =\langle  \p_i w_\e, A (\tilde h^{0i}_\e e_0^2 w_\e) \rangle-\langle \tilde h_\e^{0i} \p_i w_\e, A e_0^2 w_\e \rangle +o(1) = \langle\p_i w_\e, [A,\tilde h_\e^{0i}]e_0^2w_\e\rangle +o(1)\,.
\end{align*}
Recalling \eqref{eq:commutator-intermediate}, we arrive at
\begin{align*}
\lim_{\e\to 0}\left\langle e_0(\tilde h_\e^{0i}\p_i w_\e),A e_0 w_\e \right\rangle &= \lim_{\e\to 0} \frac12 \int  \p_i w_\e [A,\tilde h_\e^{0i}]e_0^2 w_\e  \d x\\
&=\lim_{\e\to 0} \frac12 \int  \p_i w_\e [A,\tilde h_\e^{0i}]\p_0e_0 w_\e  \d x-\lim_{\e\to 0} \frac12 \int  \p_i w_\e [A,\tilde h_\e^{0i}\beta^j]\p_j e_0 w_\e  \d x
\,,
\end{align*}
where we use \eqref{eq:commutator-beta-with-h-right} in the last equality.

\medskip
\noindent \textbf{Step 5:} the two terms in \eqref{eq:Box-difference-alternative-identity-2}. We integrate by parts in $e_0$:
\begin{align*}
&\langle e_0 \tilde h_\e^{00}e_0 u +e_0 \tilde h_\e^{0k}\p_k u, A e_0 (u_\e-u) \rangle\\
&\quad=- \langle \p_j\beta^j (\tilde h_\e^{00}e_0 u+h_\e^{0k}\p_k u), A e_0(u_\e-u)\rangle 
-\langle \tilde h_\e^{00}e_0 u+\tilde h_\e^{0k}\p_k u, [e_0,A]e_0(u_\e-u)\rangle\\
&\quad \qquad -\langle \tilde h_\e^{00}e_0^2 u +\tilde h_\e^{0k}e_0\p_k u, A e_0 (u_\e-u) \rangle
 -\langle \tilde h_\e^{00}e_0 u + \tilde h_\e^{0k}\p_k u, A e_0^2 (u_\e-u) \rangle\\
&\quad =-\langle \tilde h_\e^{00}e_0 u +\tilde h_\e^{0k}\p_k u, A e_0^2 (u_\e-u) \rangle +o(1)\,,
\end{align*}
where the last line follows by the uniform convergence of $\tilde h_\e^{\alpha\beta}$. For the remaining term, we may apply the same reasoning as in the previous step: from \eqref{eq:exchange-e0-i}, we have
\begin{align*}
-\langle \tilde h_\e^{00}e_0 u +\tilde h_\e^{0k}\p_k u, A e_0^2 (u_\e-u) \rangle &=\langle \tilde h_\e^{00}e_0 u +\tilde h_\e^{0k}\p_k u, A \frac{\tilde{g}^{ij}}{g^{00}}\p_{ij}^2(u_\e-u)\rangle +o(1)\\
&=-\langle \p_i \tilde h_\e^{00}e_0 u +\p_i \tilde h_\e^{0k}\p_k u, A \frac{\tilde{g}^{ij}}{g^{00}}\p_{j}(u_\e-u)\rangle +o(1) =o(1)\,,
\end{align*} 
with the second line following from an integration by parts. Thus, \eqref{eq:Box-difference-alternative-identity-2} does not contribute to the limit.
\medskip

\noindent \textbf{Step 6:} conclusion. Combining the previous steps yields 
\begin{align*}
\langle H_\e, A e_0 w_\e\rangle &= \int \p_0 w_\e[A,\tilde h_\e^{00}]\p_0 e_0 w_\e\d x 
+ \int \lp(\p_0 w_\e[A,\tilde h_\e^{0i}-\tilde h_\e^{00}\beta^i]\p_i+\p_i w_\e[A,\tilde h_\e^{i0}-\tilde h_\e^{00}\beta^i]\p_0 \rp)e_0 w_\e\d x\\
&\qquad+ \int \p_i w_\e[A,\tilde h_\e^{ij} -\tilde h_\e^{0i}\beta^j -\tilde h_\e^{0j}\beta^i+\tilde h_\e^{00}\beta^i\beta^j]\p_j e_0 w_\e\d x + o(1)\,,
\end{align*}
and using the definitions in \eqref{eq:defhtilda} the conclusion follows.
\end{proof}

By passing to subsequences if need be, by Hypothesis~\ref{hyp:main}\eqref{it:hypsols} we may find a sequence $\omega_\e\searrow 0$ such that 
\begin{align*}
\sup_{\alpha,\beta}\lVert \p_{\alpha\beta}^2 u_\e \rVert_{L^4(\Omega)} \lesssim \omega_\e^{-1}\,,\qquad  \sup_{\alpha,\beta}\lVert h_\e^{\alpha\beta}  \rVert_{L^\infty(\Omega)} \lesssim \omega_\e\,.
\end{align*}
In order to prove Proposition~\ref{prop:case-ii-no-metric-osc-contribution}, we move to Fourier space.  Let $\zeta\colon \mathbb{R}^+_0\to[0,1]$  be a smooth function such that $\zeta(x)=1$ for $x\leq 1$ and $\zeta=0$ for $x\geq 2$. 

We consider an $\e$-dependent partition of frequency space into low frequencies, spatially-dominated high frequencies and time-dominated high frequencies, c.f.\ Figure \ref{fig:frequency-space}. This partition is associated to the smooth functions $0 \leq \Theta_{\tp{low},\,\e}, \Theta_{\tp{spa},\,\e},\Theta_{\tp{time},\,\e}\leq 1$ defined by
\begin{align*}
\Theta_{\tp{low},\,\e}(\rho)\equiv\zeta\lp(\omega_\e^{\delta_1}|\rho_{\rm tot}|\rp)&\implies \supp \Theta_{\tp{low},\,\e} 
\subseteq \lp\{ |\rho_{\rm tot}| \leq 2\omega_\e^{-\delta_1}\rp\}, \\
\Theta_{\tp{spa},\,\e}(\rho)\equiv (1-\Theta_{\tp{low},\,\e})(\rho)\lp[1-\zeta\lp(\frac{|\rho_{\rm spa}|}{|\rho_{\rm tot}|^{\delta_2}}\rp)\rp] &\implies \supp \Theta_{\tp{spa},\,\e} \subseteq \lp\{|\rho_{\rm spa}|\geq |\rho_{\rm tot}|^{\delta_2} \geq \omega_\e^{-\delta_1\delta_2}\rp\}
,\\
\Theta_{\tp{time},\,\e}(\rho)\equiv(1-\Theta_{\tp{low},\,\e})(\rho)\zeta\lp(\frac{|\rho_{\rm spa}|}{|\rho_{\rm tot}|^{\delta_2}}\rp)&\implies \supp \Theta_{\tp{time},\,\e} \subseteq\lp\{\begin{array}{ll}  \rho_0^2 \geq |\rho_{\rm tot}|^2-4|\rho_{\rm tot}|^{2\delta_2} \\ \tp{ and } |\rho_{\rm tot}| \geq \omega_\e^{-\delta_1} \end{array} \rp\}
,
\end{align*}
where $|\rho_{\rm tot}|^2=\sum_\alpha\rho_\alpha^2$ and $|\rho_{\rm spa}|^2=\sum_i\rho_i^2$. 
Here, $\delta_1,\delta_2>0$ are parameters to be fixed. Clearly
$$\Theta_{\rm{low},\,\e}+\Theta_{\rm{spa},\,\e}+\Theta_{\rm{time},\,\e}=1\,.$$
For an $L^2$ function $\mathfrak{h}_\e$, we define its projections on a range of frequencies according to
\begin{align*}
\mc{P}_{\rm range,\, \e}[\mathfrak{h}_\e] &\equiv \mc{F}^{-1}\lp(\Theta_{\rm range,\, \e}\mc{F}(\mathfrak{h}_\e)\rp)\,,\qquad \mathrm{range} \in \{\rm low, spa, time\}\,,
\end{align*}
where $\mc{F}$ denotes the Fourier transform. These projections are linear and commute with derivatives.

We focus on the low frequencies first. Note that, if the frequency parameter is capped, then derivatives, which in frequency space correspond to multiplication by the frequency variable, are not complete. Thus, the strategy of Section \ref{sec:easycase} still works under the current convergence assumptions on $g_\e$ as long as one restricts to low frequencies:

\begin{lemma} \label{lemma:case-ii-no-metric-osc-contribution-lowfreq}  Under Hypotheses~\ref{hyp:main}, as long as $\delta_1<1$,
\begin{align*}
\lim_{\e\to 0} \int \p_\alpha(u_\e-u) [A, \mc{P}_{\rm low,\, \e}[h_\e^{\alpha\beta}]]\p_\beta e_0 (u_\e-u)  \d x=0\,.
\end{align*}
\end{lemma}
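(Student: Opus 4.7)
The strategy mirrors that of Proposition~\ref{prop:case-i-no-metric-osc-contribution}: in the strong-convergence setting $g_\e\to g$ in $W^{1,\infty}_\tp{loc}$, a single application of the Calder\'on commutator disposes of the integral. Although this strong convergence fails in general, it \emph{does} hold for the low-frequency truncation $f_\e^{\alpha\beta}\equiv \mc{P}_{\rm low,\,\e}[h_\e^{\alpha\beta}]$: the smoothing effect of the cutoff forces $\|\nabla f_\e^{\alpha\beta}\|_{L^\infty}\to 0$ at a quantitative rate, and then the argument of Proposition~\ref{prop:case-i-no-metric-osc-contribution} applies verbatim (for each fixed pair $(\alpha,\beta)$).

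The first step is to quantify this smallness. I would observe that $\mc{P}_{\rm low,\,\e}$ is convolution with the inverse Fourier transform of $\Theta_{\rm low,\,\e}$, and by the scaling $\Theta_{\rm low,\,\e}(\rho)=\zeta(\omega_\e^{\delta_1}|\rho_{\rm tot}|)$ this kernel is an $L^1$-normalized dilation of a fixed Schwartz function; in particular $\mc{P}_{\rm low,\,\e}$ is uniformly bounded on $L^\infty$, so $\|f_\e^{\alpha\beta}\|_{L^\infty}\lesssim \|h_\e^{\alpha\beta}\|_{L^\infty}\lesssim \omega_\e$. Since $\widehat{f_\e^{\alpha\beta}}$ is supported in the ball of radius $\sim \omega_\e^{-\delta_1}$, Bernstein's inequality (obtained by placing the derivative on the convolution kernel) gives
\begin{equation*}
\|\nabla f_\e^{\alpha\beta}\|_{L^\infty}\lesssim \omega_\e^{-\delta_1}\|f_\e^{\alpha\beta}\|_{L^\infty}\lesssim \omega_\e^{1-\delta_1}\to 0,
\end{equation*}
where the convergence uses the assumption $\delta_1<1$.

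To conclude, I would combine this with the operator identity
\begin{equation*}
[A,f_\e^{\alpha\beta}]\,\p_\beta \;=\; [A\p_\beta,f_\e^{\alpha\beta}] \;-\; A\circ M_{\p_\beta f_\e^{\alpha\beta}},
\end{equation*}
where $M_\varphi$ denotes multiplication by $\varphi$. Since $A\p_\beta\in \Psi^1_{1,c}$, Theorem~\ref{thm:Calderon} bounds the first term in $L^2\to L^2$ operator norm by $\|\nabla f_\e^{\alpha\beta}\|_{L^\infty}$, while the second is controlled by the same quantity times $\|A\|_{L^2\to L^2}$. Thus $\|[A,f_\e^{\alpha\beta}]\p_\beta\|_{L^2\to L^2}\to 0$. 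Applying this to $e_0(u_\e-u)$, which is uniformly bounded in $L^2_\tp{loc}$ by Hypotheses~\ref{hyp:main}\eqref{it:hypsols}, and pairing with $\p_\alpha(u_\e-u)\in L^2_\tp{loc}$ via Cauchy--Schwarz closes the argument.

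I do not anticipate any genuine obstacle in this regime: the whole content of the lemma is the observation that a low-frequency cutoff forces Lipschitz smallness at the quantitative rate $\omega_\e^{1-\delta_1}$, and once this is noted the estimate is just Calder\'on's theorem. The real difficulties of Proposition~\ref{prop:case-ii-no-metric-osc-contribution} are confined to the spatial and temporal high-frequency regimes $\mc F_{\rm spa}$ and $\mc F_{\rm time}$.
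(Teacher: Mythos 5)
Your proposal is correct and follows essentially the same route as the paper's proof: the same splitting $[A,\mc{P}_{\rm low,\,\e}[h_\e^{\alpha\beta}]]\p_\beta=[A\p_\beta,\mc{P}_{\rm low,\,\e}[h_\e^{\alpha\beta}]]-A\circ M_{\p_\beta \mc{P}_{\rm low,\,\e}[h_\e^{\alpha\beta}]}$, the Calder\'on commutator estimate for the first piece, and Bernstein's inequality to get the rate $\omega_\e^{1-\delta_1}$ from $\delta_1<1$. The only (harmless) difference is that you make explicit the uniform $L^\infty$-boundedness of $\mc{P}_{\rm low,\,\e}$ via the dilated Schwartz kernel, which the paper uses implicitly.
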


\begin{proof} Without loss of generality, set $u\equiv 0$. Consider the identity
\begin{align*}
\p_\alpha u_\e[A,\mc{P}_{\rm low,\, \e}[h_\e^{\alpha\beta}] ]\p_{\beta}e_{0} u_\e = \p_\alpha u_\e[A \p_\beta,\mc{P}_{\rm low,\, \e}[h_\e^{\alpha\beta}] ]e_{0} u_\e -\p_\alpha u_\e A( \p_\beta\mc{P}_{\rm low,\, \e}[ h_\e^{\alpha\beta}] e_0 u_\e)\,.
\end{align*}
We estimate the second term directly and, for the first term, apply the Theorem \ref{thm:Calderon}: for small $\e$,
\begin{align*}
\left|\int \p_\alpha u_\e[A,\mc{P}_{\rm low,\, \e}[h_\e^{\alpha\beta}] ]\p_\beta e_{0} u_\e \d x \right|
&\lesssim  \sup_{\alpha,\beta}\lVert \mc{P}_{\rm low,\, \e}[h_\e^{\alpha\beta}] \rVert_{W^{1,\infty}}\lVert \p u_\e \rVert_{L^{2}}^2
\lesssim \omega_\e^{-\delta_1}\sup_{\alpha,\beta}\lVert h_\e^{\alpha\beta}\rVert_{L^\infty}
\lesssim \omega_\e^{1-\delta_1}\,,
\end{align*}
where we use Bernstein's inequality $\hat f \subset B_R(0) \implies 
\Vert \D f\Vert_\infty \lesssim R \Vert f \Vert_{\infty}$ in the second inequality.
\end{proof}

For high frequencies this method fails, as we do not have sufficient control over $h_\e^{00}$ and $h_\e^{0i}$. If the spatial frequencies dominate, however, we can compensate for this issue by appealing to control on higher order spatial derivatives of $h_\e^{\alpha\beta}$. This is independent of the commutator structure.

\begin{lemma} \label{lemma:case-ii-no-metric-osc-contribution-spafreq} Under Hypotheses~\ref{hyp:main}, as long as $\delta_1\delta_2>\frac 1 2$,
\begin{align*}
\lim_{\e\to 0} \int \p_\alpha(u_\e-u) [A, \mc{P}_{\rm spa,\, \e}[h_\e^{\alpha\beta}]]\p_\beta e_0 (u_\e-u)  \d x=0\,.
\end{align*}
\end{lemma}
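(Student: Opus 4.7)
The plan is to exploit the spatial--frequency localization together with the hypothesis that $\delta^{ij}\p^2_{ij}g_\e^{\a\beta}$ is bounded in $L^2_\tp{loc}$ to derive a strong $L^2$ smallness bound for $\mc{P}_{\rm spa,\e}[h_\e^{\alpha\beta}]$, and then control the commutator by brute Hölder balancing (no Calder\'on commutator is needed; indeed, we have no $L^\infty$ bound on $\mc{P}_{\rm spa,\e}[h_\e^{\alpha\beta}]$, which is precisely why the simple argument of Section~\ref{sec:easycase} fails here). Writing $w_\e\equiv u_\e-u$ and recalling that we may assume $h_\e^{\alpha\beta}$ is compactly supported, Plancherel gives
\begin{align*}
\lVert \mc{P}_{\rm spa,\,\e}[h_\e^{\alpha\beta}]\rVert_{L^2}^2
&= \int \Theta_{\rm spa,\,\e}(\rho)^2\,|\widehat{h_\e^{\alpha\beta}}(\rho)|^2 \d\rho
= \int \frac{\Theta_{\rm spa,\,\e}(\rho)^2}{(2\pi)^4|\rho_{\rm spa}|^4}\,|\widehat{\Delta_x h_\e^{\alpha\beta}}(\rho)|^2 \d\rho,
\end{align*}
and since $|\rho_{\rm spa}|\geq \omega_\e^{-\delta_1\delta_2}$ on $\supp \Theta_{\rm spa,\,\e}$, combined with $\lVert\Delta_x h_\e^{\alpha\beta}\rVert_{L^2}\lesssim 1$, this yields the key estimate $\lVert\mc{P}_{\rm spa,\,\e}[h_\e^{\alpha\beta}]\rVert_{L^2}\lesssim \omega_\e^{2\delta_1\delta_2}$.

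Next, I would expand the commutator and split the integral as $I_1-I_2$, where
\begin{align*}
I_1&\equiv \int \p_\alpha w_\e \cdot A\bigl(\mc{P}_{\rm spa,\,\e}[h_\e^{\alpha\beta}]\,\p_\beta e_0 w_\e\bigr)\d x,\\
I_2&\equiv \int \p_\alpha w_\e \cdot \mc{P}_{\rm spa,\,\e}[h_\e^{\alpha\beta}]\cdot A\p_\beta e_0 w_\e\d x.
\end{align*}
For $I_2$, Hypothesis~\ref{hyp:main}(\ref{it:hypsols}) gives $\lVert \p_\alpha w_\e\rVert_{L^4}\lesssim 1$, while Hypothesis~\ref{hyp:main}(\ref{it:hyprates}) (and $L^4$-boundedness of $A$) gives $\lVert A\p_\beta e_0 w_\e\rVert_{L^4}\lesssim \omega_\e^{-1}$, so Hölder's inequality combined with the key estimate yields $|I_2|\lesssim \omega_\e^{2\delta_1\delta_2-1}$. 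For $I_1$, the product $\mc{P}_{\rm spa,\,\e}[h_\e^{\alpha\beta}]\,\p_\beta e_0 w_\e$ lies in $L^{4/3}$ with norm $\lesssim \omega_\e^{2\delta_1\delta_2-1}$; since $A\colon L^{4/3}\to L^{4/3}$ is bounded (classical Calder\'on--Zygmund), the same bound controls $A(\cdots)$, and a final Hölder against $\p_\alpha w_\e\in L^4$ gives $|I_1|\lesssim \omega_\e^{2\delta_1\delta_2-1}$.

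Since $\delta_1\delta_2>\tfrac12$, we conclude $I_1-I_2\to 0$ as $\e\to 0$. The main obstacle is conceptual rather than technical: one must realize that although derivatives of $g_\e$ only converge weakly, the a priori $L^2$ control on \emph{spatial} second derivatives (from Hypothesis~\ref{hyp:main}(\ref{it:hypmetrics})) acts as an ``elliptic regularization'' in the region of frequency space where spatial modes dominate, providing the decisive $\omega_\e^{2\delta_1\delta_2}$ gain that just beats the $\omega_\e^{-1}$ loss coming from two derivatives landing on $w_\e$. This is the elliptic--wave compensation mechanism alluded to in the introduction.
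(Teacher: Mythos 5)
Your proof is correct and follows essentially the same route as the paper: the same splitting of the commutator into the two terms $I_1$ and $I_2$ estimated by H\"older, with the decisive input being the Plancherel bound on $\lVert\mc{P}_{\rm spa,\,\e}[h_\e^{\alpha\beta}]\rVert_{L^2}$ obtained from the $L^2$ bound on the spatial Laplacian together with the lower bound $|\rho_{\rm spa}|\gtrsim\omega_\e^{-\delta_1\delta_2}$ on the support of $\Theta_{\rm spa,\,\e}$. Your exponent $\omega_\e^{2\delta_1\delta_2}$ for the unsquared $L^2$ norm is in fact the correct one: the paper's display gives $\omega_\e^{2\delta_1\delta_2}$ for the \emph{squared} norm, which appears to be a typo, since only the factor $\omega_\e^{4\delta_1\delta_2}$ on the squared norm makes the stated condition $\delta_1\delta_2>\tfrac12$ (and the final choice $\delta_1=\tfrac56$, $\delta_2=\tfrac45$) sufficient.
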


\begin{proof}  Without loss of generality, set $u\equiv 0$. We have
\begin{align*}
\left|\int \p_\alpha u_\e[A,\mc{P}_{\rm spa,\, \e}[h_\e^{\alpha\beta}] ]\p_{\beta}e_{0} u_\e \d x\right|
&\leq \lp|\int \p_\alpha u_\e A \lp(\mc{P}_{\rm spa,\, \e}[h_\e^{\alpha\beta}]\p_{\beta}e_{0} u_\e \rp)\d x\rp|+ \lp|\int \mc{P}_{\rm spa,\, \e}[h_\e^{\alpha\beta}] \p_\alpha u_\e A \p_{\beta}e_{0} u_\e\d x \rp|\\
&\lesssim \sup_{\alpha,\beta,\gamma}\lVert \p_{\beta}e_{0} u_\e\rVert_{L^4} \lVert \p u_\e\rVert_{L^{4}}\lVert  \mc{P}_{\rm spa,\, \e}[h_\e^{\alpha\beta}]\rVert_{L^2}\lesssim \sup_{\alpha,\beta}\omega_\e^{-1}  \lVert\mc{P}_{\rm spa,\, \e}[h_\e^{\alpha\beta}]\rVert_{L^2} \,.
\end{align*}
Using the assumptions directly would imply that the term above is bounded, but not necessarily converging to zero. However,  by Plancherel's theorem, 
\begin{align*}
 \lVert\mc{P}_{\rm spa,\, \e}[h_\e^{\alpha\beta}]\rVert_{L^2}^2
 &= \int \lp|\Theta_{\rm spa,\, \e}\widehat{h^{\alpha\beta}_\e}\rp|^2 \tp d\xi 
 =\int \frac{|\Theta_{\rm spa,\, \e}(\xi)|^2}{|\xi_\tp{spa}|^4}\lp|\delta^{ij}\xi_i\xi_j\widehat{h^{\alpha\beta}_\e}(\xi)\rp|^2 \tp d\xi 
 \lesssim \omega_\e^{2\delta_1\delta_2}\lVert  \delta^{ij}\p_{ij}^2h_\e^{\alpha\beta}\rVert_{L^2}^2\,,
\end{align*}
since $|\xi_\tp{spa}|^2\gtrsim |\xi|^{2\delta_2}\gtrsim \omega_\e^{-2\delta_1\delta_2}$ in the support of $\Theta_{\tp{spa},\,\e}(\xi)$. By the boundedness of the spatial laplacian of the metric coefficients, we obtain our result.  
\end{proof}

Finally, we are left with the regime of high frequencies where it is the time frequency which dominates. Here, the lack of control over $\p h_\e^{\alpha\beta}$ is compensated by control over $\Box_g u_\e$, see \eqref{eq:bddbox}. Crucial to the argument is the commutator structure yielded by Lemma~\ref{lemma:commutator-reduction} and the  invertibility of $e_0$ in this frequency regime. 




\begin{lemma} \label{lemma:case-ii-no-metric-osc-contribution-timefreq}
Under Hypotheses~\ref{hyp:main} and assuming that $A$ satisfies \eqref{eq:hypsymbol}, if  $\delta_1>\frac 1 2$  and $\delta_2<1$,
\begin{align*}
\int \p_\alpha(u_\e-u) [A, \mc{P}_{\rm time,\, \e}[h_\e^{\alpha\beta}]]\p_\beta e_0 (u_\e-u)  \d x=0\,.
\end{align*}
\end{lemma}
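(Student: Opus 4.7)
The strategy exploits the ellipticity of $e_0$ on the time-dominated frequency regime. Write $w_\e \equiv u_\e - u$ and $\tilde h^{\alpha\beta}_\e \equiv \mc{P}_{\rm time,\,\e}[h_\e^{\alpha\beta}]$, so the goal is to show that the integral converges to zero. The key observation is that on $\supp \Theta_{\rm time,\,\e}$ one has $|\xi_{\rm spa}|\le 2|\xi_{\rm tot}|^{\delta_2}$ with $\delta_2<1$, so the principal symbol of $e_0$, namely $i(\xi_0 - \beta^k(x)\xi_k)$, satisfies $|\xi_0-\beta^k\xi_k|\ge |\xi_0|-\|\beta\|_\infty|\xi_{\rm spa}|\gtrsim |\xi_{\rm tot}|$. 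Hence $e_0$ is elliptic of order $1$ on the Fourier support of $\tilde h^{\alpha\beta}_\e$, and symbolic calculus produces a parametrix $E_\e\in\Psi^{-1}$ with $e_0\, E_\e=\mathrm{Id}$ modulo an operator whose symbol is rapidly decaying on this regime, so that $\tilde h^{\alpha\beta}_\e = e_0 B_\e^{\alpha\beta}+r_\e^{\alpha\beta}$, with $B_\e^{\alpha\beta}\equiv E_\e\tilde h^{\alpha\beta}_\e$ and $r_\e^{\alpha\beta}$ a smoothing remainder of negligible contribution.

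Substituting this decomposition into the integral and expanding $[A,e_0 B_\e^{\alpha\beta}] = e_0[A,B_\e^{\alpha\beta}] + [A,e_0]B_\e^{\alpha\beta}$, the commutator $[A,e_0]$ is of order $0$ acting on $B_\e^{\alpha\beta}$, which is one degree more regular than $\tilde h^{\alpha\beta}_\e$; this piece therefore vanishes in the limit by H\"older. The main term is
$$J_\e \equiv \int \p_\alpha w_\e \, e_0[A,B_\e^{\alpha\beta}]\,\p_\beta e_0 w_\e\,\d x.$$
I then integrate by parts to shift the outer $e_0$ onto the factor $\p_\beta e_0 w_\e$; using $e_0^*=-e_0+\p_k\beta^k$ and the fact that $[e_0,\p_\beta]=(\p_\beta\beta^k)\p_k$ is first-order with bounded coefficients, this produces
$$J_\e = -\int e_0\p_\alpha w_\e\,[A,B_\e^{\alpha\beta}]\,\p_\beta e_0 w_\e\,\d x + \int \p_\alpha w_\e\,[A,B_\e^{\alpha\beta}]\,\p_\beta e_0^2 w_\e\,\d x + \text{error},$$
where the error terms involve spatial derivatives of $\beta$ and $\tilde h^{\alpha\beta}_\e$ (which converge strongly in $L^2$ by \eqref{eq:convergence-spatial-derivatives}), or are of strictly lower order. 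The parity assumption \eqref{eq:hypsymbol} ensures the self-adjointness of $A$ modulo smoothing operators, which is what makes these integration-by-parts identities produce cancellation rather than new obstructions, in the spirit of Lemma \ref{lemma:commutator-reduction}.

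Now I invoke the identity \eqref{eq:exchange-e0-i}: since $\Box_g w_\e$ is uniformly bounded in $L^4_{\rm loc}$ by \eqref{eq:bddbox} and $B_\e^{\alpha\beta}\to 0$ uniformly, the factors $e_0^2 w_\e$ may be replaced by $-(\tilde g^{jk}/g^{00})\p^2_{jk}w_\e$ up to a residue that is compact in $L^2_{\rm loc}$. This trades the outstanding second time derivative for second spatial derivatives. One last integration by parts in $\p_j$ distributes the derivative onto $B_\e^{\alpha\beta}$, producing terms of the form $\p_j B_\e^{\alpha\beta}\cdot \p^2 w_\e$ paired against $[A,\,\cdot\,]$; since $E_\e$ has order $-1$, one has $\|\p_j B_\e^{\alpha\beta}\|_{L^2}\lesssim \|\p_j\tilde h_\e^{\alpha\beta}\|_{L^2}\to 0$ by \eqref{eq:convergence-spatial-derivatives}, and we match this against $\p^2 w_\e$ bounded in $L^4_{\rm loc}$ (from Hypotheses \ref{hyp:main}\eqref{it:hyprates}) via H\"older. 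All remaining residues pair a weakly converging sequence against a strongly converging one, so $J_\e\to 0$. The parameter conditions $\delta_1>\tfrac12$ and $\delta_2<1$ are precisely what is needed to ensure that the parametrix construction is quantitatively well behaved on $\supp \Theta_{\rm time,\,\e}$ and that the remainders from the frequency cutoffs also vanish, consistent with the $\omega_\e^{1/2-\delta_1\delta_2}$-type rates exploited in Lemmas \ref{lemma:case-ii-no-metric-osc-contribution-lowfreq} and \ref{lemma:case-ii-no-metric-osc-contribution-spafreq}.

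The main obstacle is bookkeeping: constructing the parametrix $E_\e$ with uniform-in-$\e$ error control on the nested cutoffs, and tracking each integration by parts to identify the factor that ultimately carries strong convergence (spatial derivatives of $\tilde h^{\alpha\beta}_\e$, or $\Box_g w_\e$). The parity of $A$ is indispensable: without it, the cancellations that prevent a Calder\'on commutator bound from being needed (which would demand Lipschitz regularity of $h_\e^{\alpha\beta}$ that we do not have) cannot be implemented.
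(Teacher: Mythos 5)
Your architecture is essentially the paper's: the parametrix $E_\e$ you build is (up to the factor $g^{00}$) exactly the operator $Q\in\Psi^{-1}_{1,c}$ constructed in the proof, and the sequence of moves --- write $\mathfrak h_\e^{\alpha\beta}=e_0(Q\mathfrak h_\e^{\alpha\beta})$ modulo smoothing, integrate by parts in $e_0$, trade $e_0^2 w_\e$ for $\tilde g^{jk}\p^2_{jk}w_\e$ via \eqref{eq:exchange-e0-i} and \eqref{eq:bddbox}, then integrate by parts once more in $\p_j$ --- is the same. However, there are two genuine gaps. First, your displayed formula for $J_\e$ retains the term $-\int e_0\p_\alpha w_\e\,[A,B_\e^{\alpha\beta}]\,\p_\beta e_0 w_\e\,\d x$, and your last integration by parts in $\p_j$ likewise produces $\int \p^2_{j\alpha}w_\e\,[A,B_\e^{\alpha\beta}]\,\tilde g^{jk}\p^2_{k\beta}w_\e\,\d x$. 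Neither of these can be closed by H\"older: each carries \emph{two} second-order derivatives of $w_\e$, each of size $\omega_\e^{-1}$ in $L^4$, against $\lVert B_\e^{\alpha\beta}\rVert_{L^2}\lesssim\omega_\e^{1+\delta_1}$, giving $\omega_\e^{\delta_1-1}\to\infty$. These terms must be shown to \emph{vanish identically} in the limit by exploiting the symmetry of $h_\e^{\alpha\beta}$ in $\alpha\leftrightarrow\beta$ together with the self-adjointness of $A$ (the term equals minus itself up to $o(1)$, cf.\ \eqref{eq:symmetry1} and \eqref{eq:symmetry2}); gesturing at ``cancellation in the spirit of Lemma \ref{lemma:commutator-reduction}'' is not enough, since that lemma's cancellations concern a different pairing.

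Second, your final H\"older step misreads Hypotheses~\ref{hyp:main}\eqref{it:hyprates}: that hypothesis gives $\lVert\p^2 w_\e\rVert_{L^4}\lesssim\omega_\e^{-1}$, not boundedness. With the bound $\lVert\p_j B_\e^{\alpha\beta}\rVert_{L^2}\lesssim\lVert\p_j\tilde h_\e^{\alpha\beta}\rVert_{L^2}\lesssim\omega_\e^{1/2}$ that you quote, the product $\lVert\p_j B_\e\rVert_{L^2}\lVert\p^2 w_\e\rVert_{L^4}\lVert\p w_\e\rVert_{L^4}$ is only $O(\omega_\e^{-1/2})$ and the argument does not close. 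What saves the estimate is that the parametrix acts on frequencies $|\xi_0|\gtrsim\omega_\e^{-\delta_1}$ and therefore gains an extra factor $\omega_\e^{\delta_1}$, yielding $\lVert\p_j(Q\mathfrak h_\e^{\alpha\beta})\rVert_{L^2}\lesssim\omega_\e^{1/2+\delta_1}$ and hence a final rate $\omega_\e^{\delta_1-1/2}$; this is exactly where the hypothesis $\delta_1>\tfrac12$ enters, and your write-up never actually uses it in an estimate.
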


\begin{proof} Without loss of generality, set $u\equiv 0$. 
Throughout the proof, we let  $\mathfrak{h}_\e^{\alpha\beta}\equiv \mc{P}_{\rm time}[h_\e^{\alpha\beta}]$ and we assume  that $\delta_1>0$ and $\delta_2<1$.
We also note that, for sufficiently small $\e_0$,
 \begin{align*}
|\beta^j(x)\xi_j/\xi_0|\ll 1\,, \quad \lp|\xi_0+\beta^j(x)\xi_j\rp| \gtrsim \omega_\e^{-\delta_1}\gg 1\qquad \text{ when } \xi\in \supp\Theta_{\rm time,\, \e}\,,
\end{align*}
whenever $\e\leq\e_0$.
Hence we may find an operator $Q\in \Psi^{-1}_{1,c}$ and $R\in \Psi^{0}_{1,c}$ such that
\begin{align*}
\sigma(Q)(x,\xi)&=q(x,\xi)\equiv \lp[g^{00}(x)(\xi_0+\beta^j(x)\xi_j)\rp]^{-1} = \frac{1+ \sigma({R})(x,\xi)}{g^{00}(x)\xi_0} \,,\quad
\sigma({R})(x,\xi)\equiv  \sum_{\ell=1}^\infty \lp(-\beta^j(x)\frac{\xi_j}{\xi_0}\rp)^\ell\,,
\end{align*}
whenever $(x,\xi)\in \Omega\times \supp \Theta_{\tp{time},\,\e_0}$. 

It is now easy to see that we have the estimates
\begin{gather}
\lVert  Q\mathfrak{h}_\e^{\alpha\beta}\rVert_{L^2(\Omega)} \lesssim \omega_\e^{1+\delta_1}\,, \label{eq:time-dominated-Qh-estimate}\\
\lVert  \p_0(Q\mathfrak{h}_\e^{\alpha\beta})\rVert_{L^2(\Omega)} \lesssim \omega_\e^{\delta_1}\,, 
\qquad {\textstyle \sup_j}\lVert  \p_j (Q\mathfrak{h}_\e^{\alpha\beta})\rVert_{L^2 (\Omega)} \lesssim \omega_\e^{\frac12 +\delta_1} \,.
\label{eq:time-dominated-Qh-estimate-derivative}
\end{gather}
Indeed, for \eqref{eq:time-dominated-Qh-estimate}, we  compute
\begin{align*}
\lVert Q \mf h_\e^{\a\beta}\rVert_{L^2(\Omega)} & \lesssim \lVert (g^{00})^{-1} \rVert_{L^\infty(\Omega)} \lVert \xi_0^{-1}\rVert_{L^\infty_\xi(\supp \Theta_{\e,\, \rm time})}\lVert h_\e^{\alpha\beta} \rVert_{L^2} \sum_{\ell=0}^\infty \lp[\lVert \beta^{i} \rVert_{L^\infty(\Omega)} \lVert \xi_j\xi_0^{-1}\rVert_{L^\infty_\xi(\supp \Theta_{\e,\, \rm time})}\rp]^\ell \\
&\lesssim \lVert \xi_0^{-1}\rVert_{L^\infty_\xi(\supp \Theta_{\e,\, \rm time})} \lVert h_\e^{\alpha\beta} \rVert_{L^2} \lesssim \omega_\e^{1+\delta_1}\,.
\end{align*}
The estimates in \eqref{eq:time-dominated-Qh-estimate-derivative} follow similarly. Note that we only require $L^2$ norms in $\Omega$ in what follows as we will always be testing against $u_\e$ and its derivatives, which have compact support in $\Omega$.

Using $Q$, we may rewrite our commutator as
\begin{align}
\langle \p_\alpha u_\e, [A, \mathfrak{h}_\e^{\alpha\beta}]\p_\beta e_0 u_\e\rangle 
&= \langle \p_\alpha u_\e, A\lp( g^{00}e_0Q\mathfrak{h}_\e^{\alpha\beta}\p_\beta e_0 u_\e\rp)\rangle -\langle \p_\alpha u_\e g^{00}e_0Q\mathfrak{h}_\e^{\alpha\beta}, A \p_\beta e_0 u_\e \rangle\,. \label{eq:time-dominated-commutator}
\end{align}

\medskip
\noindent\textbf{Step 1:} integration by parts in $e_0$. In this step, we show:
\begin{align}
\lim_{\e\to 0}\langle \p_\alpha u_\e, [A, \mathfrak{h}_\e^{\alpha\beta}]\p_\beta e_0 u_\e\rangle 
&=\lim_{\e\to 0}\langle \p^2_{\alpha\beta} u_\e, [A,Q\mathfrak{h}_\e^{\alpha\beta}](g^{00} e_0^2 u_\e) \rangle+\lim_{\e\to 0}\langle \p_{\alpha} u_\e, [A,\p_\beta(Q\mathfrak{h}_\e^{\alpha\beta})](g^{00} e_0^2 u_\e) \,. \label{eq:time-dominated-claim-1}
\end{align}

To begin, we seek to move the $e_0$ derivative on $Q\mathfrak{h}_\e^{\alpha\beta}$ in \eqref{eq:time-dominated-commutator} onto $u_\e$ through integration by parts in $e_0$. Note that, whenever a derivative hits a coefficient of the limit metric $g$, that term is $o(1)$: using \eqref{eq:time-dominated-Qh-estimate},
\begin{align}
\langle \p g\, Q\mathfrak{h}_\e^{\alpha\beta}\p^2 u_\e, A \p u_\e \rangle \lesssim \lVert \p^2 u_\e \rVert_{L^4}\lVert \p u_\e \rVert_{L^4}\lVert Q\mathfrak{h}_\e^{\alpha\beta} \rVert_{L^2} \lesssim \omega_\e^{\delta_1}\to 0\,,\label{eq:time-dominated-derivatives-on-g}
\end{align}
where $\p$ denotes an arbitrary partial derivative and $g$ an arbitrary metric coefficient. We note that the order of the terms in the left-hand side is unimportant. We will use \eqref{eq:time-dominated-derivatives-on-g} and its variants implicitly in the sequel.

The first term of \eqref{eq:time-dominated-commutator} becomes
\begin{align*}
\langle \p_\alpha u_\e, A\lp( g^{00}e_0Q\mathfrak{h}_\e^{\alpha\beta}\p_\beta e_0 u_\e\rp)\rangle 
&= \langle \p_\alpha u_\e, [A,g^{00}e_0]\lp( Q\mathfrak{h}_\e^{\alpha\beta}\p_\beta e_0 u_\e\rp)\rangle+\langle \p_\alpha u_\e g^{00}, e_0 A\lp( Q\mathfrak{h}_\e^{\alpha\beta}\p_\beta e_0 u_\e\rp)\rangle\\
&\qquad -\langle \p_\alpha u_\e, A \lp( Q\mathfrak{h}_\e^{\alpha\beta}[g^{00}e_0,\p_\beta] e_0 u_\e\rp)\rangle-\langle \p_\alpha u_\e, A \lp( Q\mathfrak{h}_\e^{\alpha\beta}\p_\beta(g^{00} e_0^2 u_\e)\rp)\rangle\\
& = -\langle \p_\alpha u_\e, A \lp( Q\mathfrak{h}_\e^{\alpha\beta}\p_\beta(g^{00} e_0^2 u_\e)\rp)\rangle - \langle  g^{00}e_0 \p_\alpha u_\e , A\lp( Q\mathfrak{h}_\e^{\alpha\beta}\p_\beta e_0 u_\e\rp)\rangle+o(1)\,,
\end{align*}
and the second term yields
\begin{align*}
-\langle \p_\alpha u_\e g^{00}e_0Q\mathfrak{h}_\e^{\alpha\beta}, A \p_\beta e_0 u_\e \rangle 
%
%
&= \langle \p_\alpha u_\e Q\mathfrak{h}_\e^{\alpha\beta}, [g^{00} e_0,A\p_\beta]  e_0 u_\e \rangle+\langle \p_\alpha u_\e Q\mathfrak{h}_\e^{\alpha\beta}, A\p_\beta(g^{00} e_0^2 u_\e) \rangle\\
&\qquad +\langle g^{00} e_0\p_\alpha u_\e  Q\mathfrak{h}_\e^{\alpha\beta}, A \p_\beta e_0 u_\e \rangle +\langle (e_0g^{00}+\p_k \beta^k g^{00})\p_\alpha u_\e  Q\mathfrak{h}_\e^{\alpha\beta}, A \p_\beta e_0 u_\e \rangle\\
&= \langle \p_\alpha u_\e Q\mathfrak{h}_\e^{\alpha\beta}, A\p_\beta(g^{00} e_0^2 u_\e) \rangle +\langle g^{00} e_0\p_\alpha u_\e  Q\mathfrak{h}_\e^{\alpha\beta}, A \p_\beta e_0 u_\e \rangle+o(1)\,.
\end{align*}
Combining the previous computations gives 
\begin{align*}
\langle \p_\alpha u_\e, [A, \mathfrak{h}_\e^{\alpha\beta}]\p_\beta e_0 u_\e\rangle 
&=\langle \p_\alpha u_\e, [Q\mathfrak{h}_\e^{\alpha\beta}, A]\p_\beta(g^{00} e_0^2 u_\e) \rangle +\langle g^{00} e_0\p_\alpha u_\e ,[ Q\mathfrak{h}_\e^{\alpha\beta}, A] \p_\beta e_0 u_\e \rangle+o(1)\\
&=\langle \p_\alpha u_\e, [Q\mathfrak{h}_\e^{\alpha\beta}, A]\p_\beta(g^{00} e_0^2 u_\e) \rangle +o(1)\,, \numberthis \label{eq:time-dominated-claim-intermediate}
\end{align*}
because, by the symmetry of $\mathfrak{h}_\e^{\alpha\beta}$, the self-adjointness of $A$ (up to a compact operator), and \eqref{eq:time-dominated-derivatives-on-g}, we have
\begin{align*}
&\langle g^{00} e_0\p_\alpha u_\e ,[ Q\mathfrak{h}_\e^{\alpha\beta}, A] \p_\beta e_0 u_\e \rangle \\
\numberthis
\label{eq:symmetry1}
&\quad=\langle g^{00} e_0\p_\beta u_\e ,[ Q\mathfrak{h}_\e^{\alpha\beta}, A] \p_\alpha e_0 u_\e \rangle = \langle A(e_0\p_\beta u_\e g^{00} Q\mathfrak{h}_\e^{\alpha\beta}),  \p_\alpha e_0 u_\e \rangle-\langle A( e_0\p_\beta u_\e g^{00}), Q\mathfrak{h}_\e^{\alpha\beta} \p_\alpha e_0 u_\e\rangle\\
&\quad = \langle A(\p_\beta e_0 u_\e Q\mathfrak{h}_\e^{\alpha\beta}),  g^{00} e_0 \p_\alpha u_\e \rangle-\langle A(\p_\beta e_0 u_\e ), Q\mathfrak{h}_\e^{\alpha\beta} g^{00} e_0 \p_\alpha  u_\e\rangle +o(1)\\
&\quad=-\langle g^{00} e_0 \p_\alpha u_\e, [Q\mathfrak{h}_\e^{\alpha\beta}, A]\p_\beta e_0 u_\e \rangle +o(1)\,.
\end{align*}
To obtain \eqref{eq:time-dominated-claim-1}, we need only integrate \eqref{eq:time-dominated-claim-intermediate} by parts in $\p_\beta$.

\medskip 
\noindent\textbf{Step 2:} introducing $\Box_g$. In this step, we show:
\begin{align}
\begin{split}
\langle \p_\alpha u_\e, [A, \mathfrak{h}_\e^{\alpha\beta}]\p_\beta e_0 u_\e\rangle 
&=\langle \p^2_{\alpha\beta} u_\e, [A,Q\mathfrak{h}_\e^{\alpha\beta}]\Box_g u_\e \rangle+\langle \p_{\alpha} u_\e, [A,\p_\beta(Q\mathfrak{h}_\e^{\alpha\beta})]\Box_g u_\e\rangle\\
& \qquad +\langle \p_{\alpha}u_\e, [\p_i(Q\mathfrak{h}_\e^{\alpha\beta}),A]\lp(\tilde{g}^{ij}\p_{j\beta}^2 u_\e\rp)\rangle  +o(1) \,. 
\end{split}\label{eq:time-dominated-claim-2}
\end{align}

From \eqref{eq:Box-alternative-identity}, it is clear that terms  $g^{00}e_0^2$ in \eqref{eq:time-dominated-claim-1}  may be replaced by $\Box_g -\tilde{g}^{ij}\p_{ij}$, as the remaining terms in \eqref{eq:Box-alternative-identity}, which involve derivatives of $g$, do not contribute, c.f.\ \eqref{eq:time-dominated-derivatives-on-g}. Thus, we have
\begin{align*}
&\langle \p_\alpha u_\e, [A, \mathfrak{h}_\e^{\alpha\beta}]\p_\beta e_0 u_\e\rangle\\ 
&\quad=-\langle \p^2_{\alpha\beta} u_\e, [A,Q\mathfrak{h}_\e^{\alpha\beta}](\tilde g^{ij} \p_{ij}^2 u_\e) \rangle -\langle \p_{\alpha} u_\e, [A,\p_\beta(Q\mathfrak{h}_\e^{\alpha\beta})](\tilde g^{ij} \p_{ij}^2 u_\e)\\
&\quad\qquad +\langle \p^2_{\alpha\beta} u_\e, [A,Q\mathfrak{h}_\e^{\alpha\beta}]\Box_g u_\e \rangle+\langle \p_{\alpha} u_\e, [A,\p_\beta(Q\mathfrak{h}_\e^{\alpha\beta})]\Box_g u_\e\rangle   +o(1)\\
&\quad=\langle \p_{\alpha} u_\e, [A,Q\mathfrak{h}_\e^{\alpha\beta}]\p_\beta(\tilde g^{ij} \p_{ij}^2 u_\e) \rangle +\langle \p^2_{\alpha\beta} u_\e, [A,Q\mathfrak{h}_\e^{\alpha\beta}]\Box_g u_\e \rangle+\langle \p_{\alpha} u_\e, [A,\p_\beta(Q\mathfrak{h}_\e^{\alpha\beta})]\Box_g u_\e\rangle   +o(1)\,,
\end{align*}
integrating by parts in $\p_\beta$ to arrive at the final equality. Now, we integrate the first term in $\p_i$:
\begin{align*}
\langle \p_{\alpha} u_\e, [A,Q\mathfrak{h}_\e^{\alpha\beta}]\p_\beta(\tilde g^{ij} \p_{ij}^2 u_\e) \rangle = \langle \p_{i\alpha}^2u_\e, [Q\mathfrak{h}_\e^{\alpha\beta},A]\lp(\tilde{g}^{ij}\p_{j\beta}^2 u_\e\rp)\rangle +\langle \p_{\alpha}u_\e, [\p_i(Q\mathfrak{h}_\e^{\alpha\beta}),A]\lp(\tilde{g}^{ij}\p_{j\beta}^2 u_\e\rp)\rangle+o(1)\,,
\end{align*}
where we use \eqref{eq:time-dominated-derivatives-on-g} as needed. To obtain our claim, it only remains to show that the first term in the above formula vanishes in the limit. To see this, we argue as before, invoking the symmetry of $\mathfrak{h}_\e^{\alpha\beta}$ and $\tilde{g}^{ij}$ in their indices and self-adjointness of $A$ (up to a compact operator):
\begin{align}
\begin{split}
\langle \p_{i\alpha}^2u_\e, [Q\mathfrak{h}_\e^{\alpha\beta},A]\lp(\tilde{g}^{ij}\p^2_{j\beta} u_\e\rp)\rangle &=\langle \p_{j\beta}^2u_\e, [Q\mathfrak{h}_\e^{\alpha\beta},A]\lp(\tilde{g}^{ij}\p^2_{i\alpha} u_\e\rp)\rangle =-\langle \tilde{g}^{ij}\p^2_{i\alpha} u_\e, [Q\mathfrak{h}_\e^{\alpha\beta},A] \p^2_{j\beta} u_\e\rangle\\
&=-\langle \p_{i\alpha}^2u_\e ,[Q\mathfrak{h}_\e^{\alpha\beta},A]\lp(\tilde{g}^{ij}\p^2_{j\beta} u_\e\rp)\rangle+o(1)\,.
\end{split}
\label{eq:symmetry2}
\end{align}

\medskip
\noindent \textbf{Step 3:} conclusion. From \eqref{eq:time-dominated-claim-2}, and estimates \eqref{eq:bddbox}, \eqref{eq:time-dominated-Qh-estimate}, \eqref{eq:time-dominated-Qh-estimate-derivative}, we obtain
\begin{align*}
&\lim_{\e \to 0 }\int \p_\alpha u_\e [A,\mc{P}_{\rm time,\, \e}[{h}_\e^{\alpha\beta}]] \p_{\beta}e_0 u_\e \d x\\
&\quad\lesssim \lVert Q h_\e\rVert_{L^2(\Omega)} \lVert \p^2 u_\e\rVert_{L^4}\lVert \Box_g u_\e\rVert_{L^4} + \lVert \p (Q h_\e)\rVert_{L^2(\Omega)} \lVert \p u_\e\rVert_{L^4}\lVert \Box_g u_\e\rVert_{L^4}+\lVert \p_j(Q h_\e)\rVert_{L^2(\Omega)} \lVert \p^2 u_\e\rVert_{L^4}\lVert \p u_\e\rVert_{L^4}\\
&\quad\lesssim \omega_\e^{\delta_1}+\omega_\e^{\delta_1-\frac12}\to 0\,,
\end{align*}
as long as $\delta_1>\frac12$.
\end{proof}

\begin{proof}[Proof of Proposition~\ref{prop:case-ii-no-metric-osc-contribution}] 
It suffices to pick $\delta_1\in (\frac12, 1)$ and $\delta_2\in (\frac{1}{2\delta_1},1)$; for instance,  $\delta_1=\frac56$ and $\delta_2= \frac45$. Now combine Lemma~\ref{lemma:commutator-reduction} with Lemmas~\ref{lemma:case-ii-no-metric-osc-contribution-lowfreq}, \ref{lemma:case-ii-no-metric-osc-contribution-spafreq} and \ref{lemma:case-ii-no-metric-osc-contribution-timefreq}.
\end{proof} 

\subsection{Conclusion of the proof}

Combining the results from the previous subsections we finish the proof of Theorem \ref{thm:linear-wave-oscillating}.

\begin{proof}[Proof of \eqref{eq:propagationprop} in Theorem \ref{thm:linear-wave-oscillating}(\ref{it:thm-lin-wave-energy-density})]
By Proposition \ref{prop:case-ii-no-metric-osc-contribution}, whenever $A$ satisfies \eqref{eq:hypsymbol},
$$0=\lim_{\e\to 0}\langle Ae_0 (u_\e-u), H_\e\rangle = \langle (\xi_0-\beta^i\xi_i)\sigma, a\rangle -\langle  (\xi_0-\beta^i\xi_i) \lambda,a\rangle\,.$$ Since $\nu$ is supported on the zero mass shell $\{g^{\a\beta}\xi_\a\xi_\beta=0\}$, and since  $\xi_0-\beta^i\xi_i$ never vanishes on that set, see \eqref{eq:lightcone}, it follows that $\langle \lambda, \tilde a\rangle=\langle \sigma, \tilde a\rangle$ for any $\tilde a\in C^\infty_c(S^*\mc M)$ which is \textit{odd and 1-homogeneous in $\xi$}. Thus, according to Theorem \ref{thm:linearwave}\eqref{it:propagation}, for any such $\tilde a$,
\begin{equation}
\label{eq:propagationpropsec4}
\int_{S^*\mc M} \lp[g^{\alpha\beta}\xi_\alpha\p_{x^\beta}\tilde a-\frac12\p_{x^\mu}g^{\alpha\beta}\xi_\alpha\xi_\beta\p_{\xi_\mu}\tilde a\rp] \tp d\nu 
= -\int_{S^*\mc M} \tilde a\, \tp{d}(\Re \sigma)
= -\int_{S^*\mc M} \tilde a\, \tp{d}(\Re \lambda)
\,.
\end{equation}
However, $\nu$ is even and $\lambda$ is odd, c.f.\ Theorem \ref{thm:linearwave}\eqref{it:parity}: thus, whenever $\tilde a$ is even in $\xi$, $\langle \Re\lambda, \tilde a\rangle =0$, and likewise the right-hand side of \eqref{eq:propagationpropsec4} vanishes as well in that case. Hence we see that \eqref{eq:propagationpropsec4} actually holds for \textit{any} $\tilde a\in C^\infty_c(S^*\mc M)$, as wished.
\end{proof}

\section{Nonlinear wave map systems with oscillating coefficients}


\subsection{Proof of Theorem~\ref{thm:wave-map}}
\label{sec:proof-wave-maps}

We recall that Theorem \ref{thm:wave-map} is concerned with sequences of solutions to
\begin{equation}
\Box_{g_\e} u_\e^{I} =-\Gamma_{JK}^I(u_\e)g_\e^{-1}(\tp{d} u_\e^J,\d u_\e^K)+f_\e^I\,,\qquad u_\e^I,f_\e^I\colon\mathbb{R}^{1+n}\to \mathbb{R}\,, \qquad  {I,J,K}\in\{1,\dots,N\}\,. \label{eq:wave-map-system}
\end{equation}
%
%
We will reduce the study of the wave map system \eqref{eq:wave-map-system} to the case of wave maps into a flat target, as studied in Section \ref{sec:linear-wave-non-oscillating} and \ref{sec:linear-wave-oscillating}. By repeating the arguments detailed in Section \ref{sec:linear-wave-non-oscillating} we see that, by replacing  $u_\e$ with $\chi u_\e$ for an arbitrary smooth cut-off function $\chi$, there is no loss of generality in assuming that the sequence $(u_\e)$ has uniformly bounded support.

Before proceeding with the proof, let us introduce the notation
$$
H_\e^I\equiv (\Box_{g_\e}-\Box_{g})u_\e^I,\qquad
Q_\e^I \equiv -\Gamma_{JK}^I(u_\e)g_\e^{-1}(\tp{d} u_\e^J,\d u_\e^K), \qquad
Q^I \equiv \tp{w-}\lim_{\e \to 0} Q_\e^I\,.
$$
Hence we may rewrite \eqref{eq:wave-map-system} as
$$\Box_g u^I_\e =F_\e^I,\qquad \tp{where } F_\e^I \equiv -H_\e^I + Q_\e^I + f_\e^I\,.$$
In addition to the H-measures defined in \eqref{eq:defHmeasureintro}, we will need the H-measure 
$$
\lp((\p_0 u^I_\e,\p_1 u^I_\e, \dots, \p_n u^I_\e, F_\e^I)_{I=1}^N\rp)_\e \wH 
\lp(
\begin{bmatrix}
\tilde \nu^{IJ} & \tilde \sigma^{IJ} \\
(\tilde \sigma^{IJ})^* & \star
\end{bmatrix}
\rp)_{I,J=1}^N\,.
$$

We deal with the terms $H_\e^I$ and $Q_\e^I$ separately. For the former, it suffices to apply, with minor modifications, the arguments in Section \ref{sec:linear-wave-oscillating}:

\begin{lemma}
 \label{lemma:rhssection4}
 Under Hypotheses \ref{hyp:main} and assuming that $u_\e\to u$ in $C^0_\tp{loc}$, for any $A\in \Psi^0_{1,c}$ satisfying \eqref{eq:hypsymbol},
  $$\lim_{\e\to0} \langle  A \,e_0(u_\e^I-u^I),\mf g_{IL}(u_\e) H_\e^L\rangle=0.$$
\end{lemma}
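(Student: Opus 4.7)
The strategy is to reduce the claim to an adaptation of the linear scalar result, Proposition~\ref{prop:case-ii-no-metric-osc-contribution}. Since $u_\e\to u$ uniformly on compact sets and $\mf g_{IL}$ is continuous, we have $\mf g_{IL}(u_\e)\to\mf g_{IL}(u)$ in $L^\infty_\tp{loc}$; combined with the uniform $L^2_\tp{loc}$-bound on $A e_0(u_\e^I-u^I)$ and on $H_\e^L$ (the latter following from Hypothesis~\ref{hyp:main}\eqref{it:hyprates} via the decomposition \eqref{eq:Box-difference-alternative-identity-1}--\eqref{eq:Box-difference-alternative-identity-2}), this lets us replace $\mf g_{IL}(u_\e)$ by $\mf g_{IL}(u)$ up to an $o(1)$ error. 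Since $\mf g_{IL}(u)$ is only continuous, we further approximate it in $L^\infty_\tp{loc}$ by $m^{IL}_\delta\in C^\infty_c(\mc M)$, chosen symmetric in $(I,L)$. Then $m^{IL}_\delta A\in\Psi^0_{1,c}$ is a bona fide pseudo-differential operator whose principal symbol $m^{IL}_\delta(x)\sigma^0(A)(x,\xi)$ is real and even in $\xi$ (so that \eqref{eq:hypsymbol} is preserved), and the approximation error vanishes as $\delta\to 0$ uniformly in $\e$.

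It therefore suffices to show, for each $\delta$,
$$\lim_{\e\to 0}\sum_{I,L}\langle (m^{IL}_\delta A)\,e_0(u_\e^I-u^I),\,H_\e^L\rangle=0.$$
Exploiting the symmetry of $m^{IL}_\delta$ in $(I,L)$, we symmetrize the sum and then follow the proof of Proposition~\ref{prop:case-ii-no-metric-osc-contribution}. The commutator reduction of Lemma~\ref{lemma:commutator-reduction} extends to the cross-index setting — the symmetrization identities \eqref{eq:symmetry1}--\eqref{eq:symmetry2} closing by pairing the $(I,L)$ and $(L,I)$ contributions, which is permissible because $m^{IL}_\delta = m^{LI}_\delta$ and $h_\e^{\alpha\beta} = h_\e^{\beta\alpha}$ — and produces a bilinear commutator expression
$$\sum_{I,L}\int\p_\alpha(u_\e^I-u^I)\,[m^{IL}_\delta A,\,h_\e^{\alpha\beta}]\,\p_\beta e_0(u_\e^L-u^L)\d x,$$
modulo $o(1)$. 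The three-frequency estimates of Lemmas~\ref{lemma:case-ii-no-metric-osc-contribution-lowfreq}--\ref{lemma:case-ii-no-metric-osc-contribution-timefreq} then close this expression using component-wise $L^4_\tp{loc}$-bounds on $\p u_\e^I$ and $\p^2 u_\e^I$, together with the component-wise $L^2_\tp{loc}$-bound on $\Box_g u_\e^I$ inherited from the wave-map equation $\Box_{g_\e}u_\e^I=-Q_\e^I+f_\e^I$.

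The main obstacle is the adaptation of the high-frequency time-dominated estimate (Lemma~\ref{lemma:case-ii-no-metric-osc-contribution-timefreq}) to the wave-map setting: in Section~\ref{sec:linear-wave-oscillating} one uses the $L^4_\tp{loc}$-bound on $\Box_g u_\e$, whereas here $\Box_g u_\e^L$ is only bounded in $L^2_\tp{loc}$, since the semilinearity $Q_\e^L$ is only quadratic in the $L^4$-functions $\p u_\e$. We expect to remedy this by replacing the Hölder bound $\|Qh_\e\|_{L^2}\|\p^2 u_\e\|_{L^4}\|\Box_g u_\e\|_{L^4}$ used in the proof of that lemma by $\|Qh_\e\|_{L^4}\|\p^2 u_\e\|_{L^4}\|\Box_g u_\e\|_{L^2}$, and then interpolating \eqref{eq:time-dominated-Qh-estimate} against the trivial bound $\|Qh_\e\|_{L^\infty}\lesssim\omega_\e$ to obtain $\|Qh_\e\|_{L^4}\lesssim\omega_\e^{1+\delta_1/2}=o(\omega_\e)$, which is sufficient.
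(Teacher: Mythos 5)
Your proposal follows essentially the paper's route: reduce $\mf g_{IL}(u_\e)$ to $\mf g_{IL}(u)$ by uniform convergence, run the commutator reduction of Lemma~\ref{lemma:commutator-reduction}, and close the symmetrization identities \eqref{eq:symmetry1}--\eqref{eq:symmetry2} by pairing the $(I,L)$ and $(L,I)$ contributions via the symmetry of the target metric. Your device of approximating $\mf g_{IL}(u)$ by smooth symmetric $m^{IL}_\delta$ and absorbing it into the operator as $m^{IL}_\delta A\in\Psi^0_{1,c}$ is a clean variant of what the paper does (the paper keeps $\mf g_{IL}$ as a multiplier and notes that terms where a derivative falls on it carry one fewer derivative of $u_\e$ and are $o(1)$); both work, and the double limit $\e\to0$ then $\delta\to0$ is justified by the uniform $L^2$ bounds you cite.

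The issue you flag about $\Box_g u_\e^L$ being bounded only in $L^2_\tp{loc}$ (since $Q_\e^L$ is quadratic in $\p u_\e\in L^4$) is genuine, and the paper's proof passes over it silently; re-H\"oldering Step 3 of Lemma~\ref{lemma:case-ii-no-metric-osc-contribution-timefreq} as $\lVert Q\mathfrak{h}_\e\rVert_{L^4}\lVert\p^2u_\e\rVert_{L^4}\lVert\Box_g u_\e\rVert_{L^2}$ is the right move. However, your interpolation endpoint $\lVert Q\mathfrak{h}_\e\rVert_{L^\infty}\lesssim\omega_\e$ is not justified: $Q\circ\mc{P}_{\rm time,\,\e}$ contains the $\e$-dependent cutoff $\Theta_{\rm time,\,\e}$, whose second factor is a $0$-homogeneous-type multiplier, and such operators are not uniformly bounded on $L^\infty$; nor does the symbol smallness $|\sigma(Q)|\lesssim\omega_\e^{\delta_1}$ on $\supp\Theta_{\rm time,\,\e}$ transfer to an $L^\infty$ operator bound without a kernel estimate you have not supplied. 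A safer repair uses only the $L^2$ bounds you already have: by Gagliardo--Nirenberg in dimension $1+n=3$, \eqref{eq:time-dominated-Qh-estimate} and \eqref{eq:time-dominated-Qh-estimate-derivative} give $\lVert Q\mathfrak{h}_\e\rVert_{L^4}\lesssim\lVert Q\mathfrak{h}_\e\rVert_{L^2}^{1/4}\lVert\p(Q\mathfrak{h}_\e)\rVert_{L^2}^{3/4}\lesssim\omega_\e^{1/4+\delta_1}$, which is $o(\omega_\e)$ precisely when $\delta_1>\frac34$ --- compatible with the choice $\delta_1=\frac56$ in the proof of Proposition~\ref{prop:case-ii-no-metric-osc-contribution}. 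The term pairing $\p_\beta(Q\mathfrak{h}_\e^{\a\beta})$ with $\Box_g u_\e^L$ needs the same treatment (there one also uses that $\p_0^2(Q\mathfrak{h}_\e)$ is bounded in $L^2$ since $g_\e$ is bounded in $W^{1,\infty}_\tp{loc}$). With those substitutions your argument closes.
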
 

\begin{proof}
The proof consists of a small modification of the arguments used to prove Propositions \ref{prop:case-i-no-metric-osc-contribution} and \ref{prop:case-ii-no-metric-osc-contribution}. Here we only point out the modifications needed in the proof of Proposition \ref{prop:case-ii-no-metric-osc-contribution}, as the former is much simpler. Note that, by the local uniform convergence of $u_\e$, it is enough to show that
$$\lim_{\e\to0} \langle  A \,e_0(u_\e^I-u^I),\mf g_{IL}(u) H_\e^L\rangle=0.$$
For simplicity of notation we suppress the dependence of $\mf g_{IL}$ on $u$.

Similarly to Lemmas \ref{lemma:commutator-reduction-easy} and \ref{lemma:commutator-reduction}, we have
\begin{equation}
2\lim_{\e\to 0}\langle A e_0 (u^I_\e-u^I),  g_{IL}H^L_\e\rangle = 
\lim_{\e\to 0} \int g_{IL} \mf \p_\alpha (u^I_\e-u^I) [A, h_\e^{\alpha\beta}]\p_\beta e_0 (u^L_\e-u^L) \d x 
\,.\label{eq:commutator-reduction-revisited}
\end{equation}
Indeed, the proofs of these lemmas consists of integrating by parts using the self-adjointness of $A$ to produce commutators. With $\mf g_{IL}$ now in the bracket, the integration by parts generates terms with derivatives of $\mf g_{IL}$, which however are compact, as they have one fewer derivative on $u_\e^J$. Using the self-adjointness also yields the same conclusion: e.g.\ in Step 2 of Lemma \ref{lemma:commutator-reduction}, again writing $w_\e^J\equiv u_\e^J-u_\e^J$, we find the commutator
\begin{align*}
\langle \mf g_{IL} e_0 w_\e^L, [A,\tilde h_\e^{00}] e_0^2 w_\e^I\rangle &= \langle \mf g_{IL} \tilde h_\e^{00}e_0^2 w_\e^L, A e_0 w_\e^I \rangle+\langle e_0 w_\e^L, A ( \mf g_{IL} \tilde h_\e^{00} e_0^2 w_\e^I) \rangle + o(1)\\
&=\langle \mf g_{IL} \tilde  h_\e^{00}e_0^2 w_\e^L, A e_0 w_\e^I \rangle+\langle \mf g_{IL} \tilde  h_\e^{00}e_0^2 w_\e^I, A e_0 w_\e^L \rangle + o(1)\\
&= 2\langle  \mf g_{IL} \tilde h_\e^{00}e_0^2 w_\e^L, A e_0 w_\e^I \rangle+o(1)\,,
\end{align*}
due to the symmetry of $\mf g_{IL}$ in $I,L$. Arguing similarly in the other steps, \eqref{eq:commutator-reduction-revisited} is established.

The proofs of Proposition \ref{prop:case-i-no-metric-osc-contribution} and Lemmas \ref{lemma:case-ii-no-metric-osc-contribution-lowfreq} and \ref{lemma:case-ii-no-metric-osc-contribution-spafreq} only require cosmetic modifications. In the proof of Lemma \ref{lemma:case-ii-no-metric-osc-contribution-timefreq}, the fact that $I=L$ is used in a non-trivial way in the arguments involving the symmetry in $\a,\beta$ of $h^{\a\beta}_\e$ in \eqref{eq:symmetry1} and \eqref{eq:symmetry2}. However, since we now sum over all $I,L$ and $\mf g_{IL}=\mf g_{LI}$, these arguments still apply: for instance, the analogue of \eqref{eq:symmetry2} is now
\begin{align*}
\langle \mf g_{IL} \p_{i\alpha}^2u^I_\e, [Q\mathfrak{h}_\e^{\alpha\beta},A]\lp(\tilde{g}^{ij}\p^2_{j\beta} u^L_\e\rp)\rangle &=\langle\mf g_{IL} \p_{j\beta}^2u^L_\e, [Q\mathfrak{h}_\e^{\alpha\beta},A]\lp(\tilde{g}^{ij}\p^2_{i\alpha} u^I_\e\rp)\rangle =-\langle \tilde{g}^{ij}\p^2_{i\alpha} u^I_\e, [Q\mathfrak{h}_\e^{\alpha\beta},A] (\mf g_{IL}\p^2_{j\beta} u_\e^L)\rangle\\
&=-\langle \mf g_{IL}\p_{i\alpha}^2u^I_\e ,[Q\mathfrak{h}_\e^{\alpha\beta},A]\lp(\tilde{g}^{ij}\p^2_{j\beta} u^L_\e\rp)\rangle+o(1)\,,
\end{align*}
where we exchanged $\a$ with $\beta$, $I$ with $L$ and $i$ with $j$ in the first equality. Here, we have also commuted through $\mf g_{IL}$ to place it on the right hand side; this follows similarly as for the commutation of $\tilde{g}^{ij}$, since $\mf g_{IL}$ is independent of $\e$.
\end{proof}
 
In light of Theorem~\ref{thm:linear-wave-oscillating}, the main remaining point in the proof of Theorem \ref{thm:wave-map} is to characterize the contribution of $Q_\e^I$ to the transport equation. This is done in the next lemma.

\begin{lemma}
\label{lemma:contributionsRHSwavemap}
Assuming that Hypotheses~\ref{hyp:main} hold and that $u_\e^I \to u^I$ in $C^0_\tp{loc}$, then for any $A\in \Psi^0_{1,c}$
\begin{align*}
\lim_{\e \to 0} \langle A(\p_\gamma u_\e^I-\p_\gamma u^I), Q^L_\e-Q^L\rangle =
\langle g^{\alpha \beta}\lp[\Gamma_{JK}^L(u)+\Gamma_{KJ}^L(u)\rp] \p_\beta u^K \tilde \nu^{IJ}_{\gamma\alpha},\sigma^0(A)\rangle\,,
\end{align*}
Additionally, if 
$\Gamma_{JK}^I$ are Christoffel symbols with respect to a Riemannian metric $\mathfrak{g}=\mathfrak{g}_{IJ}(u) \d u^I\otimes \tp{d} u^J,$
then%
\begin{equation}
\label{eq:contributionHepsilon}
\begin{split}
\lim_{\e \to 0}  \langle A(\p_\gamma u_\e^I-\p_\gamma u^I) ,& \mf g_{IL}(Q_\e^L-Q^L)\rangle
=\\
&=
-\langle
 g^{\a\beta} \p_{x^\beta} \mf g_{IJ}\, \tilde \nu^{IJ}_{\gamma\a},\sigma^0(A)\rangle
+ 2i
\langle g^{\a\beta}\p_{u^J} \mf  g_{IK}\,  \p_{x^\beta} u^K \Im \tilde\nu^{IJ}_{\gamma\a}, \sigma^0(A)\rangle  \,.
\end{split}
\end{equation}
\end{lemma}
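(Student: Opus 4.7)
The plan is to exploit the uniform convergences $u_\e\to u$, $g_\e\to g$ and the continuity of the Christoffel symbols to reduce $Q^L_\e-Q^L$ to $-\Gamma^L_{JK}(u)g^{\alpha\beta}[\p_\alpha u_\e^J\p_\beta u_\e^K - \p_\alpha u^J\p_\beta u^K]$: the discrepancy is $o_{L^\infty}(1)$ times two $L^4$-bounded factors, hence $o_{L^2}(1)$, and vanishes when paired against $A\p_\gamma w_\e^I\in L^2$. Setting $w_\e^I\equiv u_\e^I-u^I$ and expanding the bracket via $\p_\alpha u_\e^J\p_\beta u_\e^K - \p_\alpha u^J\p_\beta u^K = \p_\alpha w_\e^J\p_\beta u^K + \p_\alpha u^J\p_\beta w_\e^K + \p_\alpha w_\e^J\p_\beta w_\e^K$ splits the problem into two bilinear terms and one trilinear remainder.

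The bilinear pieces are immediate: absorbing the smooth factor $-\Gamma^L_{JK}(u)g^{\alpha\beta}\p_\beta u^K$ (respectively $-\Gamma^L_{JK}(u)g^{\alpha\beta}\p_\alpha u^J$) into the symbol of $A$, the defining property \eqref{eq:Hmeasurecharact} of the H-measure identifies the limit as a pairing against $\tilde\nu^{IJ}_{\gamma\alpha}$; relabeling dummy indices and exploiting the symmetry of $g^{\alpha\beta}$ combines the two contributions into the stated expression involving $\Gamma^L_{JK}+\Gamma^L_{KJ}$.

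The main obstacle will be showing that the trilinear remainder $T_\e\equiv \int A\p_\gamma w_\e^I\cdot b\,g^{\alpha\beta}\p_\alpha w_\e^J\p_\beta w_\e^K\,dx$ vanishes for arbitrary smooth $b$. My strategy is the polarization identity $g^{-1}(dw_\e^J, dw_\e^K) = \tfrac{1}{2}\Box_g(w_\e^J w_\e^K) - \tfrac{1}{2}[w_\e^J\Box_g w_\e^K + w_\e^K\Box_g w_\e^J]$: since $w_\e^{J,K}\to 0$ in $L^\infty$ and $\Box_g w_\e^{K,J}$ is bounded in $L^4$ by \eqref{eq:bddbox}, the bracket is strongly $o_{L^2}(1)$, reducing the problem to controlling $\int A\p_\gamma w_\e^I\cdot b\,\Box_g v_\e\,dx$ with $v_\e\equiv w_\e^Jw_\e^K\to 0$ in $L^\infty$ and $\p v_\e\to 0$ in $L^4$ (hence $v_\e\to 0$ strongly in $W^{1,4/3}$ on bounded support). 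After writing $\Box_g$ in its divergence form, two successive integrations by parts transfer both derivatives off $v_\e$; the resulting terms pair either $\p v_\e\to 0$ in $L^4$ against $L^4$-bounded factors or $v_\e\to 0$ in $W^{1,4/3}$ against $W^{-1,4}$-bounded factors, where the crucial observation is that the second-order operator $g^{\alpha\beta}A\p_\alpha\p_\beta$ acting on $w_\e^I$ can be rewritten as $A\Box_g w_\e^I$ modulo lower-order terms, so the $L^4$-bound \eqref{eq:bddbox} applies, and the commutators $[\p_\alpha,A]\in\Psi^0$ and $[bg^{\alpha\beta},A]\in\Psi^{-1}$ contribute only strictly lower-order corrections. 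This may be viewed as a pseudo-differential adaptation of the classical trilinear compensated compactness Lemma~\ref{lemma:intro-CC}\eqref{it:trilinCC}.

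For the second statement, the Christoffel identity $\mathfrak g_{IL}(\Gamma^L_{JK}+\Gamma^L_{KJ})=\p_{u^J}\mathfrak g_{IK}+\p_{u^K}\mathfrak g_{IJ}-\p_{u^I}\mathfrak g_{JK}$ (valid by torsion-freeness of the Levi-Civita connection and the standard first Christoffel formula) is substituted into the first conclusion; the chain rule $\p_{x^\beta}\mathfrak g_{IJ}=\p_{u^K}\mathfrak g_{IJ}\p_{x^\beta}u^K$ together with the Hermitian symmetry $\tilde\nu^{JI}_{\alpha\gamma}=\overline{\tilde\nu^{IJ}_{\gamma\alpha}}$ shows that the symmetric-in-$(I,J)$ piece $\p_{u^K}\mathfrak g_{IJ}$ couples only with $\Re\tilde\nu^{IJ}$, producing the term $-\langle g^{\alpha\beta}\p_{x^\beta}\mathfrak g_{IJ}\tilde\nu^{IJ}_{\gamma\alpha},\sigma^0(A)\rangle$, whereas the remaining cross terms $\p_{u^J}\mathfrak g_{IK}-\p_{u^I}\mathfrak g_{JK}$ combine under the $I\leftrightarrow J$ relabeling to yield the imaginary correction $2i\langle g^{\alpha\beta}\p_{u^J}\mathfrak g_{IK}\p_{x^\beta}u^K\Im\tilde\nu^{IJ}_{\gamma\alpha},\sigma^0(A)\rangle$.
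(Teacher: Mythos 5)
Your overall architecture matches the paper's: the same reduction of $Q^L_\e-Q^L$ to $-\Gamma^L_{JK}(u)g^{-1}$ applied to the gradients (using uniform convergence of $u_\e$, $g_\e$ and continuity of $\Gamma$, plus the bilinear div--curl lemma to identify $Q^L$), the same splitting into two bilinear pieces plus a trilinear remainder, the same H-measure identification of the bilinear pieces, and the same Christoffel-symbol algebra $\mf g_{IL}\Gamma^L_{JK}=\tfrac12(\mf g_{KI,J}+\mf g_{JI,K}-\mf g_{JK,I})$ combined with $\tilde\nu^{JI}=(\tilde\nu^{IJ})^*$ for the second statement. Where you genuinely diverge is the trilinear remainder: the paper disposes of it by invoking its Lemma~\ref{lemma:trilinear}, whose proof never differentiates $u_\e$ twice --- it packages $Xu^1_\e\,g^{-1}(\tp du^2_\e,\tp du^3_\e)$ as a div--curl product of the energy current $J^X$ with $\tp d u_\e$ and applies Theorem~\ref{thm:p-q-divcurl}. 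You instead write $g^{-1}(\tp dw_\e^J,\tp dw_\e^K)=\tfrac12\Box_g(w_\e^Jw_\e^K)$ modulo terms that vanish strongly, and then integrate $\Box_g$ back onto $A\p_\gamma w_\e^I$. This is a legitimate alternative and is in the spirit of the paper's Section~\ref{sec:linear-wave-oscillating}; its advantage is that it makes transparent exactly which differential information on $u_\e$ is consumed (the $L^2$-bound on $\Box_g w_\e^I$), while the paper's route is shorter and stays within the classical $L^p$ div--curl framework.

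One caution on your key step: after the two integrations by parts the operator that falls on $w_\e^I$ is third order, $g^{\a\beta}\p^3_{\a\beta\gamma}$, and rewriting it as $\p_\gamma\Box_g$ leaves behind $(\p_\gamma g^{\a\beta})\p^2_{\a\beta}w_\e^I$ together with the order-two commutator $[\Box_g,bA\p_\gamma]w_\e^I$. Neither of these is ``lower order'' in the sense of being bounded in some $L^p$ --- under Hypotheses~\ref{hyp:main} second derivatives of $w_\e^I$ may blow up in $L^4$, and there is no assumed rate relation between $\lVert w_\e\rVert_{L^\infty}$ and $\lVert\p^2 w_\e\rVert_{L^4}$ --- so they can only be controlled as elements of $W^{-1,4}_\tp{loc}$ (a total derivative of an $L^4$-bounded quantity) paired against $v_\e=w_\e^Jw_\e^K\to 0$ strongly in $W^{1,4/3}$. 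Your write-up does name this duality, so the argument closes, but you should make explicit that this is where the weight of the proof sits, and that it requires $L^p$-boundedness of the relevant $\Psi^0$ and commutator operators for $p\neq 2$, which goes beyond the $L^2$ theory recorded in Section~\ref{sec:symbols-psidos}.
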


\begin{proof} 
We have that $\tp{w-}\lim Q_\e^L = \tp{w-}\lim -\Gamma_{JK}^L(u) g^{-1}(\d u_\e^J, \d u_\e^K)$, by continuity of $\Gamma_{JK}^L$ and uniform convergence of $g_\e$ and $u_\e$. Hence, by Lemma \ref{lemma:compensatedcompactnessWave},
\begin{align}
Q_\e^L \w Q^L = -\Gamma_{JK}^L(u) g^{-1}(\tp{d}u^J,\tp{d}u^K)\quad \tp{ in } L^2\,.
\label{eq:convergencenonlinearities}
\end{align}

The first part of the lemma is now a direct consequence of the trilinear compensated compactness of Lemma~\ref{lemma:trilinear}. Indeed, for any $A\in\Psi^0_{1,c}$, we have
\begin{align*}
&\lim_{\e\to 0}\langle A(\p_\gamma u_\e^I-\p_\gamma u^I), Q^L_\e-Q^L\rangle \\
&\quad= \lim_{\e\to 0}\big\langle \Gamma_{JK}^L(u)\, A(\p_\gamma u_\e^I-\p_\gamma u^I),
g^{-1}(\tp{d}u^J,\tp{d}u^K)-g^{-1}(\tp{d}u_\e^J,\tp{d}u_\e^K)\big\rangle \\
&\quad= \lim_{\e\to 0} - \big\langle \Gamma_{JK}^L(u)\, A(\p_\gamma u_\e^I-\p_\gamma u^I), 
g^{-1}\lp(\tp{d}(u^J-u^J_\e),\tp{d}(u^K-u^K_\e)\rp)\big\rangle \\
&\quad\qquad+ \lim_{\e\to 0}\big\langle \Gamma_{JK}^L(u)\, A(\p_\gamma u_\e^I-\p_\gamma u^I), g^{-1}\lp(\tp{d}(u^J-u^J_\e),\tp{d}u^K\rp)+g^{-1}\lp(\tp{d}u^J,\tp{d}(u^K -u^K_\e)\rp)\big\rangle
\end{align*}
again by continuity of $\Gamma^L_{JK}$ and uniform convergence of $u_\e$ and $g_\e$. By Lemma~\ref{lemma:trilinear}, the first limit on the right-hand side vanishes, hence we arrive at
\begin{align*}
\lim_{\e\to 0}\langle A(\p_\gamma u_\e^I-\p_\gamma u^I), Q^L_\e-Q^L\rangle &= 
\big\langle \Gamma_{JK}^L(u)\,\tilde \nu^{IJ}_{\gamma\alpha}, \,  g^{\alpha\beta}\p_{\beta}u^K \sigma^0(A) \big\rangle + 
\big\langle \Gamma_{JK}^L(u) \, \tilde \nu^{IK}_{\gamma\alpha}, \, g^{\alpha\beta}\p_{\beta}u^J\sigma^0(A) \big\rangle\,.
\end{align*}

For the second part, we begin by recalling the formula for the Christoffel symbols:
\begin{equation}
\mf g_{IL} \Gamma^L_{JK}  = \frac 1 2 \lp ( \mf g_{KI,J} + \mf g_{JI,K} - \mf g_{JK,I}\rp)
\,,
\label{eq:Christoffelsymbols}
\end{equation}
where $\mf g_{JK,I}\equiv \p_{u^I} \mf g_{JK}$, and likewise for the other terms. Then
\begin{align*}
2\mf g_{IL} \Gamma^L_{JK}\p_\beta u^K  \tilde \nu^{IJ} 
& = \mf g_{IJ,K}  \p_\beta u^K \tilde\nu^{IJ}+ \lp( \mf g_{IK,J} -\mf g_{JK,I}\rp ) \p_\beta u^K \tilde \nu^{IJ}\\
& = \mf g_{IJ,K} \p_\beta u^K  \tilde \nu^{IJ} + \mf g_{IK,J} \p_\beta u^K \tilde \nu^{IJ} -\mf g_{IK,J} \p_\beta u^{K} \tilde \nu^{JI}\\
& = \p_{x^\beta} \mf g_{IJ}\,  \tilde \nu^{IJ} + 2i \,\mf g_{IK,J} \p_{x^\beta} u^K \Im \tilde\nu^{IJ}
\end{align*}
where in the last line we used the fact that $\tilde \nu^{JI} = (\tilde \nu^{IJ})^*$
To conclude, it now suffices to use the first part of the lemma, recalling that $\Gamma^I_{JK}=\Gamma^I_{KJ}$.
\end{proof}

\begin{proof}[Proof of Theorem \ref{thm:wave-map}]
We first note that $(g,(u^I)_{I=1}^N, (f^I)_{I=1}^N)$ is a distributional solution of \eqref{eq:wave-map-system}: this follows at once from Proposition~\ref{prop:limit-equation-LinWaveOsc} and \eqref{eq:convergencenonlinearities}.

Using the Localization Lemma, just as in the proof of Theorem \ref{thm:linearwave}, we find that
\begin{equation}
\label{eq:threemeasures}
\tilde \nu_{\a\beta}^{IJ} = \xi_\a\xi_\beta \nu^{IJ},
\qquad \tilde \lambda^{IJ}_{ 0}=\xi_0 \lambda^{IJ},
\qquad \tilde \sigma^{IJ}_{0}=\xi_0 \sigma^{IJ}\,,
\end{equation} 
for some Radon measures $\nu^{IJ}, \lambda^{IJ}, \sigma^{IJ}$ and for each $I$ and $J$. Likewise, the measures $\nu^{IJ}, \lambda^{IJ}, \sigma^{IJ}$ are supported on the zero mass shell of $g$, and hence
the measures $\nu\equiv \mf g_{IJ} \nu^{IJ}$,  $\lambda\equiv \mf g_{IJ} \lambda^{IJ}$ and $\sigma\equiv \mf g_{IJ} \sigma^{IJ}$ are also supported on the same set. 
Furthermore, $\nu^{IJ}$ and hence also $\nu$ are even, whereas $\sigma$ and $\lambda$ are odd.  By the uniform convergence of both $u_\e$ and $g_\e$, and using the polarization identity,
\begin{align*}
&\lim_{\e\to 0}\int \mathbb{L}_{\alpha\beta}[u_\e]Y^\alpha Y^\beta \d \tp{Vol}_{g_\e}- \int \mathbb{L}_{\alpha\beta}[u] Y^\alpha Y^\beta \d \tp{Vol}_{g}\\
\quad&=\lim_{\e\to 0}\int \mathfrak{g}_{IJ}(u) \lp[\p_\alpha u_\e^I\p_\beta u_\e^J-\p_\alpha u^I \p_\beta u^J\rp] Y^\alpha Y^\beta \d \tp{Vol}_{g}\\
&= \lim_{\e\to0} \int \mathfrak{g}_{IJ}(u) \p_\alpha (u_\e^I-u^I)\p_\beta (u_\e^J-u^J) Y^\alpha Y^\beta \d \tp{Vol}_{g} = \langle \nu^{IJ},\sqrt{|g|}\mathfrak{g}_{IJ}\xi_\alpha\xi_\beta Y^\alpha Y^\beta  \rangle
= \langle \nu,\sqrt{|g|}\xi_\alpha\xi_\beta Y^\alpha Y^\beta \rangle\,,
\end{align*}
for any test vector field $Y$. This proves part (\ref{it:thm-wave-map-limit-equation}).

It remains to prove the propagation property of $\nu$. We first note that \begin{equation}
\label{eq:propagationpropsec5}
\int_{S^*\mc M} \lp[g^{\alpha\beta}\xi_\alpha\p_{x^\beta}\tilde{a}-\frac12\p_{x^\gamma}g^{\alpha\beta}\xi_\alpha\xi_\beta(\p_{\xi_\gamma}\tilde{a})\rp] \d\nu^{IJ} =
- \int_{S^*\mc M} \tilde{a}\tp{\,d}(\Re \sigma^{IJ})\,.
\end{equation}
This is proved by repeating verbatim the arguments in the proof of Theorem \ref{thm:linearwave}\eqref{it:propagation}: the only difference is that we multiply the equation for $\Box_g(A u_\e^1)$ with $X(\overline{Au_\e^2})$ and the one for $\Box_g(A u_\e^2)$ with $X(Au_\e^1)$, c.f.\ \eqref{eq:energyidentitiesA}. It follows that $\nu^{IJ}$ satisfies the equation
\begin{equation}
\label{eq:auxpropagationprop}
\int_{S^*\mc M} \lp[g^{\alpha\beta}\xi_\alpha\p_{x^\beta}(\tilde{a}\,\mf g_{IJ} )-\frac12\p_{x^\gamma}g^{\alpha\beta}\xi_\alpha\xi_\beta(\p_{\xi_\gamma}\tilde{a}) \mf g_{IJ}\rp] \d\nu^{IJ} =
- \int_{S^*\mc M} \tilde{a}\, \mf g_{IJ} \tp{\,d}(\Re \sigma^{IJ})\,,
\end{equation}
which is obtained from \eqref{eq:propagationpropsec5} by replacing $a$ with $a\, \mf g_{IJ}$.
Setting $\sigma\equiv \mf g_{IJ} \sigma^{IJ}$, we have
\begin{equation*}
\begin{split}
\int_{S^*\mc M} \lp[g^{\alpha\beta}\xi_\alpha\p_{x^\beta}\tilde{a}
- \frac12\p_{x^\gamma}g^{\alpha\beta}\xi_\alpha\xi_\beta\p_{\xi_\gamma}\tilde{a}\rp] \d\nu 
= -\langle g^{\a\beta}\xi_\a  \p_{x^\beta} \mf g_{IJ}\, \nu^{IJ},\tilde{a}\rangle
- \langle \Re \sigma,\tilde{a}\rangle\,.
\end{split}
\end{equation*}
Here, repeating the arguments in Section~\ref{sec:oscillating-wave-elementary-reductions}, by the parity of the measures involved, it is clear that we need only consider $\tilde{a}(x,\xi)$ to be odd in $\xi$, or equivalently, to let $A$ in Lemmas~\ref{lemma:rhssection4} and \ref{lemma:contributionsRHSwavemap} satisfy \eqref{eq:hypsymbol}. Then, Lemma~\ref{lemma:rhssection4} shows that no contribution to $\Re\sigma$ is made by the metric oscillations, $H_\e$. The contributions from $Q_\e$ are non-trivial, as shown in the last part of  Lemma \ref{lemma:contributionsRHSwavemap}. Since the second term in the right-hand side of \eqref{eq:contributionHepsilon} is imaginary, it follows from \eqref{eq:threemeasures}  that
$$\langle \Re \sigma,a\rangle = -\langle g^{\a\beta}\xi_\a  \p_{x^\beta} \mf g_{IJ}\, \nu^{IJ},a\rangle + \langle\Re \lambda,a\rangle\,.$$
Combining the previous two computations yields the result.
\end{proof}

\subsection{Proof of Theorem~\ref{thm:luk-huneau}}
\label{sec:proof-Burnett}

Theorem \ref{thm:luk-huneau} follows as a simple application of Theorem \ref{thm:wave-map}:

\begin{proof}[Proof of Theorem~\ref{thm:luk-huneau}] Setting $N=2$,  and labeling $u^1=\psi, u^2=\omega$, \eqref{eq:wave-maps-from-Einstein} is a wave map system from $\mc M\times \R$ into the Poincaré plane, equipped with metric $\mathfrak{g}$ and with Christoffel symbols $\Gamma_{JK}^I$ as follows:
\begin{align*} 
\mathfrak{g}=2\d\psi\otimes \d\psi +\frac12 e^{-4\psi}\d\omega\otimes \d\omega\,, \qquad
\Gamma_{JK}^1=\begin{dcases}
\frac{1}{2}e^{-4\psi}, &J=K=2,\\
0, & \text{otherwise},
\end{dcases}
 \qquad \Gamma_{JK}^2 = \begin{dcases}
-2, & J\neq K,\\
0, & \text{otherwise}.
\end{dcases}
\end{align*}
Following the notation of Theorem~\ref{thm:wave-map}, we set $\mathbb{L}_{\alpha\beta}[\psi,\omega]\equiv 2\p_\alpha\psi \p_\beta\psi+\frac14 e^{-4\psi}\p_\alpha \omega\p_\beta\omega$.

Let $\tilde{\nu}^{II}$ and $\tilde{\lambda}^{II}$ be as in Theorem \ref{thm:wave-map}. Further introduce the Radon measure
$$\tilde{\mu}\equiv 
2\begin{bmatrix}
\tilde \nu^{11} &  \tilde{\lambda}^{11} \\ 
(\tilde\lambda^{11})^* &  \star
\end{bmatrix}
+\frac12e^{-4\psi}
\begin{bmatrix} \tilde \nu^{22} &  \tilde \lambda^{22} 
\\ (\tilde \lambda^{22})^* &  \star
\end{bmatrix}
\equiv 
\begin{bmatrix}
 \tilde \nu &  \tilde\lambda \\ \tilde\lambda^* &  \star
\end{bmatrix}\,. $$
By Theorem~\ref{thm:wave-map}, we have $\tilde \nu_{\alpha\beta}=\xi_\alpha\xi_\beta\nu$ and $\tilde{\lambda}_\gamma = \xi_\gamma \lambda$, where
\begin{align*}
\nu\equiv 2\nu^1+\frac12e^{-4\psi}\nu^2\,,\qquad 
\lambda\equiv 2\lambda^1+\frac12e^{-4\psi}\lambda^2
\end{align*}
satisfy the localization and propagation properties stated as Theorem~\ref{thm:luk-huneau}(\ref{it:thm-cor-luk-huneau-Vlasov}), respectively.

Let us now turn to  the proof of Theorem~\ref{thm:luk-huneau}(\ref{it:thm-cor-luk-huneau-T}). In Section~\ref{sec:quasilinear}, we have already justified that $\mathbf{Ric}(\bm{g})_{\a3}=\mathbf{Ric}(\bm{g})_{33}=0$. For the $(\alpha,\beta)$ direction, the result follows from \eqref{eq:Ricci-coefficient-alphabeta-simpler} and Theorem~\ref{thm:wave-map}\eqref{it:thm-wave-map-limit-equation}.
\end{proof}

\appendix
\section{On elliptic gauge conditions}
\label{sec:appendix}

In this appendix we collect some standard facts concerning elliptic gauge. In fact, the results that we require hold in the more general setting of three-dimensional spacetimes allowing a constant mean curvature spacelike folliation, see Definition \ref{def:CMC} below.

Let $\mc{M}\subseteq \mathbb{R}^{1+n}$ be a smooth manifold, covered by global coordinates $(t\equiv x^0,x^1,\dots,x^n)$. As before, we take greek indices to range in  $\{0,1,\dots, n\}$ and roman indices to range in $\{1,\dots,n\}$, and assume the Einstein summation convention. Let $\mc{M}$ be equipped with a Lorentzian metric $g=g_{\alpha\beta}\d x^\alpha \d x^\beta$, with inverse $g^{-1}=g^{\alpha\beta}\p_{x^\alpha} \p_{x^\beta}$. It will be convenient to consider the Cauchy frame defined in \eqref{eq:Cauchyframe}:
\begin{align*}
\beta^i\equiv -\frac{g^{0i}}{g^{00}}\,, \qquad e_0\equiv \p_0-\beta^i\p_i\,, \qquad \tilde{g}^{ij}\equiv g^{ij}-\frac{g^{0i}g^{0j}}{g^{00}}\,.
\end{align*}
Define $N$ via $g^{00}=-N^{-2}$ and $\tilde{g}_{ij}$ as the inverse of the Riemannian metric $\tilde{g}^{ij}$. Then, $g$ may be written as 
\begin{align} \label{eq:elliptic-gauge-metric}
g=-N^2(\d x^0)^2+\tilde{g}_{ij}(\d x^i+\beta^i\d x^0)(\d x^j+\beta^j \d x^0)\,.
\end{align}
Note that $g_{ij}=\tilde{g}_{ij}$. The second fundamental form associated to constant $x^0$ hypersurfaces is
\begin{align*}
K_{ij}\equiv -\frac{1}{2N} \mc{L}_{e_0} \tilde{g}_{ij}=-\frac{1}{2N}\lp[e_0\tilde{g}_{ij} -\p_j \beta^k\tilde{g}_{ki}-\p_i \beta^k\tilde{g}_{kj}\rp]=H_{ij}+\frac{1}{n}\tilde{g}_{ij} \tau\,,
\end{align*}
where $\tau$ and $H_{ij}$ are, respectively, the trace and the traceless part of $K_{ij}$.  We easily compute: 
\begin{align}
\tau &\equiv\tilde{g}^{ij}K_{ij}= -\frac{1}{2N}\lp[\tilde{g}^{ij}e_0\tilde{g}_{ij} -2\p_k \beta^k\rp]\,,\label{eq:elliptic-gauge-tau}\\
H_{ij} &\equiv K_{ij}-\frac{1}{n}\tilde{g}_{ij}\tau =\frac{1}{2N}\lp[\p_j \beta^k\tilde{g}_{ki}+\p_i \beta^k\tilde{g}_{kj}-\frac{2}{n}\p_k\beta^k \tilde{g}_{ij}\rp]\label{eq:elliptic-gauge-H}\,.
\end{align}

Let $R_{\alpha\beta}$ denote the Ricci tensor components of $g$. We denote by $\tilde{D}^i$ and $\tilde{R}_{ij}$ the covariant derivative and Ricci tensor components, respectively, of the Riemannian metric $\tilde{g}_{ij}$. We have, see \cite[Chapter VI.3]{Choquet-Bruhat2008},
\begin{numcases}{}
\tilde{D}^j \p_j \beta_i= -2R_{0i}+2\tilde{D}^k N H_{ik}-\tilde{D}_k\p_i\beta^k+\frac{2}{n}\tilde{D}_i\p_k \beta^k +2N(1-1/n)\p_i \tau\,, \label{eq:elliptic-gauge-laplace-beta}\\
\triangle_{\tilde{g}}N = \frac{R_{00}}{N}+N|H|^2-e_0\tau +N\frac{\tau^2}{n}\,, \label{eq:elliptic-gauge-laplace-N}\\
\tilde{g}^{ij}\tilde{R}_{ij}= 2N^{-2}\lp(R_{00}-\frac12 g_{00} g^{\alpha\beta}R_{\alpha\beta}\rp) +|H|^2+\tau^2(1-1/n)\,, \label{eq:elliptic-gauge-laplace-gamma}
\end{numcases}
with indices raised and lowered through $\tilde{g}^{ij}$ and $\tilde{g}_{ij}$, respectively. Furthermore,
\begin{equation}
R_{ij}=\tilde{R}_{ij} -\frac{1}{N}\lp(\tilde{D}_i\p_jN+ e_0H_{ij}-\p_j\beta^kH_{ik}-\p_i\beta^kH_{jk}\rp)-2H_{il}H_j^l +\lp(K_{ij}-\frac{2}{n}H_{ij}\rp)\tau-\frac{\tilde{g}_{ij}e_0\tau}{nN}\,.
\label{eq:elliptic-gauge-Rij}
\end{equation}

\begin{definition}\label{def:CMC}
 We say $g$ of the form \eqref{eq:elliptic-gauge-metric}  
\begin{enumerate}
\item is \textit{spatially conformally flat} if the metric induced on constant $x^0$ hypersurfaces, $\tilde{g}_{ij}$,  satisfies  $\tilde{g}_{ij}=e^{2\gamma}\delta_{ij}$ for some conformal factor $\gamma$ defined on $\mc{M}$.
\item has a \textit{constant mean curvature} spacelike folliation if $\tau$ is constant on each $\{x^0=\tp{constant}\}$ slice, and we let the constant be either $\tau=x^0$ or $\tau=0$. In the latter case, we say that the folliation is \textit{maximal}.
\end{enumerate}
\end{definition}

\begin{remark}[Elliptic gauge] For $n=2$, Ricci-flat manifolds are locally conformally flat. Hence, the condition that $g$ (globally) takes the form \eqref{eq:elliptic-gauge-metric}  and that $\tilde{g}^{ij}$  is (globally) conformally flat is sometimes referred to as an \textit{elliptic gauge} condition for Ricci-flat $g$ in $1+2$ dimensions, see e.g.\ \cite{Huneau2019}.  
That $g$ have a constant mean curvature spacelike folliation is also sometimes referred to as the \textit{CMC gauge condition}, though, strictly speaking, it is a \textit{geometric condition} which is propagated by the vacuum Einstein equations in evolution \cite{Andersson2003}, see also \cite[Footnote 3]{Huneau2018a}.
\end{remark}

The following lemma justifies Hypotheses~\ref{hyp:main}\eqref{it:hypmetrics-Einstein} as well as Lemma \ref{lemma:ricconv}:
\begin{lemma} \label{lemma:appendix} Let $(g_\e)_\e$ be a sequence of metrics on $\mc{M}$ of the form \eqref{eq:elliptic-gauge-metric} which are spatially conformally flat with a {\normalfont fixed} constant mean curvature time folliation. Suppose, further, that they satisfy
\begin{itemize}
\item $g_\e\to g$ in $C^0_\tp{loc}$, for some Lorentzian metric $g$ which is also spatially conformally flat with the same constant mean curvature time folliation, and that $g_\e$ is uniformly bounded in $W^{1,2}_{\tp{loc}}$ entrywise;
\item the Ricci tensor of $g_\e$, $\mr{Ric}(g_\e)$, is uniformly bounded in $L^{2}_{\tp{loc}}$ entrywise.
\end{itemize}
Then, if $n= 2$, we also have
\begin{itemize}
\item $\delta^{ij} \p_{ij}^2 g^{\alpha\beta}_\e$ are bounded in $L^2_{\tp{loc}}$, and $e_0 \tilde{g}^{ij}_\e \to  e_0 \tilde{g}^{ij}$ strongly in $L^4_{\tp{loc}}$;
\item $\mr{Ric}(g_\e) \wstar \mr{Ric}(g)$ in the sense of distributions.
\end{itemize}
\end{lemma}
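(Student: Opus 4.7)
The plan is to leverage the spatial conformal flatness and CMC conditions to recast \eqref{eq:elliptic-gauge-laplace-beta}--\eqref{eq:elliptic-gauge-laplace-gamma} as a semilinear elliptic system on each 2D spatial slice, invoke standard elliptic regularity to upgrade $W^{1,2}_\tp{loc}$-control on $g_\e$ into a uniform $L^2_\tp{loc}$-bound for spatial Laplacians of the metric components, and then pass to the limit in the nonlinear terms via Gagliardo--Nirenberg and Aubin--Lions. Writing $\tilde g_{ij,\e}=e^{2\gamma_\e}\delta_{ij}$, spatial conformal flatness supplies $\triangle_{\tilde g_\e}=e^{-2\gamma_\e}\delta^{ij}\p_{ij}$ and $\tilde g^{ij}_\e\tilde R_{ij,\e}=-2e^{-2\gamma_\e}\delta^{ij}\p_{ij}\gamma_\e$, while the CMC condition kills $\p_i\tau$ and makes $e_0\tau$ a prescribed function of $x^0$. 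Accordingly, each of \eqref{eq:elliptic-gauge-laplace-beta}--\eqref{eq:elliptic-gauge-laplace-gamma} takes the form
\begin{align*}
\delta^{ij}\p_{ij}\varphi_\e=F_\e[\mr{Ric}(g_\e),\,\p g_\e,\,g_\e],\qquad \varphi_\e\in\{\gamma_\e,\,N_\e,\,\beta_\e^k\},
\end{align*}
with $F_\e$ linear in $\mr{Ric}(g_\e)\in L^2_\tp{loc}$ and of quadratic nature in $\p g_\e$ (through terms such as $|H_\e|^2$).

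The hard part will be closing a bootstrap showing that $F_\e\in L^2_\tp{loc}$ uniformly. While the Ricci part is controlled by hypothesis, the quadratic part $(\p g_\e)^2$ is a priori only in $L^1_\tp{loc}$, so a direct $L^2$-estimate fails. I would exploit the critical 2D Sobolev embedding $W^{1,2}(\R^2)\hookrightarrow L^p(\R^2)$ for every $p<\infty$ on spatial slices: once $\delta^{ij}\p_{ij}g_\e$ is provisionally in $L^2_\tp{loc}$, it follows that $\p g_\e\in L^p_\tp{loc,x}$ on slices for all $p<\infty$, making $(\p g_\e)^2\in L^2_\tp{loc}$ spacetime after Cauchy--Schwarz in $t$. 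This closes the elliptic $L^2$-estimate self-consistently (made rigorous via mollification or a continuity argument), producing the claimed uniform bound $\delta^{ij}\p_{ij}g^{\a\beta}_\e\in L^2_\tp{loc}$.

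With this bound in hand, $g_\e$ is uniformly bounded in $L^2_t W^{2,2}_\tp{loc,x}$ and in $W^{1,2}_\tp{loc}$. Aubin--Lions (using the compact embedding $W^{2,2}_\tp{loc,x}\hookrightarrow W^{1,2}_\tp{loc,x}$ in 2D together with the $L^2_\tp{loc}$-bound on $\p_0 g_\e$) yields, up to a subsequence, strong $L^2_\tp{loc}$ convergence of $\p g_\e$. Applying slicewise the Gagliardo--Nirenberg inequality $\|f\|_{L^4_x}^2\lesssim \|f\|_{L^2_x}\|\n f\|_{L^2_x}$ to $f=\p g_\e-\p g$ and integrating in $t$ upgrades this to strong $L^4_\tp{loc}$ spacetime convergence. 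The convergence $e_0\tilde g^{ij}_\e\to e_0\tilde g^{ij}$ in $L^4_\tp{loc}$ then follows from the identity
\begin{align*}
e_0\tilde g^{ij}_\e=-\tfrac{2}{n}\lp(\p_k\beta^k_\e-N_\e\tau\rp)\tilde g^{ij}_\e,
\end{align*}
which I would derive by substituting \eqref{eq:elliptic-gauge-H} into the definition of $K_{ij,\e}$ and using spatial conformal flatness, since $N_\e$ and $\tilde g^{ij}_\e$ converge in $C^0_\tp{loc}$, $\tau$ is fixed, and $\p_k\beta^k_\e$ converges in $L^4_\tp{loc}$.

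Finally, for the distributional convergence $\mr{Ric}(g_\e)\wstar\mr{Ric}(g)$, I would decompose each component of $\mr{Ric}(g_\e)$ using \eqref{eq:elliptic-gauge-laplace-beta}--\eqref{eq:elliptic-gauge-Rij} into a sum of terms linear in second derivatives of $g_\e$ and terms quadratic in first derivatives. The linear parts pass to the limit distributionally via weak $L^2_\tp{loc}$ convergence of the spatial Laplacians (from the first step) and distributional convergence of $\p_0$-derivatives. The quadratic parts involve products of sequences converging strongly in $L^4_\tp{loc}$, and so converge in $L^2_\tp{loc}$ to the products of the limits, completing the proof.
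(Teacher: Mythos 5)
Your overall strategy coincides with the paper's: bound the spatial Laplacians of $N_\e,\beta_\e,\gamma_\e$ through the elliptic equations \eqref{eq:elliptic-gauge-laplace-beta}--\eqref{eq:elliptic-gauge-laplace-gamma} (using conformal flatness, the CMC condition and $n=2$ to remove second-order terms from the right-hand sides), upgrade to strong $L^4_\tp{loc}$ convergence of spatial first derivatives, recover $e_0\tilde g^{ij}_\e$ from the trace identity \eqref{eq:elliptic-gauge-tau} (your formula $e_0\tilde g^{ij}_\e=-\frac2n(\p_k\beta^k_\e-N_\e\tau)\tilde g^{ij}_\e$ is correct), and pass to the limit in $\mr{Ric}(g_\e)$ term by term. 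The first genuine gap is the bootstrap for the quadratic terms: by Gagliardo--Nirenberg on slices one only gets $\lVert(\p g_\e)^2\rVert_{L^2_x}\lesssim\lVert\p g_\e\rVert_{L^2_x}\lVert\p^2 g_\e\rVert_{L^2_x}+\lVert\p g_\e\rVert_{L^2_x}^2$, so the putative self-improving estimate has the form $X_\e\lesssim 1+CX_\e$ with $C$ bounded but not small, and neither mollification nor a continuity argument closes it without smallness. You are right that the bare $W^{1,2}_\tp{loc}$ bound in the statement makes the quadratic terms only $L^1_\tp{loc}$, but the intended application (Hypotheses \ref{hyp:Einstein}) supplies a $W^{1,\infty}_\tp{loc}$ bound, under which the right-hand sides of \eqref{eq:elliptic-gauge-laplace-beta}--\eqref{eq:elliptic-gauge-laplace-gamma} are manifestly bounded in $L^2_\tp{loc}$ and no bootstrap is needed; your proposed fix is not a valid substitute.

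Two further steps are glossed over. The upgrade from strong $L^2_\tp{loc}$ to strong $L^4_\tp{loc}$ convergence via slicewise $\lVert f\rVert_{L^4_x}^2\lesssim\lVert f\rVert_{L^2_x}\lVert\nabla f\rVert_{L^2_x}$ does not survive integration in $t$: both factors are controlled only in $L^2_t$, and $\int\lVert f\rVert_{L^2_x}^2\lVert\nabla f\rVert_{L^2_x}^2\,\d t$ need not vanish when $\int\lVert f\rVert_{L^2_x}^2\,\d t\to0$ while $\int\lVert\nabla f\rVert_{L^2_x}^2\,\d t$ is merely bounded. The paper instead interpolates against $\lVert g_\e-g\rVert_{L^\infty}$, which tends to zero uniformly by the $C^0_\tp{loc}$ hypothesis (this also makes Aubin--Lions unnecessary). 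Finally, in the limit of $R_{ij}$ the term $N^{-1}e_0H_{ij}$ in \eqref{eq:elliptic-gauge-Rij} fits neither of your two categories: it is a mixed time--space second derivative of $\beta_\e$, not controlled by the spatial Laplacian bounds, multiplied by a non-constant coefficient, so "distributional convergence of $\p_0$-derivatives" alone does not let you pass to the limit in the product. The paper handles this by reading \eqref{eq:elliptic-gauge-Rij} backwards to deduce that $e_0H_{ij,\e}$ is bounded in $L^2_\tp{loc}$, so that $H_{ij,\e}$ is bounded in $W^{1,2}_\tp{loc}$ and the term becomes a strong-times-weak product; you need to add this step or an equivalent integration by parts.
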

\begin{proof}  First notice that equations \eqref{eq:elliptic-gauge-laplace-gamma} and \eqref{eq:elliptic-gauge-Rij} take on a more familiar form in the spatially conformally flat case: letting $\tilde{g}_{ij}=e^{2\gamma}\delta_{ij}$ for some function $\gamma$, we have
\begin{align}
\begin{split}
-\tilde{R}_{ij}&= \delta_{ij}\triangle \gamma +(n-2)\p_{ij}\gamma +(n-2)[\delta_{ij}\delta^{kl}\p_k\gamma\p_l\gamma-\p_i\gamma\p_j\gamma]\,,\\
-e^{2\gamma}\tilde{g}^{ij}\tilde{R}_{ij}&= 2(n-1)\triangle \gamma +(n-2)(n-1)\delta^{ij}\p_i\gamma\p_j\gamma\,.
\end{split} \label{eq:elliptic-gauge-conformally-flat}
\end{align}

From the assumptions and from \eqref{eq:elliptic-gauge-H}, it is clear that the right hand side of (\ref{eq:elliptic-gauge-laplace-beta})--(\ref{eq:elliptic-gauge-laplace-gamma}) is bounded. For the metric component $N$, it is then immediate to see that its spatial laplacian is bounded in $L^2_\tp{loc}$. To draw the same conclusion for $\tilde{g}_{ij}$, we require the conformal flatness hypothesis, as per \eqref{eq:elliptic-gauge-conformally-flat}. Finally, for $\beta$, the same conclusion holds if $n=2$, since then the right hand side of \eqref{eq:elliptic-gauge-laplace-beta} has no second order terms:
\begin{align*}
-\tilde{D}_k\p_i\beta^k+\tilde{D}_i\p_k \beta^k  =\p^{k}\gamma\p_k\beta_i-\p_i\gamma\p^k\beta_k\,. 
\end{align*}

As the spatial laplacians of $N,\beta,\gamma$ are bounded in $L^2_\tp{loc}$, we conclude that all second spatial derivatives of the metric components are bounded in $L^2_\tp{loc}$: for any sufficiently regular function $f$ and any constant $c$,
\begin{align}\label{eq:elliptic-gauge-intermediate}
\begin{split}
\sup_{i,j} \lVert \p_{ij}^2f \rVert_{L^2_\tp{loc}(\{x^0=c\})}\lesssim  
\lVert \delta^{ij}\p_{ij}^2f \rVert_{L^2_\tp{loc}(\{x^0=c\})}
+\lVert f \rVert_{L^2_\tp{loc}(\{x^0=c\})}
 \,\\
 \implies  
\sup_{i,j} \lVert \p_{ij}^2f \rVert_{L^2_\tp{loc}}\lesssim  \lVert \delta^{ij}\p_{ij}^2f \rVert_{L^2_\tp{loc}}+\lVert f \rVert_{L^2_\tp{loc}}\,,
\end{split}
\end{align}
after integration in $x^0$. Similarly, a Gagliardo--Nirenberg interpolation inequality for fixed $x^0$ followed by integration in $x^0$ shows that $\p_j g^{\alpha\beta}_\e\to \p_j g^{\alpha\beta}$ strongly in $L^4_{\tp{loc}}$. Then, from \eqref{eq:elliptic-gauge-tau}, if $\tau\in W^{1,\infty}_\tp{loc}$ does not oscillate, then $e_0\tilde{g}^{ij}_\e$ converges strongly in $L^{4}_\tp{loc}$.

Finally, to show that $\mr{Ric}(g_\e)$  converges to $\mr{Ric}(g)$ in the sense of distributions, we must check that each nonlinear term in equations \eqref{eq:elliptic-gauge-laplace-beta}, \eqref{eq:elliptic-gauge-laplace-N} and \eqref{eq:elliptic-gauge-Rij} is the product of a strongly converging term in $L^2_\tp{loc}$ by a weakly converging term in $L^2_\tp{loc}$. Considering the previous remarks, this is certainly the case if we can show that $H_{ij}$ is in fact bounded in $W^{1,2}_{\tp{loc}}$. For the  $L^4_{\tp{loc}}$ boundedness of spatial derivatives, it suffices to differentiate \eqref{eq:elliptic-gauge-H} in a spatial direction and use \eqref{eq:elliptic-gauge-intermediate} with $f_\e=\beta^k_\e$. For $e_0H^{ij}_\e$, we appeal to \eqref{eq:elliptic-gauge-Rij}, noting that all other terms in this equation are bounded in $L^2_\tp{loc}$.
\end{proof}

\bibliographystyle{abbrv-andre}
{\small \bibliography{../../../References/Mendeley/library}}

\end{document}